\documentclass[letterpaper, 12pt]{amsart}

\usepackage{amsmath,amssymb,amsxtra,amsthm,hyperref}
\usepackage{tikz}
\usepackage{multicol}

\usepackage{enumitem} 
\usepackage{tensor} 
\usepackage{tikz-cd} 
\usepackage{upref}  
\usepackage{mathrsfs} 
\usepackage{chngcntr}
\usepackage[normalem]{ulem} 

\newcommand{\condref}[1] {\hyperref[cond.#1]{(#1)}}

\newcommand{\cA}{\mathcal{A}}
\newcommand{\cB}{\mathscr{B}}
\newcommand{\cC}{\mathscr{C}}

\newcommand{\cG}{\mathcal{G}}
\newcommand{\cH}{\mathcal{H}}

\newcommand{\cK}{\mathcal{K}}
\newcommand{\cL}{\mathcal{L}}
\newcommand{\cM}{\mathcal{M}}

\newcommand{\cU}{\mathcal{U}}

\newcommand{\ZS}{Zappa-Sz\'{e}p }
\newcommand{\etale}{\'{e}tale }

\newtheorem{theorem}{Theorem}[section]

\newtheorem{corollary}[theorem]{Corollary}
\newtheorem{proposition}[theorem]{Proposition}   
 
\newtheorem{lemma}[theorem]{Lemma}

\theoremstyle{definition}
\newtheorem{definition}[theorem]{Definition}

\newtheorem{example}[theorem]{Example}
\newtheorem{remark}[theorem]{Remark}

\numberwithin{equation}{section}

\frenchspacing

\textwidth=13.5cm
\textheight=23cm
\parindent=16pt
\topmargin=-0.5cm

\newcommand{\norm}[1]{\left\| #1 \right\|}
\newcommand{\abs}[1]{\left| #1 \right|}
\newcommand{\inner}[2]{\left\langle #1 \mid #2 \right\rangle}

\newcommand{\Biginner}[2]{\Big\langle #1 \Bigm\vert #2 \Big\rangle}
\newcommand{\inv}{^{-1}}
\newcommand{\z}{^{(0)}}
\DeclareMathOperator{\supp}{supp}
\DeclareMathOperator{\End}{End}
\DeclareMathOperator{\Iso}{Iso}

\let\mscr\mathscr

\let\mc\mathcal
\let\mf\mathfrak


\begin{document}

\title{The Zappa-Sz\'{e}p product of a Fell bundle and a groupoid}

\author{Anna Duwenig}
\address{School of Mathematics and Applied Statistics, University of Wollongong, Wollongong, Australia}
\email{aduwenig@uow.edu.au}

\author{Boyu Li}
\address{Department of Mathematics and Statistics, University of Victoria, Victoria, B.C., Canada}
\email{boyuli@uvic.ca}
\date{\today}

\thanks{The second author was supported by a postdoctoral fellowship of Pacific Institute for the Mathematical Sciences.}

\subjclass[2010]{46L55, 46L05, 22A22}
\keywords{Zappa-Sz\'{e}p product, Fell Bundle, $C^*$-algebra, $C^*$-blend}

\begin{abstract} We define the Zappa-Sz\'{e}p product of a Fell bundle by a groupoid, which turns out to be a Fell bundle over the Zappa-Sz\'{e}p product of the underlying groupoids. Under certain assumptions, every Fell bundle over the Zappa-Sz\'{e}p product of groupoids arises in this manner. We then study the representation associated with the Zappa-Sz\'{e}p product Fell bundle and show its relation to covariant representations. Finally, we study the associated universal $C^*$-algebra, which turns out to be a $C^*$-blend, generalizing an earlier result about the Zappa-Sz\'{e}p product of groupoid $C^*$-algebras. In the case of discrete groups, the universal $C^*$-algebra of a Fell bundle embeds injectively inside the universal $C^*$-algebra of the Zappa-Sz\'{e}p product Fell bundle. 
\end{abstract}

\maketitle

\section{Introduction}

The \ZS product originated as a generalization of the semi-direct product of groups. For groups $G$ and $H$, in addition to encoding an $H$-action on $G$ in the semi-direct product, the \ZS product encodes a $G$-restriction map on $H$. This results in a two-way interaction between $G$ and $H$ in their \ZS product. 

The crossed product construction, in the realm of operator algebras, closely resembles that of a semi-direct product: given a $C^*$-algebra $A$ and a group $H$ acting on ${A}$ by automorphisms, one can define the algebraic crossed product ${A}\rtimes^{\operatorname{alg}}_\alpha H$ in a similar fashion to the semi-direct product. There are various ways to take the closure of the algebraic crossed product that could result in different $C^*$-algebras that have drawn much research interest. 

Naturally, one may wonder if we can similarly construct a \ZS product of a $C^*$-algebra. This is the main motivation behind this paper. To define a \ZS product of a $C^*$-algebra ${A}$, one must encode both an $H$-action on the $C^*$-algebra and an ${A}$-restriction on the group $H$. One possible approach is to put a $G$-grading on ${A}$, by dissembling ${A}$ into linearly independent subspaces $\{\cA_x\}_{x\in G}$ whose direct sum is dense in ${A}$ (for example, the notion of a graded $C^*$-algebra considered by Exel \cite{Exel1997}). With this approach, the elements in $\cA_x$ induce a restriction map on $H$ in a similar manner as the $G$-restriction map in a \ZS product. Algebraically, one can define a \ZS type product of the subspaces $\{\cA_x\}_{x\in G}$ and $H$. However, this approach faces a key challenge: it is difficult, sometimes even impossible to reassemble the individual pieces back to a $C^*$-algebra, and even if that is possible, the resulting $C^*$-algebra might have lost much of the information of the original $C^*$-algebra ${A}$. 

To avoid these difficulties, instead of decomposing a $C^*$-algebra ${A}$, we start directly with a collection of fibres $\{\cA_x\}$ that can be reasonably assembled into different $C^*$-algebras.  The approach to study the fibres instead of the $C^*$-algebra as a whole leads us to the notion of {{\em Fell bundles}.} 

Suppose $\cG$ and $\cH$ are \etale groupoids that have a \ZS product groupoid $\cG\bowtie \cH$,  which is known to be \etale as well \cite{BPRRW2017}. Given a Fell bundle $\cB$ over $\cG$, we first define the notion of a {\em $(\cG,\cH)$-compatible $\cH$-action} $\beta$ on the Fell bundle (Definition \ref{df.beta}) that allows us to construct a Zappa-Sz\'{e}p-type product $\cB\bowtie_\beta \cH$. This product turns out to be a Fell bundle over the \ZS product of the underlying groupoids $\cG\bowtie\cH$ (Theorem \ref{thm.bundle}). Conversely, certain Fell bundles over $\cG\bowtie \cH$ arise in this manner (Theorem \ref{thm.internalZS}).

We then study representations of the \ZS product Fell bundle. We define (Definition \ref{def:cov-rep}) the notion of {\em covariant representations} of the system $(\cB,\beta)$, in parallel to the classical notion of covariant representations of a $C^*$-dynamical system. We prove an integration theorem that every covariant representation gives rise to an $I$-norm decreasing $*$-representation of $\Gamma_c(\cG\bowtie\cH; \cB\bowtie_\beta \cH)$ (Theorem \ref{thm:representations-of-ZS-Fellbdl}). Conversely, under the assumption that $\cB$ is unital, we also prove a disintegration theorem that every nondegenerate $I$-norm decreasing $*$-representation of $\Gamma_c(\cG\bowtie\cH; \cB\bowtie_\beta \cH)$ is equivalent to the integrated form of some covariant representation (Theorem \ref{thm.disintegration}). 

Finally, we study the universal $C^*$-algebra associated with the \ZS product Fell bundle. We first prove that the $C^*$-algebra of the \ZS product Fell bundle $C^*(\cB\bowtie_\beta \cH)$ is a $C^*$-blend of $C^*(\cB)$ and the groupoid $C^*$-algebra $C^*(\cH)$ when $\cB$ is unital (Theorem \ref{thm.blend}). This generalizes an earlier result \cite{BPRRW2017} that the groupoid $C^*$-algebra of the \ZS product groupoid $C^*(\cG\bowtie \cH)$ is a $C^*$-blend of $C^*(\cG)$ and $C^*(\cH)$. Finally, in the case of discrete groups, we prove that any representation of the Fell bundle $\cB$ induces a covariant representation of the \ZS product Fell bundle (Lemma \ref{lm.M}), in analogy to the classical regular representation of $C^*$-dynamical systems (for example, \cite[Example 2.14]{Wil2007}). As a result, $C^*(\cB)$ embeds injectively inside $C^*(\cB\bowtie_\beta\Lambda)$ for $\Lambda$ a discrete group (Theorem \ref{thm.inj}).

In recent years, the \ZS product has attracted much attention in the study of operator algebras. For example, recent studies include the $C^*$-algebra of self-similar groups \cite{Nekrashevych2009}; the \ZS product of right LCM semigroups \cite{BRRW, Starling2015} and \etale groupoids \cite{BPRRW2017}; self-similar group actions on graphs \cite{EP2017} and on $k$-graphs \cite{LY2019, LY2019b}; self-similar groupoid actions on $k$-graphs \cite{ABRW2019}. Our hope is that this paper brings a new perspective into this line of research. 

\pagebreak[4]\section{Preliminary}

\subsection{Zappa-Sz\'{e}p Product of Groupoids} In group theory, the Zappa-Sz\'{e}p product provides a way to construct a group from certain interactions between two groups. It is a natural generalization of the semi-direct product $G\rtimes H$ of groups $G$ and $H$, which encodes an $H$-action on the group $G$ by defining the multiplication
\[(x,h)(y,k)=(x(h\cdot y), hk).\]
However, it is possible that $G$ also acts on $H$. This is known as the $G$-restriction map $(x,h)\mapsto h|_x$ in the \ZS product, and the multiplication in the \ZS product encodes this two-way action by setting
\[(x,h)(y,k)=(x(h\cdot y), h|_y k).\]

For other algebraic structures, one can often define an analogous version of their Zappa-Sz\'{e}p product: for example, the case of right LCM semigroups is considered in \cite{BRRW}. For two groupoids $\cG$ and $\cH$, one can define a similar notion of Zappa-Sz\'{e}p product when their unit spaces $\cG\z, \cH\z$ match \cite{AA2005}, which we will recall below. 
One may refer to \cite{PatersonGroupoidBook} for a detailed introduction of groupoids and to \cite{BPRRW2017} for a detailed discussion on their \ZS products. 

Let $\cG$ and $\cH$ be 
groupoids. For $x\in \cG$, define its source by $s_\cG(x)=x^{-1}x$ and its range by $r_\cG(x)=xx^{-1}$; similarly, define $s_{\cH}$ and $r_{\cH}$ for $\cH$. 
 We assume that $\cG$ and $\cH$ have the same unit space $\cG\z=\cH\z$,
 so that the ranges of the maps $s_\cG,s_\cH,r_\cG,r_\cH$ are all the same set. Assume further that there exist a continuous right-action of $\cG$ on $s_{\cH}\colon \cH \to\cG\z$, and a continuous left-action of $\cH$ on $r_{\cG}\colon \cG \to \cH\z$. Following \cite{BPRRW2017}, we denote these by
\begin{enumerate}[leftmargin=3cm]
    \item[$\cH\curvearrowright\cG$:\quad] 
    $\cH \tensor*[_{s_{\cH}}]{\times}{_{r_{\cG}}}\cG \to \cG$, $(h,x)\mapsto h\cdot x$, and 
    \item[$\cH\curvearrowleft\cG$:\quad] $\cH \tensor*[_{s_{\cH}}]{\times}{_{r_{\cG}}}\cG \to\cH$, $(h,x)\mapsto h|_x$,
\end{enumerate}
where we used the following notation for the fibered product:
\begin{equation}\label{eq:fibred-product}
    Y \tensor*[_{q}]{\times}{_{p}} X
    := 
    \bigl\{(y,x)\in Y\times X : q(y)=p(x)\bigr\} 
    \text{ whenever } 
    \begin{tikzcd}[column sep = tiny, row sep = small]
        Y \ar[dr, shorten <= -.5ex, "q"']& & \ar[dl, shorten <= -.5ex, "p"] X
        \\
        & Z & 
    \end{tikzcd}
    .
\end{equation}
We call $\cdot$ the {\em $\cH$-action map} and $\vert$ the {\em $\cG$-restriction map}. Recall the following properties, intrinsic to groupoid actions:
\begin{multicols}{2}
\begin{enumerate}[label=(ZS\arabic*)]
\item\label{cond.ZS1}\label{ZSH-action-associative} $(h_1h_2)\cdot x=h_1\cdot (h_2\cdot x)$,
\item\label{cond.ZS5}\label{ZSH-action-range} $r_\cG(h\cdot x)=r_\cH(h)$,
\item\label{cond.ZS8}\label{ZSH-action-unit} $r_\cG(x)\cdot x=x$,
\item\label{cond.ZS3}\label{ZSG-restriction-associative} $h|_{xy}=(h|_x)|_y$,
\item\label{cond.ZS6}\label{ZSG-restriction-source} $s_\cH(h|_x)=s_\cG(x)$,
\item\label{cond.ZS9}\label{ZSG-restriction-unit} $h|_{s_\cH(h)}=h$.
\end{enumerate}
\end{multicols}
We further assume that these actions
satisfy the following compatibility conditions:
\begin{enumerate}[label=(ZS\arabic*)]\setcounter{enumi}{6}
\item\label{cond.ZS7}\label{ZSaction-restriction-sH=rG} $s_\cG(h\cdot x)=r_\cH(h|_x)$,
\item\label{cond.ZS2}\label{ZSaction-restriction-assoc1} $h\cdot (xy)=(h\cdot x)(h|_x \cdot y)$,
\item\label{cond.ZS4}\label{ZSaction-restriction-assoc2} $(h_1h_2)|_x=h_1|_{h_2\cdot x} h_2|_x$.
\end{enumerate}
\begin{definition}[{\cite[Definition 1.1]{AA2005}}]\label{def:matchedpair}
    A pair of topological groupoids $(\cG,\cH)$ equipped with a continuous $\cH$-action map and continuous $\cG$-restriction map which satisfy \ref{cond.ZS1}--\ref{cond.ZS4} is called a {\em matched pair}.
\end{definition}

The following lemma is taken from \cite[Lemma 4]{BPRRW2017} and is especially useful in later calculations.

\begin{lemma}\label{lm.ZS} For any $h\in \cH$ and $x\in \cG$ with $s_\cH(h)=r_\cG(x)$, we have
\begin{enumerate}[leftmargin=1.5cm, label=\textup{(ZS\arabic*)}]\setcounter{enumi}{9}
\item\label{item:acting-on-unit-is-unit} $h\cdot s_\cH(h)=r_\cH(h)$,
\item\label{item:restriction-of-unit-is-unit} $r_\cG(x)|_x=s_\cG(x)$,
\item $(h\cdot x)^{-1}=h|_x \cdot x^{-1}$,
\item\label{item:inverse-of-restriction} $(h|_x)^{-1}=h^{-1}|_{h\cdot x}$.
\end{enumerate}
\end{lemma}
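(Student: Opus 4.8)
The plan is to derive all four identities from the matched-pair axioms together with three elementary groupoid facts: units $u$ satisfy $r(u)=s(u)=u$ and are exactly the idempotents (the elements $z$ with $z=zz$); every $z$ has $r(z)$ as its left unit and $s(z)$ as its right unit; and groupoids admit left and right cancellation. I would prove the identities in the order \ref{item:acting-on-unit-is-unit}, \ref{item:restriction-of-unit-is-unit}, and then the two inversion formulas, since the latter two rely on the former.

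For \ref{item:acting-on-unit-is-unit}, write $u=s_\cH(h)=r_\cG(x)$, a unit of the common unit space. Substituting $x\mapsto u$ and $y\mapsto u$ into \condref{ZS2}, simplifying the left-hand side with $u\cdot u=u$ (an instance of \condref{ZS8}) and the restriction factor with $h|_u=h$ (which is \condref{ZS9}), produces $h\cdot u=(h\cdot u)(h\cdot u)$. Hence $h\cdot u$ is a unit, and \condref{ZS5} identifies it as $r_\cH(h)$. Identity \ref{item:restriction-of-unit-is-unit} is the mirror image: substituting $h_1\mapsto u$ and $h_2\mapsto u$ into \condref{ZS4} and using $u\cdot x=x$ (again \condref{ZS8}) gives $u|_x=(u|_x)(u|_x)$, so $u|_x$ is a unit, which \condref{ZS6} pins down as $s_\cG(x)$.

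For the third identity $(h\cdot x)\inv=h|_x\cdot x\inv$, I would first note that the right-hand side is defined, since $s_\cH(h|_x)=s_\cG(x)=r_\cG(x\inv)$ by \condref{ZS6}, and then read \condref{ZS2} with $y\mapsto x\inv$ in reverse: the product $(h\cdot x)(h|_x\cdot x\inv)$ collapses to $h\cdot(xx\inv)=h\cdot r_\cG(x)=h\cdot s_\cH(h)$, which equals $r_\cH(h)=r_\cG(h\cdot x)$ by \ref{item:acting-on-unit-is-unit} and \condref{ZS5}. Left cancellation then yields $h|_x\cdot x\inv=(h\cdot x)\inv$. The final identity \ref{item:inverse-of-restriction} is handled symmetrically: $h\inv|_{h\cdot x}$ is defined because $s_\cH(h\inv)=r_\cH(h)=r_\cG(h\cdot x)$ by \condref{ZS5}, and substituting $h_1\mapsto h\inv$, $h_2\mapsto h$ into \condref{ZS4} gives $(h\inv|_{h\cdot x})(h|_x)=(h\inv h)|_x=s_\cH(h)|_x=s_\cG(x)=s_\cH(h|_x)$, using \ref{item:restriction-of-unit-is-unit} and \condref{ZS6}; right cancellation identifies $h\inv|_{h\cdot x}$ as $(h|_x)\inv$.

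The only genuinely delicate aspect is the bookkeeping of domains: each occurrence of the partially defined maps $\cdot$ and $\vert$ must be checked to lie in the fibred product $\cH \tensor*[_{s_{\cH}}]{\times}{_{r_{\cG}}}\cG$, and each cancellation requires confirming that the two sides share the relevant range or source. These verifications are routine consequences of \condref{ZS5}--\condref{ZS7}, so I anticipate no conceptual obstacle; the care lies entirely in ensuring that every product and every action written down is actually composable.
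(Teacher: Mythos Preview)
Your argument is correct and complete: each of the four identities follows from the matched-pair axioms exactly as you outline, and the domain checks you flag are indeed the only places requiring care. The paper itself does not supply a proof of this lemma---it simply quotes the result from \cite[Lemma~4]{BPRRW2017}---so your derivation from the axioms (ZS1)--(ZS9) is more than what the paper provides.
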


We can now define the  \ZS product groupoid as follows.

\begin{definition} Suppose $(\cG,\cH)$ is a matched pair. Define 
\[\cG\bowtie\cH := \cG \tensor*[_{s_{\cG}}]{\times}{_{r_{\cH}}}\cH=\{(x,h): x\in\cG, h\in\cH, s_\cG(x)=r_\cH(h)\},\]
with multiplicable pairs
\[(\cG\bowtie\cH)^{(2)} = \{((x,h),(y,g)): r_\cG(y)=s_\cH(h)\}.\]
Define multiplication
\[(x,h)(y,g)=(x(h\cdot y), h|_y g),\]
and inverse map
\begin{equation}\label{eq:inv-on-ZS-gpd}
(x,h)^{-1}=(h^{-1}\cdot x^{-1}, h^{-1}|_{x^{-1}}).\end{equation}
Then $\cG\bowtie\cH$ is a groupoid \cite{BPRRW2017}, called the {\em \ZS product} of $\cG$ and $\cH$.
\end{definition} 

We point out that the set of idempotents $(\cG\bowtie\cH)\z$ of $\cG\bowtie\cH$ can be identified with $\cG\z=\cH\z$ since one can prove
\begin{align}\label{eq:s-and-r-on-SZ}
    s_{\cG\bowtie\cH} (x,h)
    &=
    \bigl(
        s_{\cH}(h), s_{\cH}(h)
    \bigr)
    \text{ and }
    r_{\cG\bowtie\cH} (x,h)
    =
    (r_{\cG}(x),r_{\cG}(x)),
\end{align}
using Conditions~\ref{cond.ZS2}, \ref{cond.ZS3}, \ref{cond.ZS6}, and \ref{item:acting-on-unit-is-unit} for the first and \ref{cond.ZS1}, \ref{cond.ZS4}, \ref{cond.ZS8}, and \ref{item:restriction-of-unit-is-unit}  for the second equality.

Any self-similar group action on a groupoid gives rise to a \ZS product as follows.

\begin{example}\label{ex.ssgroupoid} 
     Let $\cG$ be an \etale groupoid, and let $\sigma\colon  H\to\operatorname{Aut}(\cG)$ be an action of a discrete group $H$ on $\cG$. Denote $\sigma(h)(x)$ by $h\ast x$. The action is called {\em self-similar} if there exists another map $H\times \cG \ni (h,x)\mapsto h\bullet x \in H$ that satisfies:
    \begin{enumerate}
        \item For all $v\in\cG\z $, $h\in H$, and $x\in\cG$, we have $h\bullet v=h$ and $e_H \bullet x =e_H$.
        \item For all  $(x,y)\in \cG^{(2)}$
        and $h\in H$, we have $h\bullet (xy)=(h\bullet x)\bullet y$.
        \item For all $(x,y)\in \cG^{(2)}$
        and $h\in H$, we have $h\ast (xy)=(h\ast x)((h\bullet x) \ast y)$.
        \item For all $g,h\in H$ and $x\in \cG$, we have $(gh)\bullet x=(g \bullet (h\ast x)) (h\bullet x)$.
    \end{enumerate}
    The {\em self-similar groupoid} is then defined as $\cG^H=\{(x,h): x\in \cG, h\in H\}$ with multiplication given by $(x,h)(y,k)=(x(h\ast y), (h\bullet y) k)$ whenever $h\ast r(y)=s(x)$, and with inverse $(x,h)^{-1}=(h^{-1}\ast x^{-1}, h^{-1} \bullet x^{-1})$. 

    Let $\cH=\cG\z \rtimes H=\{(u,h): u\in \cG\z , h\in H\}$ be the transformation groupoid of the $H$-action restricted to $\cG\z$; its multiplication is given by $(u,h)(v,k)=(u,hk)$ whenever $v=h^{-1}\ast u$, and its inverses are $(u,h)^{-1}=(h^{-1}\ast u, h^{-1})$. One can verify that $(\cG,\cH)$ is a matched pair of \etale groupoids where the $\cH$-action on $\cG$ is given by $(u,h)\cdot x := h\ast x$ and the $\cG$-restriction on $\cH$ is given by $(u,h)|_x=(h\ast s(x),h \bullet x)$. The resulting \ZS product groupoid $\cG\bowtie \cH$ coincides with the self-similar groupoid $\cG^H$.
    %
\end{example}

Note further that, when $\cG$ and $\cH$ are \etale groupoids, their \ZS product is again \etale. 

\begin{proposition}[see {\cite[Proposition 9]{BPRRW2017}}] When $\cG\bowtie\cH$ is endowed with the relative product topology on $\cG\times \cH$, it is \etale if and only if both $\cG$ and $\cH$ are \'{e}tale, and both the $\cH$-action map and the $\cG$-restriction maps are continuous. 
\end{proposition}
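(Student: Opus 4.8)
The plan is to use the characterization of an étale groupoid as one that is a topological groupoid admitting a basis of open \emph{bisections}, i.e.\ open sets on which both $r$ and $s$ restrict to homeomorphisms onto open subsets of the unit space. Throughout I will identify $(\cG\bowtie\cH)\z$ with $\cG\z=\cH\z$ via \eqref{eq:s-and-r-on-SZ}, under which $r_{\cG\bowtie\cH}(x,h)=r_\cG(x)$ and $s_{\cG\bowtie\cH}(x,h)=s_\cH(h)$.

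For ($\Leftarrow$), assume $\cG,\cH$ are étale and the action maps are continuous. Continuity of the $\cH$-action and $\cG$-restriction maps makes the multiplication $(x,h)(y,g)=(x(h\cdot y),h|_yg)$ and the inversion \eqref{eq:inv-on-ZS-gpd} continuous, so $\cG\bowtie\cH$ is a topological groupoid. Given open bisections $U\subseteq\cG$ and $V\subseteq\cH$, I would set $W:=(U\times V)\cap(\cG\bowtie\cH)$ and show it is an open bisection. The crucial point is that $r_{\cG\bowtie\cH}|_W$ is injective: if two elements of $W$ share a range then their $\cG$-coordinates agree (as $U$ is a bisection), hence their $\cH$-coordinates have equal $r_\cH$ by the fibring condition $s_\cG(x)=r_\cH(h)$, and so agree as well (as $V$ is a bisection). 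Continuity of the local inverse and openness of the image then follow by expressing them through the homeomorphisms $r_\cG|_U$, $s_\cG|_U$, $r_\cH|_V$, $s_\cH|_V$; the source map is handled symmetrically. Since such $W$ form a basis, $\cG\bowtie\cH$ is étale.

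For ($\Rightarrow$), assume $\cG\bowtie\cH$ is étale. I would realize $\cG$ and $\cH$ as open subgroupoids via the injections $\iota_\cG\colon x\mapsto(x,s_\cG(x))$ and $\iota_\cH\colon h\mapsto(r_\cH(h),h)$; using \ref{cond.ZS8} and \ref{item:restriction-of-unit-is-unit} one verifies $\iota_\cG$ is a groupoid embedding that is a homeomorphism onto its image (and symmetrically for $\iota_\cH$). As $\cG\bowtie\cH$ is étale, its unit space is open; pulling back along $\iota_\cG$ and $\iota_\cH$ shows $\cG\z$ and $\cH\z$ are open, whence $\iota_\cG(\cG)=(\cG\bowtie\cH)\cap(\cG\times\cH\z)$ and the analogous $\iota_\cH(\cH)$ are open in $\cG\bowtie\cH$. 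Since an open subgroupoid of an étale groupoid is again étale, $\cG$ and $\cH$ are étale.

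It remains to recover continuity of the action maps, and here the decisive identity is that for $(h,x)$ in the fibred product $\cH \tensor*[_{s_\cH}]{\times}{_{r_\cG}}\cG$ one has $\iota_\cH(h)\,\iota_\cG(x)=(h\cdot x,\,h|_x)$ in $\cG\bowtie\cH$; I would check this using \ref{cond.ZS5}, \ref{cond.ZS6}, \ref{cond.ZS8}, and \ref{cond.ZS7}. Because the multiplication on the étale groupoid $\cG\bowtie\cH$ and the injections $\iota_\cG,\iota_\cH$ are all continuous, the map $(h,x)\mapsto(h\cdot x,h|_x)$ is continuous into $\cG\times\cH$, and composing with the two coordinate projections yields continuity of $(h,x)\mapsto h\cdot x$ and of $(h,x)\mapsto h|_x$. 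I expect the main obstacle to be the bisection argument in ($\Leftarrow$)---recognizing that the fibring condition $s_\cG(x)=r_\cH(h)$ is exactly what rigidifies $W$ into a bisection even though $\cG\bowtie\cH$ is a proper fibred subspace of $\cG\times\cH$---together with establishing the product identity above, which is what lets the groupoid multiplication encode, and thereby transfer continuity to, both action maps at once.
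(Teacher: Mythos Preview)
The paper does not supply its own proof of this proposition; it is quoted from \cite[Proposition~9]{BPRRW2017} and left unproved here. So there is no in-paper argument to compare against. That said, your sketch is correct and follows the standard line one would expect (and essentially the approach of the cited reference): for $(\Leftarrow)$ you verify that products $W=(U\times V)\cap(\cG\bowtie\cH)$ of open bisections are open bisections, the key being that the fibring condition $s_\cG(x)=r_\cH(h)$ forces the second coordinate once the first is pinned down by $r_\cG$ (and symmetrically for $s$); for $(\Rightarrow)$ you realize $\cG$ and $\cH$ as open subgroupoids via $\iota_\cG,\iota_\cH$ and read off continuity of the action maps from the identity $\iota_\cH(h)\,\iota_\cG(x)=(h\cdot x,h|_x)$ together with continuity of multiplication in the \'etale groupoid $\cG\bowtie\cH$. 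The only place worth tightening is the openness of $r_{\cG\bowtie\cH}(W)$ and continuity of the local inverse in $(\Leftarrow)$: writing the inverse explicitly as $u\mapsto\bigl((r_\cG|_U)^{-1}(u),\,(r_\cH|_V)^{-1}(s_\cG((r_\cG|_U)^{-1}(u)))\bigr)$ and noting $r_{\cG\bowtie\cH}(W)=r_\cG\bigl(U\cap s_\cG^{-1}(r_\cH(V))\bigr)$ makes this transparent.
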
 

\subsection{Fell Bundles}

Fell bundles over groups were first introduced and studied by Fell \cite{FellBundleBook}, under the notion of $C^*$-algebraic bundle. It is a powerful device in the study of graded $C^*$-algebras, and many well-known $C^*$-algebras are naturally graded. Instead of studying the graded $C^*$-algebra as a whole, Fell bundles focus on the fibres from the grading and provide a general framework to reassemble fibres back to various graded $C^*$-algebras. Here, we give a brief introduction to Fell Bundles over \etale groupoids. One may refer to \cite{ExelFellBundle} for Fell bundles over discrete groups and its connection with partial dynamical systems; and to \cite{Yamagami1990, Kumjian1998} for a more detailed discussion of Fell bundles over groupoids. 

\begin{definition}[see {\cite[Definition 2.1]{BE2009}}]\label{def.USCBdl}
Suppose $\cG$ is a locally compact 
Hausdorff
\etale groupoid, and $B$ is a topological space together with a continuous, open surjection $p\colon B\to\cG$. 
We call $\cB=(B,p)$ an {\em upper semi-continuous Banach bundle} if its fibres $\cB_{x}:=p^{-1}(x)$ have the structure of complex Banach spaces and if
\begin{enumerate}[leftmargin=2cm,label=(USC\arabic*)]
    \item\label{cond.B-USC} the map $B\to \mathbb{R}_{\geq 0}$, $b\mapsto\norm{b}$, is upper semi-continuous,
    \item\label{cond.B-plus} when
    \[
        B\tensor*[_{p}]{\times}{_{p}} B = \{(a,b)\in B\times B : p(a)=p(b)\}
    \]
    is equipped with the subspace topology, then the map  $B\tensor*[_{p}]{\times}{_{p}} B \to B$, $(a,b)\mapsto a+b$, is continuous, 
    \item\label{cond.B-times} for each $\lambda\in\mathbb{C},$ the map $B\to B,$ $b\mapsto \lambda b$, is continuous and
    \item\label{cond.B-nets} if $(b_i)_i$ is a net in $B$ such that $p(b_i)$ converges to $x\in \cG$ and $\norm{b_i}\to 0$, then $(b_i)_i$ converges to $0\in \cB_{x}$ in $B$.
\end{enumerate}
We note that \ref{cond.B-plus} and \ref{cond.B-times} are to be understood with the Banach space structure of $\cB_{p(b)}$ in mind. By a standard abuse of notation, we will often write $\cB=\{\cB_g\}_{g\in\cG}$ for the bundle, omitting explicit referencing of the topological space $B$ and the map $p$.
\end{definition}
\begin{definition}[see {\cite[Definition 2.8]{BE2009}}]
An upper semi-continuous Banach bundle $\cB=(B,p)$ is called {\em Fell bundle} (or {\em $C^*$-algebraic bundle}) if it comes with continuous maps
\[
    \cdot\colon
     \cB^{(2)}:=
     \left\{
        (a,b)\in B\times B  : (p(a),p(b))\in \cG^{(2)}
     \right\}
    \to
    B
    \text{ and }
    {}^{\ast}\colon B \to B
\]
such that:
\begin{enumerate}[label=\textup{(F\arabic*)}]
\item\label{cond.F1} For each $(x,y)\in \cG^{(2)}$, $\cB_{x}\cdot \cB_y\subset\cB_{xy}$, i.e.\ $p(b\cdot c)=p(b) p(c)$ for all $(b,c)\in  \cB^{(2)}$.
\item\label{cond.F2} The multiplication is bilinear.
\item\label{cond.F3} The multiplication is associative, whenever it is defined.
\item\label{cond.F4} If $(b,c)\in  \cB^{(2)}$, then $\|b\cdot c\|\leq\|b\|\|c\|$, where the norm is the Banach norm of the respective fibre. 
\item\label{cond.F5} For any $x\in \cG$, $\cB_{x}^*\subset \cB_{x^{-1}}$.
\item\label{cond.F6} The involution map $b\mapsto b^*$ is conjugate linear.
\item\label{cond.F7} If $(b,c)\in  \cB^{(2)}$, then $(b\cdot c)^*=c^*\cdot  b^*$. 
\item\label{cond.F8} For any $b\in B$, $b^{**}=b$.
\item\label{cond.F9} For any $b\in B$, $\|b^*\cdot  b\|=\|b\|^2=\|b^*\|^2$. 
\item\label{cond.F10} For any $b\in B$, $b^*\cdot  b\geq 0$ in $\cB_{s_{\cG}(p(b))}$. 
\end{enumerate}
We will often write $bc$ for $b\cdot c$, and $s_{\cB}$ resp.\ $r_{\cB}$ for $ s_{\cG}\circ p$ resp.\ $r_{\cG}\circ p$.
\end{definition}

We note that \condref{F9} makes sense because of
\condref{F5}. Moreover, whenever $x\in \cG\z$ is an idempotent, $\cB_{x}$ is in fact a $C^*$-algebra. Since $b^*b\in \cB_{s(g)}$  for $b\in \cB_g$ by \condref{F1} and \condref{F4}, we can understand the positivity in \condref{F10}
within the $C^*$-algebra $\cB_{s(g)}$. 
\begin{lemma}[cf.\ {\cite[Lemma 3.30]{BMZ2013}}]
    Suppose $\cG\z$ is discrete. A Fell bundle $(B,p)$ in the above sense is automatically continuous, i.e.\ the norm $b\mapsto \norm{b}$ is continuous on $B$.
\end{lemma}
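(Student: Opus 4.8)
The norm is already upper semi-continuous by \condref{B-USC}, so it suffices to upgrade this to continuity, i.e.\ to prove that $\norm{b_i}\to\norm{b_0}$ whenever $(b_i)$ is a net in $B$ converging to some $b_0\in B$. The plan is to transport the question from the fibre $\cB_{p(b_0)}$, which may sit over an arbitrary element of $\cG$, to a fibre over the unit space $\cG\z$, where discreteness can be exploited, and then to translate back using the $C^*$-identity \condref{F9}.

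First I would record the (standard) fact that, on each fibre $\cB_x$, the subspace topology inherited from $B$ coincides with the norm topology. Indeed, if $a_i\to a$ in $B$ with all $a_i$ and $a$ lying in the single fibre $\cB_x$, then continuity of the vector-space operations (\condref{B-plus}, \condref{B-times}) gives $a_i-a\to 0_x$, and applying \condref{B-USC} to this net forces $\norm{a_i-a}\to 0$; the reverse triangle inequality in the Banach space $\cB_x$ then yields $\norm{a_i}\to\norm{a}$. (Conversely, norm convergence within a fibre implies convergence in $B$ by \condref{B-nets} together with \condref{B-plus}, but only the direction just proved is needed below.)

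Now set $x_i := p(b_i)$ and $x_0 := p(b_0)$, so that $x_i\to x_0$ in $\cG$ by continuity of $p$. Applying continuity of the involution and of the multiplication to the net $(b_i)$, I obtain $b_i^*b_i\to b_0^*b_0$ in $B$, where $b_i^*b_i\in\cB_{s_\cG(x_i)}$ and $b_0^*b_0\in\cB_{s_\cG(x_0)}$ by \condref{F1} and \condref{F5}. Here is where discreteness enters: the map $s_\cG\circ p$ is continuous, so $s_\cG(x_i)$ converges to $s_\cG(x_0)$ in the discrete space $\cG\z$ and is therefore eventually equal to $u_0:=s_\cG(x_0)$. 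Hence, for large $i$, both $b_i^*b_i$ and $b_0^*b_0$ lie in the single fibre $\cB_{u_0}$, and the first step applies to give $\norm{b_i^*b_i}\to\norm{b_0^*b_0}$. Finally the $C^*$-identity \condref{F9} converts this into $\norm{b_i}^2=\norm{b_i^*b_i}\to\norm{b_0^*b_0}=\norm{b_0}^2$, whence $\norm{b_i}\to\norm{b_0}$, as desired.

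I expect the genuinely load-bearing step to be the use of discreteness to force $b_i^*b_i$ and $b_0^*b_0$ into a common fibre: without it these elements live over distinct units, their difference $b_i^*b_i-b_0^*b_0$ is not defined, and there is no reverse triangle inequality with which to compare their norms—so the passage through $b^*b$ would buy nothing. The fibrewise identification of the subspace and norm topologies, although essential, is routine and follows purely formally from the bundle axioms.
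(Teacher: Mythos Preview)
Your proof is correct and follows essentially the same route as the paper: pass to $b_i^*b_i$, use discreteness of $\cG\z$ to force these into a common fibre, and then invoke \condref{F9}. The only difference is that you spell out the fibrewise fact (convergence in $B$ within a single fibre implies norm convergence) which the paper leaves implicit.
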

\begin{proof} Let $\{b_i\}_{i\in I}$ be a net in $B$ converging to $b$. By the assumptions that involution and multiplication are continuous, $b_i^* b_i\to b^* b$. Since $p:B\to \cG$ is continuous, $p(b_i^* b_i)\to p(b^* b)$. Because $\cG\z$ is discrete, we may assume without loss of generality that $p(b_i^* b_i)=p(b^* b)$ for all $i$. Therefore, $\{b_i^* b_i\}$ is a net converging to $b^* b$ in the $C^*$-algebra $\cB_{p(b^* b)}=\cB_{s_\cB(b)}$. Consequently, $\|b_i\|^2=\|b_i^* b_i\|\to\|b^* b\|=\|b\|^2$. Therefore, $b\mapsto\|b\|$ is continuous on $B$.  
\end{proof}

\begin{example} Given an \etale groupoid $\cG$, one can define the groupoid Fell bundle $\cB(\cG):=\mathbb{C}\times\cG=\{(a,x): a\in\mathbb{C}, x\in \cG\}$ as follows. A fibre $\cB(\cG)_{x}=\mathbb{C}\times\{x\}$ naturally inherits its norm from $\mathbb{C}$. Multiplication is given by $(a,x)(b,y)=(ab,xy)$ whenever $(x,y)\in\cG^{(2)}$, and involution is given by $(a,x)^*=(\overline{a}, x^{-1})$. One can easily verify that $\cB(\cG)$ is a Fell bundle over $\cG$. 
\end{example} 

\begin{example} Let $G$ be a discrete group. Exel \cite[Definition 16.2]{ExelFellBundle} defined the notion of $C^*$-grading that is closely related to Fell bundles over $G$: For a $C^*$-algebra $A$, a {\em $C^*$-grading} is a collection of linearly independent subspaces $\{\cA_g\}_{g\in G}$ such that $\oplus_{g\in G} \cA_g$ is dense in $A$
, $\cA_g \cA_h\subset \cA_{gh}$, and $\cA_g^*\subset \cA_{g^{-1}}$. Given such a grading, $\cB=\{\cA_g\}_{g\in G}$
defines a Fell bundle over $G$, where the multiplication and involution are inherited from the underlying $C^*$-algebra.

One has to be cautious that by passing from the $C^*$-algebra $A$ 
to the Fell bundle $\{\cA_g\}$, one may lose much information of the original $C^*$-algebra $A$. As pointed out by Exel \cite[Remark 16.3]{ExelFellBundle}, there may be multiple ways of completing $\oplus_{g\in G} \cA_g$, some of which may not recover $A$. 
\end{example}

\begin{definition}[cf.\ {\cite[Definition II.13.8]{FellDoran:Vol1} and \cite[Definition VIII.3.3]{FellDoran:Vol2}}]\label{def:Fellbdlhom}
    Suppose $\cB=(B,p)$ and $\cB'=(B',p')$ are two  Fell bundles over groupoids $\cG$ and $\cG'$ respectively, and let $f\colon \cG\to\cG'$ be a continuous groupoid homomorphism. A continuous map $\phi\colon B\to B'$  between the total spaces is called a {\em homomorphism $\cB\to \cB'$ of Fell bundles covariant with $f$} if
    \begin{enumerate}[label=\textup{(H\arabic*)}]
        \item $\phi (\cB_x)\subset \cB'_{f(x)} $ for all $x\in \cG$ and each $\phi|_{\cB_x}$ is a linear map between the Banach spaces $\cB_x$ and $\cB_{f(x)}$,
        \item $\phi$ is multiplicative, i.e.\ if $(b,c)\in \cB^{(2)}$, then 
        $\phi(b c) = \phi(b)\phi(c)$, and
        \item $\phi$ is $*$-preserving, i.e.\ $\phi(b^*)=\phi(b)^*$ for all $b\in\cB$.
    \end{enumerate}
    If, moreover, $\norm{\phi(b)}=\norm{b}$ for all $b\in \cB,$ we call $\phi$ {\em isometric}. If $\phi$ is bijective \textup(and isometric\textup), $\cG=\cG'$, and $f$ is the identity, then we say that $\phi$ is an {\em \textup(isometric\textup) isomorphism of Fell bundles}.
\end{definition}

\section{\ZS Product of a Fell Bundle and a groupoid}

Suppose $(\cG,\cH)$ is a matched pair of \etale groupoids and $\cB=(B,p)$ is a Fell bundle over $\cG$.
The goal of this section is to define a \ZS product of 
$\cB$ by the groupoid $\cH$ and show that this \ZS product is a Fell bundle over the \ZS product $\cG\bowtie \cH$. The Fell bundle $\cB$ defines a `$\cB$-restriction map' on $\cH$ quite easily: it can simply inherit the $\cG$-restriction map on $\cH$ from its $\cG$-grading. However, we need to additionally assume a certain type of 
$\cH$-action on the Fell bundle.

\begin{definition}\label{df.beta}
Assume $(\cG,\cH)$ is a matched pair of \etale groupoids as described in Definition~\ref{def:matchedpair}, and assume $\cB=(B,p)$ is a Fell bundle over $\cG$. For $r_{\cB}:=r_{\cG}\circ p \colon B\to \cG\z=\cH\z$, let $\cH\tensor*[_{s_{\cH}}]{\times}{_{r_{\cB}}} B$ be defined as in Equation~\eqref{eq:fibred-product}, equipped with the subspace topology of $\cH\times B$.

A {\em $(\cG,\cH)$-compatible
$\cH$-action on $\cB$} is a continuous map
\[
    \cH\tensor*[_{s_{\cH}}]{\times}{_{r_{\cB}}} B
    \overset{\beta}{\to}
    B
\]
satisfying the following conditions:
\begin{enumerate}[label=\textup{(A\arabic*)}]
\item\label{cond.A1} For any $(h,x)\in \cH\tensor*[_{s_{\cH}}]{\times}{_{r_{\cG}}}\cG$, the map $\beta_h := \beta (h,\textvisiblespace)$ maps $\cB_{x}$ into $\cB_{h\cdot x}$ 
and is linear.
\item\label{cond.A2}For any  
$(g,h)\in\cH^{(2)}$, $\beta_{gh}=\beta_g\circ\beta_h$. 
\item\label{cond.A3} For any $u\in \cH\z$, $\beta_u$ is the identity map. 
\item\label{cond.A4} For any $(b,c)\in  \cB^{(2)}$ such that $(h,bc)\in \cH\tensor*[_{s_{\cH}}]{\times}{_{r_{\cB}}} B$, we have
\[
\beta_h(bc)=\beta_h(b) \beta_{h|_{p(b)}}(c).\]
\item\label{cond.A5} For any $b\in \cB_{x}$ with $r_\cG(x)=s_\cH(h)$, we have
\[\beta_h(b)^* = \beta_{h|_x}(b^*).\]
\end{enumerate}
\end{definition} 

Using both \ref{cond.F1}--\ref{cond.F10} and \ref{cond.ZS1}--\ref{item:inverse-of-restriction}, one can check that the above conditions make sense.

\begin{proposition}\label{prop.A6} For any $h\in\cH$, the restricted map $\beta_h\colon \cB_{s_\cH(h)} \to \cB_{r_\cH(h)}$ is an isometric $*$-isomorphism of $C^*$-algebras.
\end{proposition}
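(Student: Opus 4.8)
The plan is to verify, in turn, that $\beta_h$ restricts to a map $\cB_{s_\cH(h)}\to\cB_{r_\cH(h)}$ which is linear, multiplicative, and $*$-preserving, then that it is bijective, and finally to invoke the standard fact that a bijective $*$-homomorphism between $C^*$-algebras is automatically isometric. Recall first that, since $s_\cH(h)$ and $r_\cH(h)$ are units, the fibres $\cB_{s_\cH(h)}$ and $\cB_{r_\cH(h)}$ carry their canonical $C^*$-algebra structures. Because $s_\cH(h)$ is a unit we have $r_\cG(s_\cH(h))=s_\cH(h)$, so $(h,s_\cH(h))$ lies in the domain of $\beta$, and \ref{item:acting-on-unit-is-unit} gives $h\cdot s_\cH(h)=r_\cH(h)$; hence by \condref{A1} the map $\beta_h$ sends $\cB_{s_\cH(h)}$ linearly into $\cB_{r_\cH(h)}$, so the domain and codomain stated in the proposition are correct.

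The crucial observation is that the ``twisted'' identities \condref{A4} and \condref{A5} collapse to the ordinary homomorphism identities once we restrict to the unit fibre. Indeed, for $b,c\in\cB_{s_\cH(h)}$ one has $p(b)=s_\cH(h)$, and \condref{ZS9} yields $h|_{p(b)}=h|_{s_\cH(h)}=h$. After checking that $(b,c)\in\cB^{(2)}$ and that $(h,bc)$ lies in the domain of $\beta$ (both immediate since $s_\cH(h)$ is a unit), \condref{A4} becomes $\beta_h(bc)=\beta_h(b)\beta_h(c)$, so $\beta_h$ is multiplicative. Likewise, the restriction appearing in \condref{A5} is again $h|_{s_\cH(h)}=h$, so $\beta_h(b^*)=\beta_h(b)^*$ and $\beta_h$ is $*$-preserving. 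Together with linearity this shows $\beta_h\colon\cB_{s_\cH(h)}\to\cB_{r_\cH(h)}$ is a $*$-homomorphism of $C^*$-algebras.

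For bijectivity I would exhibit $\beta_{h^{-1}}$ as a two-sided inverse. Using $s_\cH(h^{-1})=r_\cH(h)$ and $r_\cH(h^{-1})=s_\cH(h)$, both $(h^{-1},h)$ and $(h,h^{-1})$ are composable in $\cH$, so \condref{A2} gives $\beta_{h^{-1}}\circ\beta_h=\beta_{h^{-1}h}$ and $\beta_h\circ\beta_{h^{-1}}=\beta_{hh^{-1}}$; since $h^{-1}h=s_\cH(h)$ and $hh^{-1}=r_\cH(h)$ are units, \condref{A3} identifies both composites with the relevant identity maps. Hence $\beta_h$ is a bijection with inverse $\beta_{h^{-1}}$.

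Finally, isometry follows for free: a $*$-homomorphism of $C^*$-algebras is contractive, so $\norm{\beta_h(b)}\le\norm{b}$ and $\norm{b}=\norm{\beta_{h^{-1}}(\beta_h(b))}\le\norm{\beta_h(b)}$, forcing $\norm{\beta_h(b)}=\norm{b}$. Thus $\beta_h$ is an isometric $*$-isomorphism. I do not anticipate a genuine obstacle; the only point requiring care is the bookkeeping that confirms the restricted domains of \condref{A4} and \condref{A5} are populated and that the restriction maps simplify via \condref{ZS9}, after which the statement is just an assembly of the axioms with the standard $C^*$-algebra isometry fact.
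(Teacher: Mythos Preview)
Your proof is correct and follows essentially the same approach as the paper: verify that \condref{A4} and \condref{A5} collapse to the untwisted homomorphism identities on the unit fibre via \condref{ZS9}, use \condref{A2} and \condref{A3} to exhibit $\beta_{h^{-1}}$ as a two-sided inverse, and invoke automatic isometry of $*$-isomorphisms between $C^*$-algebras. The paper's argument is the same, with slightly less explicit bookkeeping about domains.
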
 

\begin{proof} For any $h\in\cH$, take $a\in \cB_{s_\cH(h)}$. By \condref{A1}, $\beta_h(a)\in\cB_{h\cdot s_\cH(h)}=\cB_{r_\cH(h)}$. For any $a,b\in \cB_{s_\cH(h)}$, 
\[\beta_h(ab)=\beta_h(a) \beta_{h|_{s_\cH(h)}}(b)=\beta_h(a)\beta_h(b).\]
Moreover, by \condref{A5},
\[\beta_h(a)^*=\beta_{h|_{s_\cH(h)}}(a^*)=\beta_h(a^*).\]
Therefore, $\beta_h\colon\cB_{s_\cH(h)} \to \cB_{r_\cH(h)}$ is a $*$-homomorphism. By \condref{A2} and \condref{A3}, $\beta_{h^{-1}}\beta_h=\beta_{s_\cH(h)}$ is the identity on $\cB_{s_\cH(h)}$ and $\beta_h\beta_{h^{-1}}=\beta_{r_\cH(h)}$ is the identity on $\cB_{r_\cH(h)}$; thus, $\beta_h$ is a bijective $*$-isomorphism. Since $\cB_{s_\cH(h)}$ and $\cB_{r_\cH(h)}$ are $C^*$-algebras, it is automatic that $\beta_h$ is isometric on $\cB_{s_\cH(h)}$.
\end{proof} 

\begin{corollary}\label{cor.iso} For any $h\in \cH$ and $x\in\cG$ with $r_\cG(x)=s_\cH(h)$, $\beta_h$ is isometric from $\cB_{x}$ to $\cB_{h\cdot x}$. That is, for any $a\in \cB_{x}$, 
\[\|\beta_h(a)\|= \|a\|.\]
\end{corollary}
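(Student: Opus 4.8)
The plan is to reduce the claim to the unit-fibre case already settled in Proposition~\ref{prop.A6} by exploiting the $C^*$-identity. The key point is that for $a\in\cB_x$ the product $a^*a$ lands in the fibre $\cB_{s_\cG(x)}$ over the unit $s_\cG(x)$, which is a genuine $C^*$-algebra, and that the axioms \condref{A4} and \condref{A5} let us transport $\beta_h(a)^*\beta_h(a)$ back to an application of $\beta$ to $a^*a$, turning the isometry question into one about this unit fibre.

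First I would record where everything lives. By \condref{A1} we have $\beta_h(a)\in\cB_{h\cdot x}$, so $\beta_h(a)^*\beta_h(a)$ lies in the unit fibre $\cB_{s_\cG(h\cdot x)}$, and the $C^*$-identity \condref{F9} there gives $\norm{\beta_h(a)}^2=\norm{\beta_h(a)^*\beta_h(a)}$. The heart of the argument is to rewrite this product via $\beta$. Using \condref{A5} we have $\beta_h(a)^*=\beta_{h|_x}(a^*)$, and then applying \condref{A4} with $h$ replaced by $h|_x$, with $b=a^*$ and $c=a$ (one checks the relevant source/range conditions hold, so the product and the action are defined), yields
\[
\beta_{h|_x}(a^*a)=\beta_{h|_x}(a^*)\,\beta_{(h|_x)|_{x^{-1}}}(a).
\]
The index on the second factor simplifies: since $p(a^*)=x^{-1}$ and $x\,x^{-1}=r_\cG(x)=s_\cH(h)$, conditions \condref{ZS3} and \condref{ZS9} give $(h|_x)|_{x^{-1}}=h|_{xx^{-1}}=h|_{s_\cH(h)}=h$. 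Hence $\beta_{h|_x}(a^*a)=\beta_{h|_x}(a^*)\beta_h(a)=\beta_h(a)^*\beta_h(a)$.

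It then remains to invoke the unit-fibre case. The element $a^*a$ sits in $\cB_{s_\cG(x)}$, and by \condref{ZS6} we have $s_\cH(h|_x)=s_\cG(x)$, so $\beta_{h|_x}$ restricted to this fibre is precisely the map $\cB_{s_\cH(h|_x)}\to\cB_{r_\cH(h|_x)}$ to which Proposition~\ref{prop.A6} applies. That proposition tells us $\beta_{h|_x}$ is isometric there, whence $\norm{\beta_{h|_x}(a^*a)}=\norm{a^*a}=\norm{a}^2$. Combining the three equalities gives $\norm{\beta_h(a)}^2=\norm{a}^2$, as claimed.

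I expect the only delicate point to be the bookkeeping of fibre indices—in particular the collapse $(h|_x)|_{x^{-1}}=h$, which is exactly where the groupoid axioms \condref{ZS3} and \condref{ZS9} enter, and verifying that every product and involution used is genuinely defined. None of this is deep, but it is the step where an index slip would most easily hide.
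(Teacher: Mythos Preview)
Your argument is correct and follows the same strategy as the paper: reduce to the unit-fibre isometry of Proposition~\ref{prop.A6} via the $C^*$-identity together with \condref{A4} and \condref{A5}. The only cosmetic difference is that the paper works with $aa^*\in\cB_{r_\cG(x)}=\cB_{s_\cH(h)}$ and applies $\beta_h$ directly, whereas you work with $a^*a\in\cB_{s_\cG(x)}$ and apply $\beta_{h|_x}$, which costs you the extra (and correctly handled) simplification $(h|_x)|_{x^{-1}}=h$.
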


\begin{proof} By Proposition \ref{prop.A6}, $\beta_h$ is isometric on $\cB_{s_\cH(h)}$. Consider $aa^* \in \cB_{r_\cG(x)}=\cB_{s_\cH(h)}$, we have $\|\beta_h(aa^*)\|=\|aa^*\|=\|a\|^2$. On the other hand, by \condref{A4} and \condref{A5},
\[\|\beta_h(aa^*)\| = \| \beta_h(a) \beta_{h|_x}(a^*)\| = \| \beta_h(a) \beta_h(a)^*\| = \|\beta_h(a)\|^2.\]
Therefore, $\|\beta_h(a)\|=\|a\|$, as desired. 
\end{proof} 

From an $\cH$-action $\beta$ on the Fell bundle $(B,p)$, we now construct a Fell bundle $(C,q)$ over the locally compact Hausdorff \etale groupoid $\cG\bowtie \cH$ as follows.
\begin{enumerate}[label=(C\arabic*)]
    \item\label{cond.C1}  As a topological space, let
    \[
        C:=B\tensor*[_{s_{\cB}}]{\times}{_{r_{\cH}}} \cH
        =
        \left\{
            (b,h)\in B\times \cH:
            s_{\cB}(b)=r_{\cH} (h)
        \right\},
    \]
    equipped with the subspace topology. The constraint we put on elements of $C$ enables us to define the following map with values in the \ZS product:
    \[
        q\colon C \to \cG\bowtie \cH,
        \quad q(b,h) = (p(b),h).
    \]
    Let $\cC:= (C,q)$.
    \item\label{cond.C2}  We define a multiplication $\bullet\colon \cC^{(2)}\to C$ by
    \[
        (a,g)\bullet (b,h) := (a\beta_g(b), g|_{p(b)} h),
    \]
    where 
    \[\cC^{(2)}:=\{ ((a,g),(b,h))\in C\times C : (q(a,g),q(b,h))\in (\cG\bowtie\cH)^{(2)}\},\]
    as defined before for $\cB=(B,p)$.
    \item\label{cond.C3}  We define an involution ${}^* \colon C\to C$ by
    \[(b, h)^* = \left(\beta_{h^{-1}}(b^*), h^{-1}|_{p(b)^{-1}}\right).\]
\end{enumerate}

\begin{remark}\label{rm.pos}
The fibre $\cC_{(x,h)} := q\inv (x,h)$ of $\cC$ is canonically isomorphic to the fibre $\cB_{x}$ of $\cB$, making $\cC_{(x,h)}$ a complex Banach space. In particular, for $u\in \cG\z=\cH\z$, 
the fibre $\cC_u = \cC_{(u,u)}$ is a $C^*$-algebra. An element $(b, u)\in\cC_{u}$ is positive if and only if $b\in\cB_u$ is positive. 
\end{remark}

\begin{proposition}\label{prop.USCbundle}
    The pair $\cC=(C,q)$ is an upper semi-continuous Banach bundle over the \ZS product $\cG\bowtie \cH$. 
\end{proposition}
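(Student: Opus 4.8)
The plan is to verify each clause of Definition~\ref{def.USCBdl} directly, taking advantage of the fact that the $\cH$-coordinate is entirely passive: every piece of structure on $\cC$ arises from the corresponding structure on $\cB$ by carrying the groupoid element along unchanged. First I would record the fibres. As noted in Remark~\ref{rm.pos}, for $(x,h)\in\cG\bowtie\cH$ one has $\cC_{(x,h)}=q^{-1}(x,h)=\cB_x\times\{h\}$, because the side condition $s_\cB(b)=r_\cH(h)$ defining $C$ amounts, for $b\in\cB_x$, to $s_\cG(x)=r_\cH(h)$, which already holds since $(x,h)\in\cG\bowtie\cH$. I give $\cC_{(x,h)}$ the complex Banach space structure transported from $\cB_x$ along $b\mapsto(b,h)$, so that $\lambda(b,h)+\mu(c,h)=(\lambda b+\mu c,h)$ and $\norm{(b,h)}=\norm{b}$.

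The first substantive task is to show that $q$ is a continuous open surjection. Continuity is immediate, as $q$ is the restriction to $C$ of the continuous map $(b,h)\mapsto(p(b),h)$; surjectivity follows by sending $(x,h)\in\cG\bowtie\cH$ to $(0_x,h)\in C$, which is admissible precisely because $s_\cG(x)=r_\cH(h)$. The heart of the matter is openness, and here I would work on basic open sets. For open $U\subseteq B$ and $V\subseteq\cH$, I claim
\[
    q\bigl((U\times V)\cap C\bigr)=\bigl(p(U)\times V\bigr)\cap(\cG\bowtie\cH).
\]
The inclusion $\subseteq$ is a direct computation using $s_\cB=s_\cG\circ p$; for $\supseteq$, given $(x,h)$ in the right-hand side one picks $b\in U$ with $p(b)=x$ and observes that $s_\cB(b)=s_\cG(x)=r_\cH(h)$ forces $(b,h)\in(U\times V)\cap C$ with $q(b,h)=(x,h)$. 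Since $p$ is open, $p(U)$ is open, so the right-hand side is open in $\cG\bowtie\cH$; as $q$ commutes with unions and such sets form a base for the subspace topology on $C\subseteq B\times\cH$, this proves $q$ is open.

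It then remains to verify \ref{cond.B-USC}--\ref{cond.B-nets}, each of which reduces to the corresponding axiom for $\cB$ together with the observation that the outputs land in $C$ automatically. For \ref{cond.B-USC}, $\norm{(b,h)}=\norm{b}$ is the composite of the continuous projection $C\to B$ with the upper semi-continuous norm on $B$. For \ref{cond.B-plus}, on $C\tensor*[_{q}]{\times}{_{q}}C$ the condition $q(b,h)=q(c,k)$ means $p(b)=p(c)$ and $h=k$, so addition is $((b,h),(c,h))\mapsto(b+c,h)$; projecting to the $B$-coordinates lands in $B\tensor*[_{p}]{\times}{_{p}}B$, where addition is continuous by \ref{cond.B-plus} for $\cB$, and $b+c\in\cB_{p(b)}$ keeps the result in $C$. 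Condition \ref{cond.B-times} is handled the same way, since $(b,h)\mapsto(\lambda b,h)$ is continuous by \ref{cond.B-times} for $\cB$, and for \ref{cond.B-nets}, if $q(b_i,h_i)\to(x,h)$ with $\norm{b_i}\to0$ then $p(b_i)\to x$ and $h_i\to h$, so \ref{cond.B-nets} for $\cB$ gives $b_i\to0_x$ in $B$, whence $(b_i,h_i)\to(0_x,h)$, the zero of $\cC_{(x,h)}$.

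I expect the only genuine obstacle to be the openness of $q$; the remaining verifications are mechanical once the identification $\cC_{(x,h)}\cong\cB_x$ is in place and one keeps careful track of the fibred-product constraint. The local compactness, Hausdorffness, and \'etaleness of the base $\cG\bowtie\cH$ are already available from the preliminaries, so nothing about the base itself needs to be reproved here.
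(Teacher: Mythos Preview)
Your proof is correct and follows essentially the same approach as the paper's: both reduce each axiom \ref{cond.B-USC}--\ref{cond.B-nets} to the corresponding property of $\cB$ via the identification $\cC_{(x,h)}\cong\cB_x$. Your treatment of the openness of $q$ is in fact more careful than the paper's, which simply asserts that $q$ is a continuous open surjection ``since $p$ is'' without writing out the basic-open-set argument you give.
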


\begin{proof}
    The map $q$ is clearly a continuous open surjection, since $p$ is.
    
    As $\cC_{(x,h)} := q\inv (x,h)$ inherits its structure of a complex Banach space from $\cB_{x}$, we have, for $((a,h),(b,g))\in C\tensor*[_{q}]{\times}{_{q}} C$ and $\lambda\in\mathbb{C}$, that $h=g$ and $\lambda(a,h)+(b,g) = (\lambda a+b,h)$. It is now clear that addition on $ C\tensor*[_{q}]{\times}{_{q}} C$ and multiplication by a scalar $\lambda$ on $C$ are continuous since this is the case for $(B,p)$, proving \ref{cond.B-plus} and \ref{cond.B-times} of Definition~\ref{def.USCBdl}.
    
    Since
    \[
        \begin{tikzcd}[row sep = tiny]
            C \ar[r, "\mathrm{pr}_{1}"] \ar[rr, bend left=50, "\norm{\cdot}"{name=U}] & B \ar[to=U, phantom, "\circlearrowleft"]\ar[r, "\norm{\cdot}"] &\mathbb{R}_{\geq 0}
        \end{tikzcd}
    \]
    commutes, continuity of the coordinate projection $\mathrm{pr}_{1}$ and upper semi-continuity of $\cB$ imply upper semi-continuity of $\cC$, i.e.\ \ref{cond.B-USC} holds.
    
    Lastly, assume $(b_i, h_i)_i$ is a net in $C$ with $q(b_i,h_i)$ converging to $(x,h)\in \cG\bowtie\cH$ and $\norm{(b_i,h_i)}\to 0$, i.e.\ $p(b_i) \to x$, $h_i\to h$, and $\norm{b_i}\to o$. Since $(B,p)$ satisfies \ref{cond.B-nets}, it follows that $(b_i)_i$ converges to $0\in \cB_{x}$ in $B$, so that $(b_i, h_i)_i$ converges to $(0, h) = 0\in \cC_{(x,h)}$ in $C$. This shows that $\cC$ satisfies  \ref{cond.B-nets} and is all in all an upper semi-continuous Banach bundle over $\cG\bowtie\cH$.
\end{proof}

\pagebreak[3]\begin{lemma}\label{lem:mult-on-C}
    The multiplication $\bullet$ on $\cC$ is well-defined and continuous.
\end{lemma}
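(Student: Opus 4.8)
The plan is to handle well-definedness and continuity by treating the $B$- and $\cH$-coordinates of $\bullet$ separately, since a map into $C\subset B\times\cH$ is continuous exactly when its two components are, and its output lies in $C$ exactly when each constituent operation is legal and the resulting pair satisfies the defining constraint of $C$. Fix a composable pair $((a,g),(b,h))\in\cC^{(2)}$ and write $x=p(a)$, $y=p(b)$. Unpacking the memberships, belonging to $C$ gives $s_\cG(x)=r_\cH(g)$ and $s_\cG(y)=r_\cH(h)$, while $(q(a,g),q(b,h))\in(\cG\bowtie\cH)^{(2)}$ supplies the single extra constraint $r_\cG(y)=s_\cH(g)$.

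For well-definedness I would verify in turn that every operation appearing in $(a\beta_g(b),\,g|_{y}h)$ is defined and that the output lands in $C$. The constraint $s_\cH(g)=r_\cG(y)$ is precisely what is needed for both $\beta_g(b)$ and the restriction $g|_{y}$ to make sense; then $\beta_g(b)\in\cB_{g\cdot y}$ by \condref{A1}, and $g|_y$ satisfies $s_\cH(g|_y)=s_\cG(y)$ by \condref{ZS6}. The Fell-bundle product $a\,\beta_g(b)$ is legal because $r_\cG(g\cdot y)=r_\cH(g)=s_\cG(x)$ by \condref{ZS5}, and the groupoid product $g|_y\,h$ is legal because $s_\cH(g|_y)=s_\cG(y)=r_\cH(h)$. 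Finally, to confirm $(a\beta_g(b),g|_y h)\in C$ I would compute, using \condref{F1} and \condref{ZS7}, that
\[
s_\cB\bigl(a\,\beta_g(b)\bigr)=s_\cG\bigl(x(g\cdot y)\bigr)=s_\cG(g\cdot y)=r_\cH(g|_y)=r_\cH(g|_y\,h),
\]
so the required compatibility $s_\cB=r_\cH$ holds.

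Continuity then follows by exhibiting each coordinate as a composite of continuous maps. The $\cH$-coordinate $((a,g),(b,h))\mapsto g|_{p(b)}\,h$ is continuous because $p$, the restriction map $\vert$, and multiplication in $\cH$ are all continuous; the composable-pair conditions above guarantee that the relevant composites remain inside the fibred products on which these maps are defined (in particular $s_\cH(g)=r_\cG(p(b))$ for the restriction, and $s_\cH(g|_y)=r_\cH(h)$ for the $\cH$-product). For the $B$-coordinate, continuity of the action $\beta$ yields continuity of $(g,b)\mapsto\beta_g(b)$, so $((a,g),(b,h))\mapsto(a,\beta_g(b))$ is continuous into $B\times B$; by the range computation above it in fact lands in $\cB^{(2)}$, and composing with the continuous Fell-bundle multiplication gives continuity of $((a,g),(b,h))\mapsto a\,\beta_g(b)$.

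The only genuine obstacle is the bookkeeping in the well-definedness step: one must keep straight which of the three given constraints licenses each operation and invoke the correct \ZS identity (\condref{ZS5}, \condref{ZS6}, \condref{ZS7}) at the right moment. Once well-definedness is in place—and in particular once one knows $(a,\beta_g(b))\in\cB^{(2)}$—the continuity argument is essentially formal, requiring only that the intermediate maps respect the fibred-product domains so that the continuous structure maps of $\cB$, of $\beta$, and of $\cH$ may be applied.
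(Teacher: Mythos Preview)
Your proof is correct and follows essentially the same approach as the paper: you verify that each constituent operation ($\beta_g(b)$, $g|_y$, the Fell-bundle product, the $\cH$-product) is defined using the same \ZS identities \condref{ZS5}, \condref{ZS6}, \condref{ZS7} and then confirm the output lies in $C$, after which continuity follows formally from continuity of the ingredient maps. The only differences are cosmetic---you name both $p(a)$ and $p(b)$ whereas the paper names only $p(b)$, and you are slightly more explicit about decomposing continuity into the two coordinates.
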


\begin{proof}
     Let $(a,g),(b,h)\in C$ such that $(q(a,g),q(b,h))\in (\cG\bowtie\cH)^{(2)}$, i.e.\ $s_{\cH}(g) =r_{\cB}(b)= r_{\cG}(p(b))$. This means, first of all, that $x:=p(b)\in \cG$ can act on $g$, yielding $g\vert_{x }\in \cH$. Moreover, it means that $(g,b)$  is in $\cH\tensor*[_{s_{\cH}}]{\times}{_{r_{\cB}}} B $, the domain of $\beta$, so that $\beta_{g}(b)$ is defined. As $(a,g)\in C$, we have $s_{\cB}(a)=r_{\cH}(g)$, so that
    \begin{align*}
        s_{\cB}(a)
        &\overset{\condref{ZS5}}{=}
        r_{\cG}(g\cdot x )
        \overset{\condref{A1}}{=}
        r_{\cG}(p(\beta_{g}(b))),
    \end{align*}
    i.e.\ $(a, \beta_{g}(b))\in \cB^{(2)}$, so that their product $a\beta_{g}(b)$ is defined. Similarly, since $(b,h)\in C$, we have
    \[
        r_{\cH}(h) = s_{\cG} (x ) \overset{\condref{ZS5}}{=} s_{\cH} (g\vert_{x }),
    \]
    so that $(g\vert_{x },h)\in\cG^{(2)},$ i.e.\ their product $g\vert_{x }h$ is defined in $\cG$. All in all, it makes sense to define
    \[(a,g)\bullet (b,h) := (a\beta_g(b), g|_{x } h),\]
    and it remains to show that it is an element of $C$. To this end, we note that  $p(a\beta_g(b))=p(a)p(\beta_g(b))$ by \condref{F1} and $p(\beta_g (b))= g\cdot p(b)=g\cdot x$ by \condref{A1}, so that
    \begin{align*}
        s_{\cB} ( a\beta_g(b))
        =
        s_{\cG} (p(\beta_g(b)))
        =
        s_{\cG} (g\cdot x)
        \overset{\condref{ZS7}}{=}
        r_{\cH} (g|_{x } h).
    \end{align*}
    Thus, 
    \[
        (a,g)\bullet (b,h)
        \in
        B\tensor*[_{s_{\cB}}]{\times}{_{r_{\cH}}} \cH
        =
        C.
    \]
    It is now obvious that $\bullet$ is continuous, as the $\cH$-action $\beta$ on $B$, the multiplication on $B$, the bundle map $p$, the $\cG$-restriction on $\cH$, and the multiplication on $\cH$ are all continuous maps. 
\end{proof}

\begin{lemma}\label{lem:invol-on-C}
    The involution ${}^*$ on $\cC$ is well-defined and continuous.
\end{lemma}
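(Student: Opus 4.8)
The plan is to mirror the proof of Lemma~\ref{lem:mult-on-C}: first verify that each operation appearing in the definition of $(b,h)^*$ is legitimate, then confirm that the output again satisfies the fibred-product constraint defining $C$, and finally read off continuity from the continuity of the constituent maps. Fix $(b,h)\in C$ and write $x:=p(b)$, so that the defining constraint $s_{\cB}(b)=r_{\cH}(h)$ reads $s_{\cG}(x)=r_{\cH}(h)$.

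First I would check that the formula makes sense. By \condref{F5}, $b^*\in\cB_{x^{-1}}$, whence $r_{\cB}(b^*)=r_{\cG}(x^{-1})=s_{\cG}(x)=r_{\cH}(h)=s_{\cH}(h^{-1})$; thus $(h^{-1},b^*)$ lies in the domain $\cH\tensor*[_{s_{\cH}}]{\times}{_{r_{\cB}}}B$ of $\beta$, so $\beta_{h^{-1}}(b^*)$ is defined. The same chain of source/range identities gives $s_{\cH}(h^{-1})=r_{\cG}(x^{-1})$, so the restriction $h^{-1}|_{x^{-1}}$ is also defined.

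Next I would verify that $(b,h)^*\in C$, i.e.\ that $s_{\cB}(\beta_{h^{-1}}(b^*))=r_{\cH}(h^{-1}|_{x^{-1}})$. By \condref{A1}, $\beta_{h^{-1}}(b^*)\in\cB_{h^{-1}\cdot x^{-1}}$, so the left-hand side equals $s_{\cG}(h^{-1}\cdot x^{-1})$, which by \condref{ZS7} (applied to the composable pair $h^{-1},x^{-1}$) is exactly $r_{\cH}(h^{-1}|_{x^{-1}})$. This is the desired identity; in fact it shows $q((b,h)^*)=(h^{-1}\cdot x^{-1},h^{-1}|_{x^{-1}})=(x,h)^{-1}$ by Equation~\eqref{eq:inv-on-ZS-gpd}, confirming that involution reverses fibres as it should.

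Finally, continuity follows by exhibiting the involution as a composite of continuous maps: $(b,h)\mapsto(h^{-1},b^*)$ is continuous because inversion on $\cH$ and the involution on $B$ are, and by the first step it lands in the domain of $\beta$, so $(b,h)\mapsto\beta_{h^{-1}}(b^*)$ is continuous; likewise $(b,h)\mapsto h^{-1}|_{p(b)^{-1}}$ is continuous because inversion on $\cH$, the bundle map $p$, inversion on $\cG$, and the $\cG$-restriction map are all continuous. I expect no serious obstacle here: the only point requiring care is the bookkeeping of sources and ranges for the inverted elements $h^{-1}$ and $x^{-1}$, ensuring that \condref{ZS7} is invoked with the correct arguments and that $(h^{-1},b^*)$ genuinely lies in the domain of $\beta$; everything else is a direct transcription of the continuity of the underlying structure maps.
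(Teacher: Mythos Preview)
Your proof is correct and follows essentially the same approach as the paper: verify that $\beta_{h^{-1}}(b^*)$ and $h^{-1}|_{x^{-1}}$ are defined via the source/range bookkeeping, check membership in $C$ using \condref{A1} and \condref{ZS7}, and deduce continuity from that of the constituent maps. Your additional remark that $q((b,h)^*)=(x,h)^{-1}$ is a nice observation the paper omits here (it appears later when verifying \condref{F5}).
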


\begin{proof}
    Let $(b,h)\in C$ with $x:= p(b)$, i.e.\ $s_{\cB}(b)=r_{\cH}(h)$. As $p(b^*)= x\inv$ by \condref{F5}, we have
    \[
        s_{\cH} (h\inv) = r_{\cH} (h) 
        =
        s_{\cB}(b)
        =
        s_{\cG} (x)
        =
        r_{\cG} (x\inv)
        =
        r_{\cB} (b^*).
    \]
    This shows both that $(h\inv,x\inv)\in \cH \tensor*[_{s_{\cH}}]{\times}{_{r_{\cG}}}\cG$, so that $h\inv\vert_{x\inv}$ is defined, and that $(h\inv,b^*)$ is in $\cH\tensor*[_{s_{\cH}}]{\times}{_{r_{\cB}}} B $, the domain of $\beta$, so that $\beta_{h\inv}(b^*)$ is defined. All in all, it makes sense to define
    \[(b, h)^* := \left(\beta_{h^{-1}}(b^*), h^{-1}|_{x^{-1}}\right),\]
    and it remains to show that it is an element of $C$. To this end, we compute
    \begin{align*}
        s_{\cB}(\beta_{h^{-1}}(b^*))
        \overset{\condref{A1}}{=}
        s_{\cG} (h\inv \cdot p (b^*))
        =
        s_{\cG} (h^{-1}\cdot x^{-1})
        \overset{\condref{ZS7}}{=}
        r_{\cH} (h^{-1}|_{x^{-1}}).
    \end{align*}
    Thus, 
    \[
        (b,h)^*
        \in
        B\tensor*[_{s_{\cB}}]{\times}{_{r_{\cH}}} \cH
        =
        C.
    \]
    It is now obvious that ${}^*$ is continuous, as the $\cH$-action $\beta$ on $B$, involution on $B$, the bundle map $p$, the $\cG$-restriction on $\cH$, and inversion on $\cH$ are all continuous maps.
\end{proof}

\begin{theorem}\label{thm.bundle}
    The bundle $\cC$
    defined in \ref{cond.C1}, together with the multiplication defined in \ref{cond.C2} and the involution defined in \ref{cond.C3}, is a Fell bundle over the \ZS product $\cG\bowtie \cH$. We will denote $\cC$ by $\cB\bowtie_\beta \cH$, and we call it the \ZS product of the Fell bundle $\cB$ by $\cH$.
\end{theorem}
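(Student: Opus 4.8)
The plan is to invoke what has already been established---that $\cC$ is an upper semi-continuous Banach bundle (Proposition~\ref{prop.USCbundle}) and that $\bullet$ and ${}^*$ are well-defined and continuous (Lemmas~\ref{lem:mult-on-C} and~\ref{lem:invol-on-C})---and then to verify only the algebraic axioms \condref{F1}--\condref{F10}. Several are immediate. Axiom \condref{F1} amounts to checking that $q$ is multiplicative, i.e.\ $q((a,g)\bullet(b,h)) = q(a,g)\,q(b,h)$ in $\cG\bowtie\cH$: since $p(a\beta_g(b)) = p(a)(g\cdot p(b))$ by \condref{F1} for $\cB$ and \condref{A1}, the first coordinate is $p(a)(g\cdot p(b))$ and the second is $g|_{p(b)}h$, which is precisely the product in $\cG\bowtie\cH$ (this was in effect carried out inside Lemma~\ref{lem:mult-on-C}). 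Bilinearity \condref{F2} and conjugate linearity \condref{F6} follow from linearity of each $\beta_g$ (\condref{A1}), from bi-/conjugate-linearity in $\cB$, and from the fact that scaling $(b,h)$ leaves both the $\cH$-coordinate and $p(b)$ untouched. Axiom \condref{F5} was essentially proved inside Lemma~\ref{lem:invol-on-C}, where $q((b,h)^*) = q(b,h)^{-1}$ was verified using \condref{A1}, \condref{F5} for $\cB$, and the inversion formula in $\cG\bowtie\cH$.

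For associativity \condref{F3} I would expand $((a,g)\bullet(b,h))\bullet(c,k)$ and $(a,g)\bullet((b,h)\bullet(c,k))$ and compare coordinates, writing $x=p(b)$ and $y=p(c)$. The $\cB$-coordinates agree because
\[
\beta_g\bigl(b\,\beta_h(c)\bigr) = \beta_g(b)\,\beta_{g|_{x}}\bigl(\beta_h(c)\bigr) = \beta_g(b)\,\beta_{g|_{x}h}(c),
\]
the first equality being \condref{A4} and the second \condref{A2}. The $\cH$-coordinates agree because $(g|_x h)|_y = g|_{x(h\cdot y)}\,h|_y$ by \condref{ZS4} and \condref{ZS3}, while on the other side the relevant restriction index is $p(b\beta_h(c)) = x(h\cdot y)$ by \condref{F1} and \condref{A1}; both sides then reduce to $g|_{x(h\cdot y)}\,h|_y\,k$.

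I expect the main obstacle to be \condref{F7}, $\bigl((a,g)\bullet(b,h)\bigr)^* = (b,h)^*\bullet(a,g)^*$, because of the bookkeeping involved in matching coordinates. A useful simplification is that, once \condref{F1} and \condref{F5} are in hand, both sides lie in the same fibre $\cC_{(q(a,g)\,q(b,h))^{-1}}$, so it suffices to match their $\cB$-coordinates; this reduces \condref{F7} to a single identity in $B$. I would prove that identity by using \condref{F7} for $\cB$ together with \condref{A5} to push the involution past $\beta_g$, then repeatedly applying \condref{A4} to distribute $\beta$ over products and \condref{A2} to collapse the resulting compositions (in particular $\beta_{h^{-1}|_{x^{-1}}}\circ\beta_{g^{-1}}$). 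This is the one place where \condref{A2}, \condref{A4}, \condref{A5} must all be juggled simultaneously, which is why I regard it as the hard step.

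The remaining axioms are metric or positivity statements. Axiom \condref{F4} follows from $\|(a,g)\bullet(b,h)\| = \|a\beta_g(b)\|\le \|a\|\,\|\beta_g(b)\| = \|a\|\,\|b\|$, using \condref{F4} for $\cB$ and the isometry of $\beta_g$ from Corollary~\ref{cor.iso}. For \condref{F9} and \condref{F10} I would first compute, with $x=p(b)$, that \condref{A4} makes the $\cB$-coordinate of $(b,h)^*\bullet(b,h)$ equal to $\beta_{h^{-1}}(b^*)\,\beta_{h^{-1}|_{x^{-1}}}(b) = \beta_{h^{-1}}(b^*b)$, while \condref{ZS3} and \condref{ZS9} force its $\cH$-coordinate to be the unit $s_\cH(h)$; hence $(b,h)^*\bullet(b,h) = \bigl(\beta_{h^{-1}}(b^*b),\, s_\cH(h)\bigr)$ sits in the $C^*$-algebra $\cC_{s_\cH(h)}$. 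Corollary~\ref{cor.iso} then gives $\|(b,h)^*\bullet(b,h)\| = \|b^*b\| = \|b\|^2 = \|(b,h)\|^2$, and $\|(b,h)^*\| = \|\beta_{h^{-1}}(b^*)\| = \|b\|$ again by Corollary~\ref{cor.iso}, establishing \condref{F9}; positivity \condref{F10} follows since $\beta_{h^{-1}}$ is a $*$-isomorphism of $C^*$-algebras by Proposition~\ref{prop.A6}, hence positivity-preserving, so that Remark~\ref{rm.pos} applies. Finally \condref{F8} is a direct coordinate computation in which the $\cB$-coordinate collapses via \condref{A2}, \condref{A5} and \condref{A3} (after checking that the relevant restriction product is a unit) and the $\cH$-coordinate returns to $h$ via \condref{ZS3}, \condref{ZS2} and Lemma~\ref{lm.ZS}.
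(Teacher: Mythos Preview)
Your proposal is correct and follows essentially the same approach as the paper: invoke Proposition~\ref{prop.USCbundle} and Lemmas~\ref{lem:mult-on-C}, \ref{lem:invol-on-C}, then verify \condref{F1}--\condref{F10} by direct computation using exactly the same ingredients (\condref{A1}--\condref{A5}, Corollary~\ref{cor.iso}, Proposition~\ref{prop.A6}, Remark~\ref{rm.pos}, and the \ZS identities from Lemma~\ref{lm.ZS}). Your observation that in \condref{F7} the $\cH$-coordinate check is redundant once \condref{F1} and \condref{F5} are established is a nice streamlining the paper does not exploit---it carries out both coordinate computations explicitly---but this does not change the substance of the argument.
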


\begin{proof}
We have already seen in Proposition~\ref{prop.USCbundle} that $\cC$ is an upper semi-continuous Banach bundle over $\cG\bowtie\cH$, and in Lemmas~\ref{lem:mult-on-C} and~\ref{lem:invol-on-C} that $\bullet$ resp.\ ${}^*$ are well-defined continuous $C$-valued maps.
It remains to show that $\cC$
satisfies conditions \hyperref[cond.F1]{(F1)} through \hyperref[cond.F10]{(F10)}.

For \hyperref[cond.F1]{(F1)}: Take $ ( a , h ) \in \cC_{(x,h)}$ and $ ( b , g ) \in \cC_{(y,g)}$ such that $r_\cG(y)=s_\cH(h)$, so that
\[ ( a , h )  ( b , g ) =(a\beta_h(b), h|_y g).\]
By \ref{cond.A1}, $\beta_h(b)\in \cB_{h\cdot y}$. By \ref{cond.ZS5}, $r_\cG(h\cdot y)=r_\cH(h)=s_\cG(x)$, and thus $(x,h\cdot y)\in \cG^{(2)}$. Therefore, by \condref{F1} for $\cB$, $a\beta_h(b)\in \cB_{x(h\cdot y)}$. Hence, the product $(a\beta_h(b) , h|_y g)$ is an element of $\cC_{(x(h\cdot y), h|_y g)}=\cC_{(x,h)(y,g)}.$

For \condref{F2}: The multiplication on $B$ is bilinear and $\beta_h$ is linear, so it is clear that the multiplication on $C$ is bilinear. 

For \condref{F3}: Take $ ( a , h ) \in \cC_{(x,h)}$, $ ( b , g ) \in \cC_{(y,g)}$, and $ ( c , k ) \in \cC_{(z,k)}$, such that $r_\cG(y)=s_\cH(h)$ and $r_\cG(z)=s_\cH(g)$. By definition,
\[\bigl( ( a , h )  ( b , g ) \bigr)\, ( c , k )=\bigl(a\beta_h(b)\beta_{h|_y g}(c) , (h|_y g)|_z k\bigr).\]
On the other hand, 
\[ ( a , h )\, \bigl( ( b , g )  ( c , k ) \bigr)=\bigl(a\beta_h(b\beta_g(c)) , h|_{y(g\cdot z)} g|_z k\bigr).\]
By \condref{ZS4} and \condref{ZS3}, 
\[(h|_y g)|_z k = (h|_y)|_{g\cdot z} g|_z k = h|_{y(g\cdot z)} g|_z k.\]
By \condref{A4}, 
\[a\beta_h(b\beta_g(c))=a\beta_h(b)\beta_{h|_y}(\beta_g(c))=a\beta_h(b)\beta_{h|_y g}(c).\]
Therefore, the multiplication is associative. 

For \condref{F4}: For $ ( a , h ) \in \cC_{(x,h)}$ and $ ( b , g ) \in \cC_{(y,g)}$ such that $r_\cG(y)=s_\cH(h)$, we have 
\begin{align*}
\| ( a , h )  ( b , g ) \| &= \|(a\beta_h(b) , h|_y g)\| \\
&=\|a\beta_h(b)\| \\
&\leq \|a\|\|\beta_h(b)\| \\
&=\|a\|\|b\| = \| ( a , h ) \|\|(b,g)\|.
\end{align*}
Here, we applied Corollary~\ref{cor.iso} which stated that $\|\beta_h(b)\|=\|b\|$.

For \condref{F5}: For $ ( a , h ) \in \cC_{(x,h)}$, we have
\[ ( a , h ) ^* = \bigl(\beta_{h^{-1}}(a^*), h^{-1}|_{x^{-1}}\bigr) \in \cC_{(h^{-1}\cdot x^{-1}, h^{-1}|_{x^{-1}})} = \cC_{(x,h)^{-1}}.\]

For \condref{F6}: Since $\beta_{h^{-1}}$ is linear, we have  
\[(a+\lambda b,  h)^*= \bigl(\beta_{h^{-1}}(a^*+\overline{\lambda}b^*), h^{-1}|_{x^{-1}}\bigr) =  ( a , h ) ^* + \overline{\lambda}(b, h)^*.\]

For \condref{F7}: Take $ ( a , h ) \in \cC_{(x,h)}$ and $ ( b , g ) \in \cC_{(y,g)}$ such that $r_\cG(y)=s_\cH(h)$. One can compute:
\[( ( a , h )  ( b , g ) )^*=\bigl( \beta_{(h|_y g)^{-1}}(\beta_h(b)^* a^*) , (h|_y g)^{-1}|_{(x(h\cdot y))^{-1}}\bigr),\]
and
\[ ( b , g ) ^*  ( a , h ) ^*=
\Bigl(\beta_{g^{-1}}(b^*) \beta_{g^{-1}|_{y^{-1}}}(\beta_h^{-1}(a^*)) , ({g^{-1}|_{y^{-1}}})|_{h^{-1}\cdot  x^{-1}} h^{-1}|_{x^{-1}}\Bigr).\]
By \condref{A5}, $\beta_h(b)^*=\beta_{h|_y}(b^*)$. By Lemma \ref{lm.ZS}, $(h|_y)^{-1}=h^{-1}|_{h\cdot y}$ and $(h\cdot y)^{-1}=h|_y y^{-1}$. Therefore,
\begin{align*}
\beta_{(h|_y g)^{-1}}(\beta_h(b)^* a^*) &= \beta_{(h|_y g)^{-1}}(\beta_{h|_y}(b^*) a^*) \\
&= \beta_{(h|_y g)^{-1}}(\beta_{h|_y}(b^*)) \beta_{(h|_y g)^{-1}|_{h|_y\cdot y^{-1}}}(a^*) \\
&=\beta_{g^{-1}}(b^*) \beta_{(g^{-1}h^{-1}|_{h\cdot y})|_{(h\cdot y)^{-1}}}(a^*) \\
&=\beta_{g^{-1}}(b^*) \beta_{g^{-1}|_{(h^{-1}|_{h\cdot y})\cdot (h\cdot y)^{-1}} (h^{-1}|_{h\cdot y})|_{(h\cdot y)^{-1}}}(a^*) \\
&=\beta_{g^{-1}}(b^*) \beta_{g^{-1}|_{(h|_y)^{-1}\cdot h|_y \cdot y^{-1}} h^{-1}|_{r_\cH(h)}}(a^*) \\
&= \beta_{g^{-1}}(b^*) \beta_{g^{-1}|_{y^{-1}}}(\beta_{h^{-1}}(a^*))
.
\end{align*}
Moreover, 
\begin{align*}
(h|_y g)^{-1}|_{(x(h\cdot y))^{-1}} &= g^{-1}|_{(h|_y)^{-1}\cdot (x(h\cdot y))^{-1}} (h|_y)^{-1}|_{(x(h\cdot y))^{-1}} \\
&= g^{-1}|_{(h|_y)^{-1}\cdot (h|_y\cdot y^{-1}) x^{-1}} (h^{-1}|_{h\cdot y})|_{(h\cdot y)^{-1}x^{-1}} \\
&= g^{-1}|_{y^{-1} (h|_{h\cdot y})|_{(h\cdot y)^{-1}}\cdot x^{-1}} h^{-1}|_{x^{-1}} \\ 
&= g^{-1}|_{y^{-1} (h^{-1}|_{r_\cH(h)}\cdot x^{-1})} h^{-1}|_{x^{-1}} \\ 
&= (g^{-1}|_{y^{-1}})|_{h^{-1}\cdot x^{-1}}  h^{-1}|_{x^{-1}}. \\ 
\end{align*}
Therefore,
\[( ( a , h )  ( b , g ) )^*= ( b , g ) ^*  ( a , h ) ^*.\]

For \condref{F8}: Take any $ ( a , h ) \in \cC_{x,h}$, one can compute that
\[ ( a , h ) ^{**} = \Bigl(\beta_{(h^{-1}|_{x^{-1}})^{-1}} ( \beta_{h^{-1}} (a^*)^* ) , (h^{-1}|_{x^{-1}})^{-1}|_{(h^{-1}\cdot x^{-1})^{-1}}\Bigr).\]
By \condref{A5}, 
\[\beta_{h^{-1}} (a^*)^* = \beta_{h^{-1}|_{x^{-1}}}(a).\]
Therefore, 
\begin{align*}
\beta_{(h^{-1}|_{x^{-1}})^{-1}} ( \beta_{h^{-1}} (a^*)^* ) &= \beta_{(h^{-1}|_{x^{-1}})^{-1}} ( \beta_{h^{-1}|_{x^{-1}}}(a) ) \\
&= \beta_{r_\cG(x)}(a)=a.
\end{align*}
Moreover, 
\[(h^{-1}|_{x^{-1}})^{-1}|_{(h^{-1}\cdot x^{-1})^{-1}} = (h|_{h^{-1} \cdot x^{-1}})|_{(h^{-1}\cdot x^{-1})^{-1}}=h.\]
Hence, $( a , h ) ^{**}= ( a , h )$.

For \condref{F9}: For $ ( a , h ) \in \cC_{x,h}$, we have
\[ ( a , h ) ^*  ( a , h )  = \bigl(\beta_{h^{-1}}(a^*) \beta_{h^{-1}|_{x^{-1}}}(a) , s_\cH(h)\bigr)=\bigl(\beta_{h^{-1}}(a^*a), s_\cH(h)\bigr).\] 
Note that $a^*a\in \cB_{s_\cG(x)}=\cB_{r_\cH(h)}=\cB_{s_\cH(h^{-1})}$. By Proposition \ref{prop.A6}, $\beta_{h^{-1}}$ is isometric on  $\cB_{s_\cH(h^{-1})}$. Therefore, 
\[\| ( a , h ) ^*  ( a , h ) \|=\|\beta_{h^{-1}}(a^*a)\|=\|a^*a\|=\| ( a , h ) \|^2.\]
Moreover, 
\[\|a^*a\|=\|aa^*\|=\| ( a , h ) ^{**}  ( a , h ) ^*\|=\| ( a , h ) ^*\|^2.\]

Finally, for \condref{F10}: We have shown that 
\[ ( a , h ) ^*  ( a , h )  =\bigl( \beta_{h^{-1}}(a^*a),s_\cH(h)\bigr).\] 
Since $\beta_{h^{-1}}$ is a $*$-automorphism on $\cB_{s_\cH(h^{-1})}$, we have $\beta_{h^{-1}}(a^*a)\geq 0$ and thus by Remark~\ref{rm.pos}, \[ ( a , h ) ^*  ( a , h ) \geq 0. \qedhere\]
\end{proof} 

\begin{example}\label{ex:if-H-is-trivial}
    If $\cH=\cG\z$ is trivial, so that $\cG\bowtie\cG\z \cong \cG$ via $(x,s_{\cG}(x))\mapsto x$, then  the trivial $\cG\z$-action on some Fell bundle $\cB$ over $\cG$, defined by $\beta(r_{\cB}(b),b)=b$ for all $b\in\cB$, is clearly $(\cG,\cG\z)$-compatible, and $\cB\bowtie\cG\z \cong \cB$ via $(b,s_{\cG}(b)) \mapsto b$.
\end{example} 

\begin{example}\label{ex.ZSbundle} On the other hand, if $(\cG,\cH)$ is a matched pair of \etale groupoids, then the groupoid Fell bundle
$\cB(\cG):=(\mathbb{C}\times \cG, \mathrm{pr}_{\cG})$ carries a canonical $\cH$-action $\beta$: for $h\in \cH$ and $x\in\cG$ with $r_\cG(x)=s_\cH(h)$, define $\beta(h,( z ,x))=( z ,h\cdot x)$. This is clearly linear and continuous, and one can easily verify that $\beta$ satisfies conditions \condref{A1} through \condref{A5}. 

By Theorem \ref{thm.bundle}, the bundle $\cC$ with fibres
\[\cC_{(x,h)} = \{(( z ,x), h): ( z ,x)\in\cB_{x}, r_\cH(h)=s_\cG(x)\}\]
for $(x,h)\in \cG\bowtie\cH$, with multiplication
\[(( z_{1} ,x), h)(( z_{2}  ,y), g) = ((z_{1}z_{2},x h\cdot y), h|_y g),\]
and with involution
\[(( z ,x), h)^* = ((\overline{ z }, h^{-1}\cdot x^{-1}), h^{-1}|_{x^{-1}}),\]
is a Fell bundle over $\cG\bowtie\cH$. Observe that the map $(( z ,x),h)\mapsto ( z , (x,h))$ defines an isometric isomorphism between the Fell bundle $\cC=\cB(\cG)\bowtie_\beta \cH$ and the groupoid Fell bundle $\cB(\cG\bowtie\cH)$. 
\end{example} 

\begin{example}\label{ex.KMWQ}
    Let $\cG$ be a locally compact Hausdorff groupoid and $H$ a locally compact group. Suppose there is a continuous left action of $H$ on $\cG$, denoted by $(h,x)\mapsto h\ast x$.  Recall the {\em semi-direct product groupoid} $\cG\rtimes H$ considered by  Kaliszewski, Muhly, Quigg, and Williams in \cite{KMQW2010},
\[\cG\rtimes H=\{(x,h): x\in \cG, h\in H\},\]
with multiplication $(x,h)(y,k)=(x(h\ast y), hk)$ whenever $s_\cG(x)=h\ast r_\cG(y)$ and with inverse $(x,h)^{-1}=(h^{-1}\ast x^{-1}, h^{-1})$ (the reader should compare this construction to that of the self-similar groupoid in Example~\ref{ex.ssgroupoid}). The notion of a semi-direct product of a Fell bundle $\cB=(B,p)$ over $\cG$ by a group $H$ in \cite{KMQW2010} is related to our construction as follows.   
 
In \cite{KMQW2010}, an {\em $H$-action} on $\cB$ is defined as a homomorphism $\alpha\colon H\to \operatorname{Aut}(\cB)$ such that $H\times B\ni (h,b)\mapsto \alpha_h(b) \in B$ is continuous and $p(\alpha_h(b))=h\ast p(b)$ for all $h\in H$ and $b\in B$. They form the {\em semi-direct product Fell bundle}
\[\cB\rtimes_\alpha H=\{(b,h): b\in B, h\in H\}\]
with multiplication $(b,h)(c,k)=(b\alpha_h(c), hk)$ whenever $s_\cB(b)=r_\cG(h\ast p(c))$ and with involution $(b,h)^*=(\alpha_{h^{-1}}(b)^*, h^{-1})$. They proved that $\cB\rtimes_\alpha H$ is a Fell bundle over the semi-direct product groupoid $\cG\rtimes H$ \cite[Proposition 6.2]{KMQW2010}.

In the special case when $\cG$ is \etale and $H$ is discrete, one can verify that this construction coincides with ours in the following way.
 Let $\cH=\cG\z \rtimes H=\{(u,h): u\in \cG\z , h\in H\}$ be the transformation groupoid of the $H$-action restricted to $\cG\z$ with multiplication $(u,h)(v,k)=(u,hk)$ whenever $v=h^{-1}\ast u$ and with inverse $(u,h)^{-1}=(h^{-1}\ast u, h^{-1})$. Then $(\cG,\cH)$ is a matched pair of \etale groupoids where the $\cH$-action on $\cG$ is given by $(u,h)\cdot x:=h\ast x$ and the $\cG$-restriction on $\cH$ is given by $(u,h)|_x:=(h\ast s(x),h)$.
Moreover, the \ZS product groupoid $\cG\bowtie \cH$ coincides with the semi-direct product groupoid $\cG\rtimes H$. The $H$-action $\alpha$ can be lifted to a $(\cG,\cH)$-compatible $\cH$-action $\beta$ by setting $\beta_{(u,h)}(b)=\alpha_h(b)$. Furthermore, the \ZS product Fell bundle $\cB\bowtie_\beta \cH$ is isomorphic to the semi-direct product Fell bundle $\cB\rtimes_\alpha H$. 
\end{example}

We note that, in general, the Fell bundle $\cB$ is preserved isometrically inside $\cB\bowtie_\beta \cH$, but that we must make an additional assumption in order for it to also contain a copy of $\cB(\cH)$.

\begin{proposition}\label{prop.embed} 
    Define $\Phi\colon B\to B\tensor*[_{s_{\cB}}]{\times}{_{r_{\cH}}} \cH$ by $\Phi(b)=(b, s_\cB(b))$. Then $\Phi$ is an isometric homomorphism $\cB\to\cB\bowtie_{\beta}\cH$ of Fell bundles, covariant with the embedding $\cG\to \cG\bowtie \cH$. 
    Furthermore, $\Phi(\cB_{x})=(\cB\bowtie_{\beta}\cH)_{(x, s_\cG(x))}$.
    
    If we assume that $\cB_u$ is unital for all $u\in \cG\z$, then we may further define $\Psi\colon \mathbb{C}\times \cH \to  B \tensor*[_{s_{\cB}}]{\times}{_{r_{\cH}}} \cH$ by $\Psi(z,h)=(z 1_{u}, h)$ where $u=r_{\cH}(h)$. Then $\Psi$ is an isometric homomorphism $\cB(\cH) \to \cB\bowtie_\beta \cH$ of Fell bundles, covariant with the embedding $\cH\to \cG\bowtie \cH$.
\end{proposition}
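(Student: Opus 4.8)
The plan is to handle $\Phi$ and $\Psi$ separately, since only the second requires unitality, and to first record the two covariance homomorphisms explicitly: the embedding $\cG\hookrightarrow\cG\bowtie\cH$ should be $x\mapsto (x,s_\cG(x))$ and the embedding $\cH\hookrightarrow\cG\bowtie\cH$ should be $h\mapsto (r_\cH(h),h)$. A preliminary check is that these are continuous groupoid homomorphisms; continuity is clear from continuity of $s_\cG$ and $r_\cH$, and multiplicativity is a short computation using \condref{ZS8} and \ref{item:restriction-of-unit-is-unit} for the first map and \condref{ZS9} and \ref{item:acting-on-unit-is-unit} for the second (the point being that a source or range is a unit, so $s_\cH$ and $r_\cH$ fix it and the action and restriction degenerate).

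For $\Phi$, I would first note it is well-defined into $C$: since $s_\cB(b)\in\cG\z=\cH\z$ is a unit, $r_\cH(s_\cB(b))=s_\cB(b)$, so $(b,s_\cB(b))\in C$, and continuity is immediate from that of $s_\cB$. Condition (H1) together with the image identity $\Phi(\cB_x)=\cC_{(x,s_\cG(x))}$ follows by computing $q(\Phi(b))=(p(b),s_\cG(p(b)))$ and observing that any element of $\cC_{(x,s_\cG(x))}$ is forced to have second coordinate $s_\cG(x)$, so that $\Phi$ is onto that fibre. The one substantive check is multiplicativity (H2): expanding $\Phi(b)\bullet\Phi(c)$ via \ref{cond.C2} and using that $\beta_{s_\cB(b)}$ is the identity by \condref{A3} (as $s_\cB(b)$ is a unit) and that $s_\cB(b)|_{p(c)}=r_\cG(p(c))|_{p(c)}=s_\cG(p(c))$ by \ref{item:restriction-of-unit-is-unit}, one recovers $\Phi(bc)$. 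The involution identity (H3) is entirely analogous, again via \condref{A3} and \ref{item:restriction-of-unit-is-unit}, and isometry is immediate since each fibre $\cC_{(x,h)}$ carries the norm of $\cB_x$.

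For $\Psi$ (under the unitality hypothesis), well-definedness holds because $z1_u\in\cB_u$ has source $u=r_\cH(h)$, so $(z1_u,h)\in C$. The algebraic conditions become routine once one invokes Proposition~\ref{prop.A6}: each $\beta_h$ is a $*$-isomorphism $\cB_{s_\cH(h)}\to\cB_{r_\cH(h)}$ and therefore sends units to units, i.e.\ $\beta_h(1_{s_\cH(h)})=1_{r_\cH(h)}$. With this, (H1) is clear, (H2) follows after simplifying the $\cH$-coordinate by \condref{ZS9} (so that $h|_{s_\cH(h)}=h$) and using that $\beta_h$ preserves the unit, (H3) follows similarly using in addition that $1_u$ is self-adjoint, and isometry is just $\|z1_u\|=|z|$.

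The hard part will be the continuity of $\Psi$. Unwinding the definitions, $\Psi$ is continuous exactly when $h\mapsto z1_{r_\cH(h)}$ is continuous into $B$, which in turn reduces to continuity of the unit section $u\mapsto 1_u$ on $\cG\z$; granting that, continuity of $\Psi$ follows by composing with the continuous scalar multiplication \ref{cond.B-times} and pairing with $h\mapsto h$. This reduction is where the real content lies, because for a general upper semi-continuous $C^*$-bundle unital fibres do \emph{not} force a continuous unit section. The argument must therefore invoke continuity of $u\mapsto 1_u$; this is a genuine additional requirement that should be read into (or shown to follow from) the unitality hypothesis on $\cB$, since without it $\Psi$ need not be continuous even though all fibres are unital. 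Everything else is a direct verification in the style of the proof of Theorem~\ref{thm.bundle}.
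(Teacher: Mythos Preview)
Your outline matches the paper's proof essentially step for step: the same reductions via \condref{A3}, \ref{item:restriction-of-unit-is-unit}, \condref{ZS8}, \condref{ZS9}, and Proposition~\ref{prop.A6} (for $\beta_h(1_{s_\cH(h)})=1_{r_\cH(h)}$) are exactly what the paper uses, and the fibrewise isometry comes, as you say, straight from the definition of the norm on $\cC_{(x,h)}$.

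The one place where you go beyond the paper is your discussion of the continuity of $\Psi$. You are right that this reduces to continuity of the unit section $u\mapsto 1_u$ over $\cG\z$, and that for a general upper semi-continuous $C^*$-bundle with unital fibres this is not automatic. The paper simply asserts ``The map $\Psi$ is continuous'' without further comment, so you have identified a point the paper glosses over rather than a defect in your own argument; the same implicit assumption reappears in the paper's proof of Lemma~\ref{lem:Psi}. Treating continuity of the unit section as part of the standing unitality hypothesis (or as an additional assumption to be stated) is the honest resolution, and your proposal is correct once that is granted.
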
 

\begin{proof} 
    Let $\cC:=\cB\bowtie_{\beta}\cH$.
    Since $r_\cH(s_\cB(b))=s_\cB(b)$, 
    $\Phi$ indeed takes values in $B\tensor*[_{s_{\cB}}]{\times}{_{r_{\cH}}} \cH$. Clearly $\Phi$ is continuous and restricts to an isometric isomorphism between the Banach spaces $\cB_{x}$ and $\cC_{(x,s_\cG(x))}$, since the norm on the latter is given by
    \[
    \|(b, s_\cB(b))\|_{\cC_{q(\Phi(b))}}=\|b\|_{\cB_{p(b)}}.\]
    Using \condref{ZS8}, \condref{ZS9}, \ref{item:restriction-of-unit-is-unit}, and the fact that $\beta_u$ is the identity map, one can easily verify that $\Phi$ is $*$-preserving and multiplicative.

The map $\Psi$ is continuous and by construction covariant with the embedding. Since the fibre $\cC_{(u,h)}$
inherits its Banach space structure from $\cB_u$, $\Psi$ restricts to a linear and isometric map between the fibres. To see that $\Psi$ is multiplicative, let $\bigl((z,h),(w,k)
\bigr)\in\cB(\cH)^{(2)}$, i.e.\ $v:=s_{\cH}(h)=r_{\cH}(k)$. If $u:=r_{\cH}(h)$, then
\[
\bigl(    \Psi (z,h), \Psi (w,k)
\bigr) =\bigl(   (z 1_{u},h), (w1_{v},k)
\bigr)
\]
is in $\cC^{(2)}$, and according to Condition~\ref{cond.C2}, their product is given by 
\[
        \Psi (z,h) \bullet \Psi (w,k)
 = (z 1_{u}\, \beta_h(w1_{v}), h|_{p(w1_{v})} k).
\]
Since $\beta_h \colon \cB_{v} \to \cB_{u}$ is automorphic on $\cB_{s_\cH(h)}$ and thus unital
and since $h|_{p(w1_{v})}=h|_{v}=h$ by \ref{ZSG-restriction-unit}, we see that
\[
      \Psi (z,h)\bullet \Psi (w,k)
  = ((zw) 1_{u}, hk) = \Psi \bigl((z,h)(w,k)
\bigr).
\]
For the same reasons, we also have $\Psi(z,h)^*=\Psi(\overline{z}, h^{-1})=\Psi((z,h)^*)$. 
\end{proof}

We would like to analyze under which conditions a Fell bundle $\cC=(C,q)$ over the \ZS product groupoid $\cK=\cG\bowtie \cH$ can be decomposed as a \ZS product $\cB\bowtie_\beta \cH$ of some Fell bundle $\cB$ over $\cG$ with $(\cG,\cH)$-compatible $\cH$-action $\beta$. First, the map $\iota\colon\cG\to\cK=\cG\bowtie\cH$ given by $\iota(x)=(x,s_\cG(x))$ defines a continuous groupoid homomorphism, and thus the pullback bundle $\cB=\iota^*(\cC)=\{(x,c)\in\cG\times \cC\colon\iota(x)=q(c)\}$ with the map $p\colon\cB\to\cG$, $p(x,c)=x$ is a Fell bundle over $\cG$ (\cite[Remark 2.6]{Kumjian1998}). Here, each fibre $\cB_x$ can be identified as $\cC_{\iota(x)}=\cC_{(x,s_\cG(x))}$.

\begin{definition} Let $\cC=(C,q)$ be a Fell bundle over $\cK$ and $\cH$ a wide subgroupoid of $\cK$ , let $\j\colon \cH\to\cK$ denote the inclusion. We call a continuous section $u\colon \cH \to \j^*(\cC)$ of the pullback bundle $\j^*(\cC)$ an {\em $\cH$-unitary family}  
in $\cC$ if 
\begin{enumerate}[label=\textup{(U\arabic*)}]
    \item For any $(h,k)\in\cH^{(2)}$, $u_h u_k=u_{hk}$ and $u_h^* = u_{h^{-1}}$.
    \item For each $v\in\cH\z$, $u_v=1_v$ is the identity on $\cC_v$. 
\end{enumerate}
\end{definition}

\begin{proposition}\label{prop:decomposed-beta} Let $\cC=(C,q)$ be a Fell bundle over $\cK=\cG\bowtie \cH$ and suppose that there exists an $\cH$-unitary family $u$ \textup(where we identify $h\in\cH$ with $(r_\cH(h),h)\in\cK$\textup). Let $\cB=(B,p)=\iota^*(\cC)$ be the pullback bundle along the inclusion $\iota\colon\cG\to\cG\bowtie\cH$.
For any $(h,x)\in \cH\tensor*[_{s_{\cH}}]{\times}{_{r_{\cB}}}B$, define $\beta_h\colon\cB_x\to\cB_{h\cdot x}$ by 
\[\beta_h(a) = u_h a u_{h|_x}^*.\]
Then $\beta$ is a $(\cG,\cH)$-compatible $\cH$-action on $\cB$.
\end{proposition}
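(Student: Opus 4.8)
The plan is to verify the five conditions \condref{A1}--\condref{A5} of Definition~\ref{df.beta} together with continuity, working throughout under the identifications $\cB_x=\cC_{(x,s_\cG(x))}$ (so that multiplication and involution on $\cB$ are those inherited from $\cC$) and $u_h\in\cC_{(r_\cH(h),h)}$. The recurrent task is fibre bookkeeping in $\cK$. For \condref{A1}, I would first check that, for $(h,x)$ in the domain (so $s_\cH(h)=r_\cG(x)$), the product $u_h\,a$ lands in $\cC_{(h\cdot x,\,h|_x)}$ and that, after multiplying by $u_{h|_x}^*=u_{(h|_x)^{-1}}\in\cC_{(s_\cG(x),\,(h|_x)^{-1})}$, one gets $\beta_h(a)=u_h\,a\,u_{h|_x}^*\in\cC_{(h\cdot x,\,s_\cG(h\cdot x))}=\cB_{h\cdot x}$; this is a direct computation using \condref{ZS5}, \condref{ZS6}, \condref{ZS7}, \condref{ZS9}, and \ref{item:acting-on-unit-is-unit}. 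Linearity of $\beta_h$ is immediate from bilinearity of the multiplication on $\cC$, and continuity of $\beta$ follows at once because $h\mapsto u_h$, the restriction map $(h,x)\mapsto h|_x$, and multiplication and involution on $\cC$ are all continuous.

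The three algebraic identities \condref{A2}, \condref{A4}, \condref{A5} each reduce to a unitary identity combined with one \ZS axiom. For \condref{A2}, writing $u_{gh}=u_g u_h$ and, via \condref{ZS4}, $u_{(gh)|_x}=u_{g|_{h\cdot x}}\,u_{h|_x}$, one regroups to obtain $\beta_{gh}(a)=u_g\bigl(u_h\,a\,u_{h|_x}^*\bigr)u_{g|_{h\cdot x}}^*=\beta_g(\beta_h(a))$. For \condref{A4}, the engine is the cancellation $u_{h|_x}^*\,u_{h|_x}=u_{(h|_x)^{-1}h|_x}=u_{s_\cG(x)}=1_{s_\cG(x)}$; inserting this between $\beta_h(b)$ and $\beta_{h|_x}(c)$ and using \condref{ZS3} in the form $h|_{xy}=(h|_x)|_y$ collapses the product to $u_h(bc)u_{h|_{xy}}^*=\beta_h(bc)$. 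For \condref{A5}, applying \condref{F7} (and \condref{F8}) gives $\beta_h(b)^*=u_{h|_x}\,b^*\,u_h^*$, and the point is to recognise the trailing unitary: since $r_\cG(x)=s_\cH(h)$, one has $(h|_x)|_{x^{-1}}=h|_{xx^{-1}}=h|_{r_\cG(x)}=h$ by \condref{ZS3} and \condref{ZS9}, so $u_{(h|_x)|_{x^{-1}}}^*=u_h^*$ and the right-hand side is exactly $\beta_{h|_x}(b^*)$.

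The one point that goes beyond routine bookkeeping --- and the step I expect to be the main obstacle --- is a fact used silently in \condref{A3} and in the cancellation above: the unit $1_v=u_v$ of the $C^*$-algebra $\cC_v$ must act as a \emph{two-sided identity} on each adjacent fibre $\cC_g$ with $r(g)=v$ or $s(g)=v$. This is not automatic for an arbitrary Fell bundle, but it follows from a short $C^*$-argument: for $c\in\cC_g$ with $r(g)=v$, the element $cc^*\in\cC_v$ is absorbed by $1_v$, so that $(1_v c-c)(1_v c-c)^*=0$, and then \condref{F9} forces $1_v c=c$; the right-handed statement is symmetric via $c^*c\in\cC_{s(g)}$. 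Granting this, \condref{A3} follows immediately: for $v\in\cH\z$ and $x$ with $r_\cG(x)=v$ one has $v|_x=s_\cG(x)$ by \ref{item:restriction-of-unit-is-unit}, whence $\beta_v(a)=1_v\,a\,1_{s_\cG(x)}=a$.
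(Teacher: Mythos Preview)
Your proposal is correct and follows essentially the same route as the paper's proof: the same fibre bookkeeping for well-definedness, the same use of \condref{ZS4} for \condref{A2}, the same cancellation-plus-\condref{ZS3} for \condref{A4}, and the same identity $(h|_x)|_{x^{-1}}=h$ for \condref{A5}. You are in fact slightly more careful than the paper on one point: the paper's proof of \condref{A3} simply asserts that $\beta_v$ is ``clearly the identity map'' once $u_v=1_v$, and its proof of \condref{A4} silently uses $1_{s_\cG(x)}c=c$, whereas you explicitly supply the $C^*$-argument showing that the unit of $\cC_v$ acts as a two-sided identity on adjacent fibres.
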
 

\begin{proof} First of all, let us verify that $\beta_h$ is well-defined: by assumption, $u_h$ is an element of $\cC_{(r_\cH(h),h)}$, so that $u_{h|_x}^*=u_{(h|_x)^{-1}}\in \cC_{(s_\cH(h|_x),(h|_x)^{-1})}$. Since $s_\cH(h)=r_\cG(x)$ and $s_\cH(h|_x)=s_\cG(x)$ by \ref{cond.ZS5}, we have for $a\in \cB_x=\cC_{(x,s_\cG(x))}$ that $ u_h a u_{h|_x}^*$ is well-defined and is an element of the fibre of $\cC$ over
\[
    (r_\cH(h),h)(x,s_\cG(x))(s_\cG(x),(h|_x)^{-1})=(h\cdot x,h|_x (h|_x)^{-1})=(h\cdot x, s_\cG(h\cdot x)).
\]
Let us now verify that $\beta$ satisfies Definition \ref{df.beta}.
For \ref{cond.A1}, since $h\mapsto u_h$ is continuous and multiplication is continuous, we have that $\beta\colon (h,a)\mapsto u_h a u_{h|_x}^*$ is continuous. 

For \ref{cond.A2}, pick any $(g,h)\in\cH^{(2)}$, any $x\in\cG$ with $s_\cH(h)=r_\cG(x)$, and any element $b\in \cB_x$. Then
\[\beta_{gh}(b)=u_{gh} b u_{(gh)|_x}^*=u_g u_h b u_{h|_x}^* u_{g|_{h\cdot x}}^*=u_g \beta_h(b) u_{g|_{h\cdot x}}^* = \beta_g(\beta_h(b)).\]

For \ref{cond.A3}, for each $v\in \cH\z$, we have $u_v=1_v$, and so $\beta_v$ is clearly the identity map.

For \ref{cond.A4}, for any $(b,c)\in\cB^{(2)}$ such that $(h,bc)\in \cH\tensor*[_{s_{\cH}}]{\times}{_{r_{\cB}}} B$, we have
\[\beta_h(bc)=u_h (bc) u_{h|_{p(b)p(c)}}^* = u_h b u_{h|_{p(b)}}^* u_{h|_{p(b)}} c u_{(h|_{p(b)})|_{p(c)}}^*=\beta_h(b)\beta_{h|_{p(b)}}(c).\]

For \ref{cond.A5}, for any $b\in \cB_x$ with $r_\cG(x)=s_\cH(h)$, we have
\[\beta_h(b)^* = (u_h b u_{h|_x}^*)^*=u_{h|_x} b^* u_h^*.\]
Since $b^*\in \cB_{x^{-1}}$ and $(h|_x)|_{x^{-1}}=h|_{r_\cG(x)}=h$, we conclude that
\[\beta_h(b)^* =u_{h|_x} b^* u_h^*=\beta_{h|_x}(b^*). \qedhere\]
\end{proof}

We now prove that the \ZS product bundle $\cB\bowtie_\beta \cH$ is isometrically $*$-isomorphic to the original bundle $\cC$ over $\cG\bowtie\cH$. 

\begin{theorem}\label{thm.internalZS} Let $\cC=(C,q)$ be a Fell bundle over $\cG\bowtie \cH$ and suppose that there exists an $\cH$-unitary family $u$. With $\cB=(B,p)=\iota^* (\cC)$ the pull back bundle along $\iota\colon \cG\to \cG\bowtie \cH$, define $\Theta\colon B\tensor*[_{s_{\cB}}]{\times}{_{r_\cH}} \cH \to C$ by 
$\Theta(a,h)=au_h$. Then $\Theta$ is an isometric isomorphism $\cB\bowtie_\beta\cH\to\cC$ of Fell bundles over $\cG\bowtie\cH$, where $\beta$ is as defined in Proposition~\ref{prop:decomposed-beta}. 
\end{theorem}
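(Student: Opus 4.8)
The plan is to show that $\Theta$ is a continuous, fibrewise bijective, isometric $*$-homomorphism that is covariant with the identity on $\cG\bowtie\cH$, and that $\Theta\inv$ is continuous. Since Theorem~\ref{thm.bundle} already makes $\cB\bowtie_\beta\cH$ a Fell bundle and Proposition~\ref{prop:decomposed-beta} makes $\beta$ a compatible action, the work is entirely bookkeeping with fibres together with the cancellation relations (U1) and (U2) of the unitary family $u$.

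First I would check that $\Theta$ is well defined and fibre preserving. For $(a,h)$ in the domain we have $a\in\cB_x=\cC_{(x,s_\cG(x))}$ with $s_\cG(x)=r_\cH(h)$ and $u_h\in\cC_{(r_\cH(h),h)}$, so the two fibres are composable in $\cG\bowtie\cH$ and $au_h$ is defined in $\cC$. A short computation with the multiplication on $\cG\bowtie\cH$, using \ref{item:acting-on-unit-is-unit} and \ref{item:restriction-of-unit-is-unit}, gives $(x,s_\cG(x))(r_\cH(h),h)=(x,h)$, so $\Theta$ maps the fibre $(\cB\bowtie_\beta\cH)_{(x,h)}$ into $\cC_{(x,h)}$ and is therefore covariant with the identity. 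Continuity of $\Theta$ is immediate from continuity of the section $h\mapsto u_h$ and of multiplication in $\cC$.

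Next I would establish the isometry and the two algebraic properties. For the isometry, the $C^*$-identity \ref{cond.F9} in $\cC$ gives $\|au_h\|^2=\|u_h^*a^*au_h\|$; using $u_h^*=u_{h\inv}$ and $u_{h\inv}u_h=u_{s_\cH(h)}=1_{s_\cH(h)}$ one identifies $u_h^*(a^*a)u_h$ with $\beta_{h\inv}(a^*a)$, which has the same norm as $a^*a$ because $\beta_{h\inv}$ is an isometric $*$-isomorphism of the unit fibres by Proposition~\ref{prop.A6}; hence $\|au_h\|=\|a\|$. For multiplicativity I would expand $\Theta\bigl((a,g)\bullet(b,h)\bigr)=\bigl(a\beta_g(b)\bigr)u_{g|_y h}$ with $y=p(b)$, substitute $\beta_g(b)=u_g b u_{g|_y}^*$ and $u_{g|_y h}=u_{g|_y}u_h$ (valid since $s_\cH(g|_y)=s_\cG(y)=r_\cH(h)$), and use $u_{g|_y}^*u_{g|_y}=1_{s_\cG(y)}$ together with $b\cdot 1_{s_\cG(y)}=b$ to reach $au_g\,bu_h=\Theta(a,g)\,\Theta(b,h)$. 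The $*$-property is analogous: expanding $\Theta\bigl((a,h)^*\bigr)=\beta_{h\inv}(a^*)u_{h\inv|_{x\inv}}$, substituting $\beta_{h\inv}(a^*)=u_{h\inv}a^*u_{h\inv|_{x\inv}}^*$, and cancelling $u_{h\inv|_{x\inv}}^*u_{h\inv|_{x\inv}}=1_{r_\cG(x)}$ collapses the expression to $u_{h\inv}a^*=(au_h)^*=\Theta(a,h)^*$.

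Finally I would treat bijectivity and the inverse. Injectivity on each fibre is immediate from the isometry. For surjectivity, given $c\in\cC_{(x,h)}$ I would put $a:=cu_{h\inv}$ and verify, again via the multiplication on $\cG\bowtie\cH$ with \ref{item:acting-on-unit-is-unit} and \ref{cond.ZS9}, that $a\in\cC_{(x,s_\cG(x))}=\cB_x$ and $\Theta(a,h)=cu_{h\inv}u_h=c\cdot 1_{s_\cH(h)}=c$; this simultaneously exhibits $\Theta\inv(c)=\bigl(c\,u_{h\inv},h\bigr)$, where $h$ denotes the $\cH$-coordinate of $q(c)$, whose continuity follows from continuity of $q$, of inversion in $\cH$, of the section $u$, and of multiplication in $\cC$. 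I expect the only real obstacle to be the careful fibre bookkeeping---checking that every product is composable and lands in the claimed fibre---rather than any conceptual difficulty, since all the required cancellations are forced by (U1) and (U2).
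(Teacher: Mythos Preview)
Your proposal is correct and follows essentially the same approach as the paper. The only cosmetic differences are that the paper verifies multiplicativity and the $*$-property by inserting $u_{h|_y}^* u_{h|_y}$ into $\Theta(a,g)\Theta(b,h)$ rather than expanding $\Theta\bigl((a,g)\bullet(b,h)\bigr)$, and it proves the isometry more directly via $\|au_h\|^2=\|au_h u_h^* a^*\|=\|aa^*\|$ without passing through $\beta_{h\inv}$; your extra check that $\Theta\inv$ is continuous is harmless and arguably an improvement.
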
 

\begin{proof} 
    First of all, $\Theta$ clearly preserves fibres and is fibrewise linear. It is further continuous, by definition of the topology on the pullback bundle $\cB$, by the assumption that $h\mapsto u_h$ is continuous, and since multiplication on $\cC$ is continuous.
To see that $\Theta$ is multiplicative, take any $(a,h),(b,g)\in(\cB\bowtie_\beta \cH)^{(2)}$. By definition, $(a,h)(b,g)=(a\beta_h(b),h|_{p(b)} g)$. On the other hand,
\begin{align*}
    \Theta((a,h))\Theta((b,g)) &=au_h bu_g  \\
    &= au_h b u_{h|_{p(b)}}^* u_{h|_{p(b)}} u_g \\
    &= a\beta_h(b) u_{h|_{p(b)}g}=\Theta((a,h)(b,g)). 
\end{align*}
To see that $\Theta$ is *-preserving, take $(a,h)\in\cB\bowtie_\beta \cH$ with $a\in\cB_x$,  so that $(a,h)^*=(\beta_h^{-1}(a^*),h^{-1}|_{x^{-1}})$. We have,
\[\Theta((a,h))^*=u_{h^{-1}} a^*=u_{h^{-1}} a^* u_{h^{-1}|_{x^{-1}}}^* u_{h^{-1}|_{x^{-1}}}=\beta_{h^{-1}}(a^*)u_{h^{-1}|_{x^{-1}}}=\Theta((a,h)^*). \]
To see that $\Theta$ is isometric, 
\[\|\Theta((a,h))\|=\|au_h\|=\|au_h u_h^* a^*\|^{1/2}=\|aa^*\|^{1/2}=\|a\|=\|(a,h)\|.\]
In particular, $\Theta$ is automatically injective. Finally, for any $c\in \cC_{(x,h)}$, 
\[c u_h^*\in\cC_{(x,h)(s_\cH(h),h^{-1})}=\cC_{(x,hh^{-1})}=\cC_{(x,s_\cG(x))}\cong \cB_x.\]
Therefore,
\[c=c u_h^* u_h = \Theta((cu_h^*, h)).\]
As a result, $\Theta$ is surjective. 
\end{proof}

As an immediate corollary, we obtain the following version of the internal \ZS product of Fell bundles.

\begin{corollary} Let $\cC$ be a Fell bundle over $\cK$, and let $\cG,\cH$ be subgroupoids of $\cK$. Let $\cB$ be the pullback bundle along the inclusion $\cG\to\cK$, and suppose that there exists an $\cH$-unitary family $u$. Suppose for each $c\in\cC$, there exist unique $b\in\cB$ and $h\in\cH$ with $c=bu_h$. Then
$\cK=\cG\bowtie\cH$ and $\cC$ is isometrically isomorphic to $\cB\bowtie_\beta \cH$ for some $(\cG,\cH)$-compatible $\cH$-action $\beta$. 
\end{corollary}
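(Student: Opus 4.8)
The plan is to reduce everything to Theorem~\ref{thm.internalZS}: once we know that the base groupoid satisfies $\cK=\cG\bowtie\cH$, the given $\cH$-unitary family $u$ and the pullback bundle $\cB=\iota^*(\cC)$ are precisely the data of that theorem, and it produces both the action $\beta$ of Proposition~\ref{prop:decomposed-beta} and the isometric isomorphism $\Theta\colon\cB\bowtie_\beta\cH\to\cC$, $\Theta(a,h)=au_h$. The only genuinely new work is therefore to deduce the groupoid identity $\cK=\cG\bowtie\cH$ from the bundle-level factorisation hypothesis. Throughout, $\cG$ and $\cH$ are understood to be wide subgroupoids of $\cK$ (as is already required for the $\cH$-unitary family and the pullback to be defined), so that $\cG\z=\cH\z=\cK\z$.

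First I would descend the factorisation to the base groupoid. Fix $k\in\cK$ and apply the hypothesis to the zero element $0_k$ of the fibre $\cC_k$: there are unique $b\in\cB$ and $h\in\cH$ with $0_k=b\,u_h$. Applying $q$ (which is multiplicative by \ref{cond.F1}) and recalling that $q(u_h)=h$ and $q(b)=\iota(p(b))$, we get $k=\iota(p(b))\,h=xh$ with $x:=p(b)\in\cG$, so that every $k\in\cK$ factors as a product of an element of $\cG$ and an element of $\cH$. For uniqueness, suppose $k=xh=x'h'$ with $x,x'\in\cG$ and $h,h'\in\cH$. Then $0_x u_h$ and $0_{x'}u_{h'}$ are both equal to the zero element $0_k$ of $\cC_k$, so the uniqueness clause forces $0_x=0_{x'}$ in the total space of $\cB$ and $h=h'$; since $0_x$ and $0_{x'}$ lie in the fibres over $x$ and $x'$, this yields $x=x'$. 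Hence the multiplication map
\[
    m\colon \cG\tensor*[_{s_\cG}]{\times}{_{r_\cH}}\cH \to \cK,\qquad m(x,h)=xh,
\]
is a bijection.

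A bijective product map between wide subgroupoids is exactly the defining condition of an internal \ZS decomposition: declaring $h\cdot x\in\cG$ and $h|_x\in\cH$ to be the unique factors of $hx=(h\cdot x)(h|_x)$ equips $(\cG,\cH)$ with an $\cH$-action and a $\cG$-restriction satisfying \ref{cond.ZS1}--\ref{cond.ZS4}, and $m$ becomes an isomorphism of \etale groupoids $\cG\bowtie\cH\cong\cK$ (this is the groupoid analogue of the classical internal \ZS product; see \cite{AA2005,BPRRW2017}). Identifying $\cK$ with $\cG\bowtie\cH$ along $m$, Theorem~\ref{thm.internalZS} applies verbatim and delivers the desired $(\cG,\cH)$-compatible action $\beta$ together with the isometric isomorphism $\cC\cong\cB\bowtie_\beta\cH$. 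I expect the only delicate point to be this last citation, namely checking that the set-theoretic bijection $m$ upgrades to an isomorphism of topological (indeed \etale) groupoids: continuity of $m$ is immediate, while continuity of the factorisation $k\mapsto(x,h)$, and hence of the resulting action and restriction maps, is precisely what the internal \ZS theory for groupoids guarantees. The descent step itself is painless, since the zero sections always exist and their unique factorisations encode exactly the factorisation of $\cK$.
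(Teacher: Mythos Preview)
Your proof is correct and follows essentially the same approach as the paper: descend the factorisation hypothesis to the base groupoid, invoke the internal \ZS product for groupoids \cite[Proposition~3.4]{BPRRW2017} to obtain $\cK=\cG\bowtie\cH$, and then apply Theorem~\ref{thm.internalZS}. The only cosmetic difference is in the uniqueness step: you factor the zero element $0_k=0_x u_h=0_{x'}u_{h'}$ and invoke the hypothesis directly, whereas the paper fixes an arbitrary $c\in\cC_k$ and observes that $c=(cu_{h'}^*)u_{h'}$ gives a second factorisation to compare with $c=bu_h$; both arguments are equally short and valid.
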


\begin{proof} We first claim that for any $k\in \cK$, there exists a unique $g\in\cG$ and $h\in\cH$ such that $(g,h)\in\cK^{(2)}$ and $k=gh$. To see there exists at least one such pair $(g,h)$, pick any $c\in\cC_k$. By assumption, there exists a unique $b\in\cB$ and $h\in \cH$ such that $c=bu_h$. As $b$ is an element of some fibre $\cB_g$, then $k=gh$. To see that such a pair is unique, suppose $k=g'h'$ for some $g'\in \cG$ and $h'\in \cH$. Then $cu_{h'}^*\in\cC_{kh'^{-1}}=\cC_g\cong \cB_g$. In this case we have $bu_h=c=(cu_{h'}^*) u_{h'}$, so the uniqueness of such decomposition implies $h'=h$ and hence $g'=g$. 

By the internal \ZS product for groupoids \cite[Proposition 3.4]{BPRRW2017}, we conclude $\cK=\cG\bowtie\cH$. The rest of the proof follows from Theorem \ref{thm.internalZS}.
\end{proof}

\section{$C^*$-algebras of Fell bundles}

For a Fell bundle $\cB$ over an \etale groupoid $\cG$, one can define a universal $C^*$-algebra $C^*(\cB)$ with respect to certain $*$-representations of the $*$-algebra $\Gamma_c (\cG;\cB)$ of continuous compactly supported sections of the bundle $\cB$. If we use the standard notation $\cG^v := r_{\cG}\inv(\{v\})$, $\cG_v := s_{\cG}\inv (\{v\})$ for an idempotent $v$ of $\cG$, then the $*$-algebra structure on $\Gamma_c (\cG;\cB)$ is given by
\begin{align}\label{eq:star-alg-structure-of-sections}
    (\sigma\square\tau) (x) &=
    \sum_{y\in \cG^{v}} \sigma(y) \cdot \tau(y\inv x),\text{ and }
    \sigma^* (x) = \sigma(x\inv)^*, 
\end{align}
where $x\in \cG^v$ and where the multiplication $\cdot$ is to be understood in $\cB$. 
Let
\[
    \norm{\sigma}_{I,r}
    :=
     \sup_{v\in\cG\z}\left(
     \sum_{x\in \cG^{v}}
    \norm{\sigma(x)}
    \right)
    \text{ and }
    \norm{\sigma}_{I,s}
    :=
    \sup_{v\in\cG\z}\left(
    \sum_{x\in \cG_{v}}
    \norm{\sigma(x)}\right).
\]
The {\em $I$-norm} on the $*$-algebra $\Gamma_c (\cG;\cB)$ is given by
\[
    \norm{\sigma}_{I}
    :=
    \max(\norm{\sigma}_{I,r}, \norm{\sigma}_{I,s}).
\]
\begin{definition}[see {\cite[Example 4.8]{MW2008}}]
    A $*$-homomorphism $L$ of $\Gamma_c(\cG;\cB)$ into the bounded operators $\mathbb{B}(H)$ on some Hilbert space $H$ is called a {\em  representation} if it is continuous when $\Gamma_c(\cG;\cB)$ has the inductive limit topology and $\mathbb{B}(H)$ the weak operator topology.
    
    Note that $L$ is continuous in this sense if it is {\em $I$-norm decreasing}, i.e.\ $\norm{L(\sigma)}\leq \norm{\sigma}_I$ for all $\sigma\in\Gamma_c (\cG;\cB)$.
\end{definition}

We point out that strict representations of $\cB$ can be integrated to representations of $\Gamma_c(\cG;\cB)$ \cite[Prop. 4.10]{MW2008}, and that the converse holds also: every nondegenerate representation of $\Gamma_c(\cG;\cB)$ is the integrated form of a strict representation \cite[Thm 4.13]{MW2008} (Note that \cite{MW2008} ask for nondegeneracy when they talk about representations; we will explicitly mention when we assume nondegeneracy.).

We define $C^*(\cB)$ to be the completion of $\Gamma_c (\cG;\cB)$ with respect to the universal norm
\[
    \norm{\sigma}
    :=
    \sup\{\norm{L(\sigma)}: L \text{ is an $I$-norm decreasing $*$-representation}\}.
\]
For a more detailed description, one may refer to \cite[Definition 16.25]{ExelFellBundle} for the case when $\cG$ is a discrete group, and \cite{Yamagami1990} or \cite{MW2008} for the case when $\cG$ is an \etale groupoid.

\begin{lemma}\label{lem:blendG}
    For $\sigma\in \Gamma_c (\cG;\cB)$ and $(x,h)\in\cG\bowtie\cH$, define
    \[
        i(\sigma)(x,h)
        :=
        \begin{cases}
        \bigl( \sigma(x), h\bigr)=\Phi(\sigma(x)), & \text{ if  $h=r_{\cH}(h)
        $ is an idempotent,}\\
        \bigl( 0_{x}, h \bigr) & \text{ otherwise},
        \end{cases}
    \]
    where we wrote $0_{x}$ for zero in the Banach space $\cB_{x}$.
    Then $i(\sigma)$ defines an element of $\Gamma_c (\cG\bowtie\cH;\cB\bowtie_\beta\cH)$.
\end{lemma}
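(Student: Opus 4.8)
The plan is to verify, in turn, the three defining properties of an element of $\Gamma_c(\cG\bowtie\cH;\cB\bowtie_\beta\cH)$: that $i(\sigma)$ is a genuine section, that it is continuous, and that it has compact support.

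First I would check that $i(\sigma)(x,h)$ lands in the correct fibre $\cC_{(x,h)}=q\inv(x,h)$. By Remark~\ref{rm.pos} this fibre consists of the pairs $(b,h)$ with $b\in\cB_{x}$, and the constraint defining $C$ (namely $s_{\cB}(b)=r_{\cH}(h)$) is automatic here: membership $(x,h)\in\cG\bowtie\cH$ forces $s_{\cG}(x)=r_{\cH}(h)$, while $s_{\cB}(\sigma(x))=s_{\cG}(x)=s_{\cB}(0_{x})$, so the source condition holds in both branches, and $\sigma(x),0_{x}\in\cB_{x}$ by construction. I would also note that the two expressions on the idempotent branch agree: when $h=r_{\cH}(h)=s_{\cG}(x)$ we have $\Phi(\sigma(x))=(\sigma(x),s_{\cB}(\sigma(x)))=(\sigma(x),h)$, so the definition is unambiguous.

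The main work is continuity, where the piecewise definition must be reconciled at the interface between units and non-units. The key observation is that $U:=\{(x,h)\in\cG\bowtie\cH : h\in\cH\z\}$ is clopen. Indeed, since $\cH$ is \etale and Hausdorff, $\cH\z$ is open (because $r_{\cH}$ is a local homeomorphism) and closed (if $h_i\in\cH\z$ with $h_i\to h$, then $r_\cH(h_i)=h_i\to h$ and $r_\cH(h_i)\to r_\cH(h)$, so $r_\cH(h)=h$ by Hausdorffness); hence $\cH\z$ is clopen and $U$ is its preimage under the continuous projection $(x,h)\mapsto h$. Moreover $U=\iota(\cG)$, since $(x,h)\in U$ forces $h=r_\cH(h)=s_\cG(x)$, and $\iota$ restricts to a homeomorphism $\cG\to U$ (its inverse is the continuous projection to $\cG$). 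On $U$ the section equals $\Phi\circ\sigma\circ\mathrm{pr}_{\cG}$, which is continuous by Proposition~\ref{prop.embed} together with continuity of $\sigma$; on the complement $U^{c}$ it is the zero section of $\cB\bowtie_\beta\cH$, which is continuous. As $\{U,U^{c}\}$ is an open cover of $\cG\bowtie\cH$, continuity on each piece yields continuity of $i(\sigma)$ globally.

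Finally, for compact support I would observe that $i(\sigma)$ vanishes off $U$, and that on $U$ it is nonzero only where $\sigma(x)\neq 0$; thus the non-vanishing set is contained in $\iota(\supp\sigma)$. Since $\supp\sigma$ is compact and $\iota$ is continuous, $\iota(\supp\sigma)$ is compact, hence closed, so $\supp i(\sigma)$ is a closed subset of a compact set and therefore compact. The one genuinely load-bearing step is the clopenness of $U$: without the \etale hypothesis (openness of $\cH\z$) and the Hausdorff hypothesis (closedness of $\cH\z$) one could not cleanly separate the two branches, and continuity at a unit approached by a net of non-units would be in doubt.
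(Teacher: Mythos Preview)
Your proof is correct and rests on the same key observation as the paper's: that $\cH\z$ is clopen in $\cH$ (open by the \etale hypothesis, closed by Hausdorffness), so the two branches of the definition never interact. The paper carries this out by a direct net argument, splitting into the cases $h\in\cH\z$ and $h\notin\cH\z$, whereas you phrase it as a clopen decomposition and factor the unit branch through $\Phi\circ\sigma\circ\mathrm{pr}_{\cG}$; your compact-support bound $\iota(\supp\sigma)$ is also slightly tighter than the paper's $\supp(\sigma)\times s_{\cG}(\supp(\sigma))$, but these are cosmetic differences rather than a genuinely different route.
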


\begin{proof}
Let $\cC:= \cB\bowtie_\beta\cH$, with total space $C:= B\tensor*[_{s_{\cB}}]{\times}{_{r_{\cH}}} \cH$, and let $\cK:= \cG\bowtie\cH$. Because $\sigma$ is a section,  $i(\sigma)$ takes values in $C$. Indeed,
\[
    s_{\cB} (\sigma(x))
    =
    s_{\cG} \bigl(p(\sigma(x))\bigr)
    =
    s_{\cG} (x)
    \text{ resp.\ }
    s_{\cB} (0_{x})
    =
    s_{\cG} \bigl(p(0_{x})\bigr) 
    =
    s_{\cG} (x),
\]
both of which equal $r_{\cH}(h)$ since $(x,h)\in \cG\bowtie\cH$. It is, moreover, a section of the bundle $\cC = (C,q)$, as
\[
    q\bigl(i(\sigma)(x,h)\bigr)
    =
    \left\{
    	\begin{array}{ll}
			\bigl( p(\sigma(x)), h\bigr) & \mbox{if } h\in\cH\z \\
			\bigl( p(0_{x}), h \bigr) &\mbox{if } h\notin\cH\z
		\end{array}
	\right\}
    =
    (x,h).
\]

To see that $i(\sigma)$ is continuous, assume that $(x_i,h_i)_i$ is a net in $\cG\bowtie\cH$ which converges to $(x,h)$. If $h$ is not an idempotent, then since $\cH\z$ is closed, there exists $i_0$ such that $h_i \notin \cH\z$ for $i\geq i_0$, so that $    i(\sigma)(x_i,h_i) = \bigl( 0_{{x_i}}, h_i \bigr) .$
By \ref{cond.B-nets} of Definition~\ref{def.USCBdl} for $\cB$, we know that $p(0_{{x_i}})=x_i \to x=p(0_{{x}})$ in $\cG$ implies $0_{{x_i}}\to 0_{{x}}$ in $B$, and so $i(\sigma)(x_i,h_i) \to\bigl( 0_{{x}}, h \bigr) = i(\sigma)(x,h)$, since $h_i\to h$ also.

On the other hand, if $h=v\in \cH\z$, then since $\cH$ is \etale so that $\cH\z$ is also {\em open}, there exists $i_1$ such that for all $i\geq i_1$, we have $h_i=v_i \in \cH\z$ also. Thus, 
\[
    i(\sigma)(x_i,h_i) = \bigl( \sigma(x_i), v_i \bigr) \to \bigl( \sigma(x), v \bigr)
    =
    i(\sigma)(x,h) ,
\]
proving continuity of $i(\sigma)$.

To see that $i(\sigma)$ is compactly supported, note that
\[
    \supp (i(\sigma))
    \subset
    \supp(\sigma)\times s_{\cG}(\supp(\sigma))
    \subset \cG\times\cH.
\]
Since $\supp (\sigma)$ is compact, so is $\supp(\sigma)\times s_{\cG}(\supp(\sigma))$. Since $\cG\bowtie\cH$ is closed in $\cG\times\cH$,
this implies that the closed set $\supp (i(\sigma))$ is contained in a compact subset of $\cG\bowtie\cH$, making it compact.
\end{proof}

\begin{proposition}\label{prop.blendG}
The map
\[
i\colon \Gamma_c (\cG;\cB)\to \Gamma_c (\cG\bowtie\cH;\cB\bowtie_\beta\cH), 
\]
with $i(\sigma)$ as defined in Lemma~\ref{lem:blendG}, is a $*$-algebra homomorphism and
extends to a $*$-homomorphism $i\colon C^*(\cB)\to C^*(\cB\bowtie_\beta \cH)$.
\end{proposition}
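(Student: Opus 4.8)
The plan is to check by hand that $i$ is a $*$-algebra homomorphism and then to promote it to the universal $C^*$-completions by showing it preserves the $I$-norm. Write $\cK:=\cG\bowtie\cH$ and $\cC:=\cB\bowtie_\beta\cH$, and note that by construction $i(\sigma)(x,h)=0$ unless $h$ is idempotent, in which case $s_\cG(x)=r_\cH(h)=h$ forces $(x,h)=\iota(x)=(x,s_\cG(x))$ and $i(\sigma)(x,h)=\Phi(\sigma(x))=(\sigma(x),s_\cG(x))$. Linearity of $i$ is immediate, since addition and scalar multiplication on $\cC$ act only in the first coordinate. Before the main computations I would record two bookkeeping facts from Lemma~\ref{lm.ZS}: using \ref{cond.ZS8} and \ref{item:restriction-of-unit-is-unit} one gets $\iota(x)^{-1}=(x^{-1},r_\cG(x))=\iota(x^{-1})$, so that $\iota(\cG)=\{(x,h)\in\cK:h\in\cH\z\}$ is a subgroupoid stable under inversion; in particular the second coordinate $h^{-1}|_{x^{-1}}$ of $(x,h)^{-1}$ is idempotent if and only if $h$ is. The same identities give the reduction $\iota(y)^{-1}(x,h)=(y^{-1}x,h)$ whenever $r_\cG(y)=r_\cG(x)$.

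For multiplicativity I would expand $(i(\sigma)\square i(\tau))(x,h)$ via the convolution formula~\eqref{eq:star-alg-structure-of-sections}, summing over $\cK^{w}$ with $w=r_\cK(x,h)$. Since $i(\sigma)$ annihilates indices with non-idempotent second coordinate, only the terms $\iota(y)=(y,s_\cG(y))$ with $y\in\cG^{w}$ survive, and the reduction above rewrites each summand as $(\sigma(y),s_\cG(y))\bullet i(\tau)(y^{-1}x,h)$. Evaluating the product~\ref{cond.C2}, using that $\beta_{s_\cG(y)}$ is the identity by \condref{A3} and that $s_\cG(y)|_{y^{-1}x}=s_\cG(x)=r_\cH(h)$ by \ref{item:restriction-of-unit-is-unit}, each surviving summand collapses to $(\sigma(y)\,\tau(y^{-1}x),h)$ when $h$ is idempotent and to $(0_{x},h)$ otherwise. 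Summing yields $((\sigma\square\tau)(x),h)$ resp.\ $(0_{x},h)$, which is exactly $i(\sigma\square\tau)(x,h)$ in either case.

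For $*$-preservation I would compute $i(\sigma)^*(x,h)=i(\sigma)\bigl((x,h)^{-1}\bigr)^*$ and split on whether $h$ is idempotent, using the equivalence recorded in the first paragraph. When $h=s_\cG(x)$ we have $(x,h)^{-1}=(x^{-1},r_\cG(x))$, so $i(\sigma)((x,h)^{-1})=(\sigma(x^{-1}),r_\cG(x))$; applying the involution~\ref{cond.C3}, together with $\beta_{r_\cG(x)}=\mathrm{id}$ and $r_\cG(x)|_{x}=s_\cG(x)$ from \ref{item:restriction-of-unit-is-unit}, gives $(\sigma(x^{-1})^*,h)=(\sigma^*(x),h)=i(\sigma^*)(x,h)$. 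When $h\notin\cH\z$ both sides are the zero element $(0_{x},h)$ of $\cC_{(x,h)}$. Together with linearity and multiplicativity, this shows that $i$ is a $*$-algebra homomorphism.

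It remains to produce the extension. I would first read off directly from the definition that $i$ preserves the $I$-norm: for a unit $w=(v,v)$ the only nonzero values of $i(\sigma)$ on $\cK^{w}=\{(x,h):r_\cG(x)=v\}$ are $i(\sigma)(x,s_\cG(x))=(\sigma(x),s_\cG(x))$ for $x\in\cG^{v}$, of norm $\|\sigma(x)\|$, so $\|i(\sigma)\|_{I,r}=\|\sigma\|_{I,r}$, and the symmetric count over $\cK_{w}$ gives $\|i(\sigma)\|_{I,s}=\|\sigma\|_{I,s}$, whence $\|i(\sigma)\|_{I}=\|\sigma\|_{I}$. Now if $M$ is any $I$-norm decreasing $*$-representation of $\Gamma_c(\cK;\cC)$, then $M\circ i$ is a $*$-representation of $\Gamma_c(\cG;\cB)$ satisfying $\|M(i(\sigma))\|\le\|i(\sigma)\|_{I}=\|\sigma\|_{I}$, hence is itself $I$-norm decreasing and so $\|M(i(\sigma))\|\le\|\sigma\|_{C^*(\cB)}$. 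Taking the supremum over all such $M$ gives $\|i(\sigma)\|_{C^*(\cC)}\le\|\sigma\|_{C^*(\cB)}$, so $i$ is bounded for the universal norms and extends to a $*$-homomorphism $C^*(\cB)\to C^*(\cB\bowtie_\beta\cH)$. The \emph{main obstacle} is the multiplicativity step: one must verify that the convolution over $\cK^{w}$ genuinely collapses onto the convolution over $\cG^{w}$ and that the fibrewise product~\ref{cond.C2} then simplifies correctly, which rests precisely on the ``restriction of a unit is a unit'' identity \ref{item:restriction-of-unit-is-unit} and on $\beta$ being trivial on idempotents (\condref{A3}); the $I$-norm identity and the extension argument are then routine.
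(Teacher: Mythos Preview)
Your proposal is correct and follows essentially the same route as the paper's own proof: check linearity, multiplicativity, and $*$-preservation by direct computation using the identities from Lemma~\ref{lm.ZS} and \condref{A3}, then pass to the completion via the $I$-norm. The main differences are organizational: you front-load the bookkeeping identities (in particular $\iota(y)^{-1}(x,h)=(y^{-1}x,h)$ and the equivalence ``$h$ idempotent $\Leftrightarrow$ $h^{-1}|_{x^{-1}}$ idempotent''), which lets the subsequent computations run more smoothly than in the paper, and you observe that $i$ in fact \emph{preserves} the $I$-norm rather than merely decreasing it---the paper writes $\leq$, but your equality is correct since the map $x\mapsto(x,s_\cG(x))$ is a bijection $\cG^{v}\to\{\varepsilon\in\cK^{(v,v)}:i(\sigma)(\varepsilon)\neq 0\}$.
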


\begin{proof}
We have shown in Lemma~\ref{lem:blendG} that $i(\sigma)$ is an element of $\Gamma_c (\cG\bowtie\cH,\cB\bowtie_{\beta}\cH)$, and $\sigma\mapsto i(\sigma)$ is clearly linear.

To see that the map is $*$-preserving, we compute for $(x,h)\in\cG\bowtie\cH$,
\begin{equation}\label{eq:pitstop}
    i(\sigma)^* (x,h)
    \overset{\eqref{eq:star-alg-structure-of-sections}}{=}
    \bigl[i(\sigma)((x,h)\inv)\bigr]^*
    \overset{\eqref{eq:inv-on-ZS-gpd}}{=}
    \bigl[i(\sigma)(h^{-1}\cdot x^{-1}, h^{-1}|_{x^{-1}})\bigr]^*.
\end{equation}
First note that $h^{-1}|_{x^{-1}}\in\cH\z$ if and only if $h\inv\in \cH\z$: we know from \ref{item:restriction-of-unit-is-unit} that the restriction of a unit is a unit, and
\[
    h\inv \overset{\ref{cond.ZS9}}{=} h\inv|_{s_{\cH}(h\inv)}
    =
     h\inv|_{s_{\cG}(x)}
     \overset{\ref{cond.ZS3}}{=} (h\inv|_{x\inv})|_{x}.
\]
So we may first assume that $h^{-1}|_{x^{-1}}, h\inv$ are both elements of $\cH\z$.
By \ref{cond.ZS6} and \ref{cond.ZS8}, we have $h^{-1}|_{x^{-1}}=r_{\cG}(x)$ and $h\inv\cdot x\inv = x\inv$, respectively. Therefore,
\begin{align*}
    i(\sigma)^* (x,h)
    &\overset{\eqref{eq:pitstop}}{=}
     (\sigma(h^{-1}\cdot x^{-1}), r_{\cG}(x))^*
    =
     (\sigma( x^{-1}), r_{\cG}(x))^*
    \\
    &\overset{\ref{cond.C3}}{=}
    \left(\beta_{r_{\cG}(x)\inv}\bigl(\sigma(x^{-1})^*\bigr), r_{\cG}(x)\inv|_{p(\sigma( x^{-1}))\inv}\right) 
    \\
    &\overset{\ref{cond.A3}}{=}
    \left(\sigma( x^{-1})^*, r_{\cG}(x)|_{x}\right) 
    \\
    &\overset{\ref{item:restriction-of-unit-is-unit}}{=}
    \left(\sigma( x^{-1})^*, s_{\cG}(x)\right) 
    \overset{\eqref{eq:star-alg-structure-of-sections}}{=}
    \left(\sigma^*( x), r_{\cH}(h)\right) 
    =i(\sigma^* )(x,h).
\end{align*}

On the other hand, assume $h$ and $h^{-1}|_{x^{-1}}$ are not idempotents.
Since $i(\sigma)^*$ and $i(\sigma^*)$ are both sections, we do not need to keep track of the subscript of the zero-element; instead, we can just compute (using that $\beta$ is linear and hence sends $0$ to $0$)
\begin{align*}
    i(\sigma)^* (x,h)
    &\overset{\eqref{eq:pitstop}}{=}
    (0, (h^{-1}|_{x^{-1}})\inv|_{(h^{-1}\cdot x^{-1})\inv})
    \\
    &\overset{\ref{lm.ZS}(4)}{=}
    (0, (h|_{h\inv\cdot x^{-1}})|_{(h^{-1}\cdot x^{-1})\inv})
    \\
    &\overset{\ref{cond.ZS3}}{\underset{\ref{cond.ZS9}}{=}}
    (0, h)
    =
    i(\sigma^*) (x,h).
\end{align*}
Lastly, we need to see that $\sigma\mapsto i(\sigma)$ is multiplicative, so let $\tau$ be another element of $\Gamma_c (\cG;\cB)$. We compute for $(x,h)\in\cG\bowtie\cH$, using \eqref{eq:star-alg-structure-of-sections}
\begin{align}\label{eq:product-of-tildeL}
    \bigl(i(\sigma)\square  i(\tau) \bigr)(x,h) =
    \sum_{(y,k)\in \cG\bowtie\cH^{r(x,h)}} i(\sigma)(y,k) \bullet   i(\tau)((y,k)\inv (x,h))
    .
\end{align}
Recall from Equation~\eqref{eq:s-and-r-on-SZ} that the range of $(x,h)$ in $(\cG\bowtie\cH)\z$ is given by $r_{\cG}(x)$. Note that, if $(y,k)$ has $k\notin \cH\z$, then $i(\sigma)(y,k) =(0_y, k)$. We get by the definition in  \ref{cond.C2}
\[
    (0,k)\bullet (b,h)= (0, k\vert_{p(b)}h)
    \text{ and }
    (y,k)\bullet (0,h)= (0, k\vert_{p(b)}h).
\]
We see that in Equation~\eqref{eq:product-of-tildeL}, only summands of the form $(y,k)=(y, s_{\cB}(y))$ might not vanish. Using the equality
\[
    (y,s_{\cB}(y))\inv (x,h)
    =
    (y\inv x, h)
\]
in $\cG\bowtie\cH$,
this all in all yields
\begin{align*}
    \bigl(i(\sigma)\square  i(\tau) \bigr)(x,h) &=
    \sum_{y\in \cG^{r(x)}} i(\sigma)(y,s_{\cB}(y))\bullet    i(\tau)(y\inv x, h)
    \\&=
    \begin{cases}
        (0,h) & \text{ if } h\notin \cH\z,
        \\
        \sum_{y\in \cG^{r(x)}} (\sigma(y),s_{\cB}(y))\bullet (\tau(y\inv x), s_{\cB}(x)) & \text{ otherwise}.
    \end{cases}
\end{align*}
On the other hand, $i(\sigma\square\tau) (x,h)$ is also zero if $h\notin \cH\z$, and otherwise
\begin{align*}
    i(\sigma\square\tau) (x,h) &=
        \bigl((\sigma\square\tau) (x), s_{\cB}(x)\bigr)
    =
        \sum_{y\in \cG^{r(x)}} (\sigma(y) \cdot \tau(y\inv x), s_{\cB}(x))
    ,
\end{align*}
which coincides with $\bigl(i(\sigma)\square  i(\tau) \bigr)(x,h)$ by definition, see \ref{cond.C2}.

It remains to show that $i$ extends. Since
$
        \norm{(b,h)}_{\cC}
        =
        \norm{b}_{\cB}
$ for $\cC:=\cB\bowtie_{\beta}\cH$,
    we have
    \begin{align*}
        \norm{i(\sigma)}_{I,s}
        &=
        \sup_{v\in\cK\z}\left(
         \sum_{\varepsilon\in \cK_{v}}
        \norm{i(\sigma)(\varepsilon)}
        \right)
        =
        \sup_{v\in\cK\z}\left(
         \sum_{(x,s_{\cG}(x))\in \cK_{v}}
        \norm{(\sigma(x),v)}
        \right)
        \\
        &\leq
        \sup_{v\in\cK\z}\left(
         \sum_{x\in \cG_{v}}
        \norm{\sigma(x)}
        \right)
        =
        \norm{\sigma}_{I,s},
    \end{align*}
    and similarly
    \begin{align*}
        \norm{i(\sigma)}_{I,r}
        &=
        \sup_{v\in\cK\z}\left(
         \sum_{\varepsilon\in \cK^{v}}
        \norm{i(\sigma)(\varepsilon)}
        \right)
        =
        \sup_{v\in\cK\z}\left(
         \sum_{(x,s_{\cG}(x))\in \cK^{v}}
        \norm{(\sigma(x),s_{\cG}(x))}
        \right)
        \\
        &\leq
        \sup_{v\in\cK\z}\left(
         \sum_{x\in \cG^{v}}
        \norm{\sigma(x)}
        \right)
        =
        \norm{\sigma}_{I,r},
    \end{align*}
    which implies $\norm{i(\sigma)}_I\leq \norm{\sigma}_I.$ Thus, every $I$-norm decreasing representation $L$ of $\Gamma_c (\cK;\cB)$ gives rise to an $I$-norm decreasing representation $L\circ i$ of $\Gamma_c(\cG;\cB)$. In particular,
    \begin{align*}
        \norm{i(\sigma)}_{C^*(\cC)}
        &=
        \sup\{\norm{L(i(f))}: \text{$I$-norm decreasing $*$-rep.\ $L$ of }\Gamma_c (\cK;\cC)\}
        \\
        &\leq
        \sup\{\norm{L'(\sigma)}: \text{$I$-norm decreasing $*$-rep\ $L'$ of }\Gamma_c (\cG;\cB)\}
        ,
    \end{align*}
    i.e.\ $ \norm{i(\sigma)}_{C^*(\cC)} \leq \norm{\sigma}_{C^*(\cB)}$,
    which proves that $i$ extends.
\end{proof}

\begin{definition}\label{def:cov-rep}
    Assume $(\cG,\cH)$ is a matched pair of \etale groupoids and let $\cB=(B,p)$ be a Fell bundle over $\cG$ with a $(\cG,\cH)$-compatible $\cH$-action $\beta$. Let $\cU:= \cG\z=\cH\z.$ A {\em covariant representation of $(\cB, \beta)$} is a quadruple $(\mu, \cU\ast\mscr{H},\hat{\pi}, \hat{M})$ consisting of
    \begin{enumerate}[label=(R\arabic*)]
        \item\label{item:qim} a Radon measure $\mu$ on $\cU$ which is  quasi-invariant with respect to the Haar system of counting measures on $\cG\bowtie\cH$, 
        \item\label{item:BHbdl} a Borel Hilbert bundle $\cU\ast\mscr{H}$ over $\cU$, where we write $\mscr{H}=\{H(v)\}_{v\in\cU}$,
        \item\label{item:star-functor} a Borel $*$-functor $\hat{\pi}\colon \cB \to \End (\cU\ast \mscr{H})$, and
        \item\label{item:gpd-rep} a Borel homomorphism $\hat{M}\colon \cH \to \Iso (\cU\ast\mscr{H})$ such that $\hat{M}_h=(r_{\cH}(h),M_h, s_{\cH}(h))$ for some unitary operator $M_h\colon H(s_{\cH}(h))\to H(r_{\cH}(h))$,
    \end{enumerate}
    such that
    \begin{equation}\label{eq:def:covariance}
        \hat{M}_{h}\hat{\pi}(b) = \hat{\pi} (\beta(h,b))\hat{M}_{h|_{p(b)}}
    \end{equation}
    for all 
    $(h,b)\in\cH \tensor*[_{s_{\cH}}]{\times}{_{r_{\cB}}} \cB$.
\end{definition}

We let $B(\cU\ast\mscr{H})$ denote the Borel sections as defined in 
\cite[Definition~F.1]{Wil2007}, where the interested reader can also find a precise definition of \ref{item:BHbdl}. 
We refer the reader further to \cite[Definition~4.5]{MW2008} for the definition of \ref{item:star-functor} and to \cite[Definition 3.37]{Wil2019} for the definition of  \ref{item:gpd-rep}. We point out that $\hat{\pi}$ being a *-functor in particular allows us to write $\hat{\pi}(b) = (r_{\cB}(b),\pi(b),s_{\cB}(b))$ for some operator $\pi(b)\colon H(s_{\cB}(b))\to H(r_{\cB}(b))$ (so we could have written Equation~\eqref{eq:def:covariance} without the hats).

\begin{example}[see Example~\ref{ex:if-H-is-trivial}]\label{ex:if-H-is-trivial-rep}
   If $\cH=\cG\z$, so that $\cG\bowtie\cH\cong \cG$ and $\cB\bowtie_{\text{triv}}\cH \cong \cB$, then $\mu$ as in Condition~\ref{item:qim} is quasi-invariant with respect to the Haar system of counting measures on $\cG$. Furthermore, since $\hat{M}$ of Condition~\ref{item:gpd-rep} is a homomorphism, it is just (fibre-wise) the identity on $H(v)$ for each $v\in\cH=\cG\z$, so Equation~\eqref{eq:def:covariance} becomes vacuous.  It follows that
   covariant representations of $\cB$ equipped with the trivial $\cG\z$-action are exactly strict representations of $\cB$ in the sense of \cite[Definition 4.9]{MW2008}.
\end{example}

\begin{example}[see Example~\ref{ex.ZSbundle}]\label{ex:if-B-is-trivial-rep}
    Conversely, it was shown in \cite[Appendix B]{MW2008} that, if a Borel $*$-functor $\hat{\pi}$ for the trivial line bundle $\cB(\cG)$ gives rise to a {\em nondegenerate} $*$-representation of $C_c (\cG)$, then $\hat{\pi}$ can be viewed as a unitary representation $\hat{N}$ of $\cG$ by defining
    \[
        \hat{N}_{x} := \left( r_{\cG}(x), \pi(1,x), s_{\cG}(x)\right).
    \]
    Thus, in that case, Definition~\ref{def:cov-rep} boils down to a choice of quasi-invariant measure $\mu$ on $\cG\bowtie\cH$ and two unitary representations $\hat{N}$ of $\cG$ and $\hat{M}$ of $\cH$ on the same Borel Hilbert bundle $\cU\ast\mscr{H}$ satisfying
    \begin{equation}\label{eq:def:covariance-special-case}
        M_{h}N_{x} = N_{h\cdot x}{M}_{h|_{x}}
        , \quad \text{ if } s_{\cH}(h)= r_{\cB}(x).
    \end{equation}
    If we let $K_{(x,h)}:= N_{x}M_{h}$ for $(x,h)\in\cG\bowtie\cH$, then the above equation makes $K$ a homomorphism, so that $(\mu, \cU\ast\mscr{H},\hat{K})$ is a unitary representation of $\cG\bowtie\cH$.
   
\end{example}

As in \cite[F.2]{Wil2007}, we let $L^2(\cU\ast\mscr{H},\mu)$ be the direct integral of the Hilbert bundle, i.e.\ the normed vector space formed by the quotient of
\[
    \cL^2(\cU\ast\mscr{H},\mu):=\left\{ \xi\in B(\cU\ast\mscr{H}) \text{ s.t.\ } v\mapsto \norm{\xi(v)}^2 \text{ is $\mu$-integrable}\right\}
\]
where functions agreeing $\mu$-almost everywhere are being identified. Furthermore, let $\Delta := d\nu / d\nu\inv$ be the Radon-Nykodym derivative of $\nu=\mu\circ\lambda$ and its pushforward $\nu\inv$. Since $\cG$ and $\cH$ are assumed to be second countable and locally compact (so that $\cG\bowtie\cH$ is as well), we know by \cite[Prop.\ 7.9]{Wil2019} that $\Delta\colon \cG\bowtie\cH\to(\mathbb{R}^+, \times)$ can be chosen to be a Borel homomorphism.

For $\varepsilon=(x,h)$ some element of $\cG\bowtie\cH$ and $\sigma$ a section of $\cB\bowtie_\beta\cH$, we will write $\sigma_{\cB}(\varepsilon):=\mathrm{pr}_{\cB}(\sigma(\varepsilon))\in\cB_{x}$  and $\hat{M}_\varepsilon :=\hat{M}_{h}$. Note that we have to be careful with the latter notation; for example, if   $(\varepsilon,\varphi)\in(\cG\bowtie\cH)^{(2)}$ for some $\varphi=(y,k)$, then
    \begin{equation}\label{eq:M-composition}
        \varepsilon\varphi = (x(h\cdot y), h|_y k),
        \text{ so that }
        M_{h|_y} M_\varphi
        =
        M_{h|_y} M_{k}
        =
        M_{h|_y k}
        =
        M_{\varepsilon\varphi}.
    \end{equation}
\begin{theorem}\label{thm:representations-of-ZS-Fellbdl}
    Given a covariant representation $(\mu, \cU\ast\mscr{H},\hat{\pi}, \hat{M})$ of $(\cB, \beta)$,
    define for a section $\sigma\in\Gamma_c (\cG\bowtie\cH;\cB\bowtie_\beta\cH)$, any $\xi\in \cL^2(\cU\ast\mscr{H},\mu) $,  and $v\in \cU$,
    \[
        \bigl(L(\sigma)\xi\bigr) (v)
        =
        \sum_{\substack{\varepsilon\in (\cG\bowtie\cH)^v}}
            \pi ( \sigma_{\cB} (\varepsilon) )M_{\varepsilon}\bigl(\xi(s(\varepsilon))\bigr)
            \Delta(\varepsilon)^{-\frac{1}{2}}
       .
    \]
    Then $L$ is a 
    $I$-norm decreasing $*$-representation of $\Gamma_c(\cG\bowtie\cH;\cB\bowtie_\beta\cH)$ on $L^2(\cU\ast\mscr{H},\mu)$.
\end{theorem}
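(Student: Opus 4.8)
The plan is to sidestep a direct verification that the displayed formula is multiplicative, $*$-preserving, and $I$-norm decreasing, and instead to recognize $L$ as the integrated form of a \emph{strict} representation of the Fell bundle $\cC := \cB\bowtie_\beta\cH$ over $\cK := \cG\bowtie\cH$, so that the statement follows directly from the Muhly--Williams integration theorem \cite[Prop.\ 4.10]{MW2008}. Keeping the measure $\mu$ of \ref{item:qim} and the Borel Hilbert bundle $\cU\ast\mscr{H}$ fixed, I would amalgamate $\hat\pi$ and $\hat M$ into a single Borel $*$-functor $\hat\rho\colon\cC\to\End(\cU\ast\mscr{H})$ by setting, for $(b,h)\in\cC$ with $x:=p(b)$,
\[
    \hat\rho(b,h) := \hat\pi(b)\,\hat M_h ,
    \qquad\text{equivalently}\qquad
    \rho(b,h) = \pi(b)\,M_h .
\]

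The first step is to confirm that this is well defined as a morphism. Since $(b,h)\in C$ forces $s_\cB(b)=r_\cH(h)$, the unitary $M_h\colon H(s_\cH(h))\to H(r_\cH(h))=H(s_\cG(x))$ composes with $\pi(b)\colon H(s_\cG(x))\to H(r_\cG(x))$, and by \eqref{eq:s-and-r-on-SZ} the composite covers $s_\cK(x,h)=s_\cH(h)\to r_\cK(x,h)=r_\cG(x)$; thus $\hat\rho(b,h)=(r_\cG(x),\pi(b)M_h,s_\cH(h))$ genuinely lies in $\End(\cU\ast\mscr{H})$. Over a unit $v$ one has $M_v=\mathrm{id}$, so $\rho$ restricts on the $C^*$-algebra fibre $\cC_v\cong\cB_v$ to $\pi|_{\cB_v}$, a bona fide $*$-representation. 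Boundedness is immediate: $M_h$ is unitary and $\norm{\pi(b)}\le\norm{b}$ (because $\pi(b)^*\pi(b)=\pi(b^*b)$ with $\pi$ a $*$-homomorphism on $\cB_{s_\cG(x)}$), whence $\norm{\rho(b,h)}\le\norm{b}=\norm{(b,h)}_{\cC}$; this same estimate is what produces the $I$-norm bound downstream. Borel-ness of $\hat\rho$ is inherited from that of $\hat\pi$ and $\hat M$ together with continuity of the bundle operations.

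The heart of the argument is checking that $\hat\rho$ is multiplicative and $*$-preserving, and here the covariance relation \eqref{eq:def:covariance} is exactly the needed ingredient. For multiplicativity I would expand $\rho(a,g)\rho(b,h)=\pi(a)M_g\pi(b)M_h$ and rewrite $M_g\pi(b)=\pi(\beta_g(b))M_{g|_{p(b)}}$ via \eqref{eq:def:covariance}; combined with multiplicativity of $\pi$ and the homomorphism property $M_{g|_{p(b)}}M_h=M_{g|_{p(b)}h}$ (cf.\ \eqref{eq:M-composition}), this recovers $\pi\bigl(a\beta_g(b)\bigr)M_{g|_{p(b)}h}=\rho\bigl((a,g)\bullet(b,h)\bigr)$ with the product formed as in \ref{cond.C2}. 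For the $*$-property, starting from $\rho(b,h)^*=M_h^*\pi(b)^*=M_{h^{-1}}\pi(b^*)$ and applying \eqref{eq:def:covariance} once more (now with $p(b^*)=x^{-1}$ by \condref{F5}) yields $\pi\bigl(\beta_{h^{-1}}(b^*)\bigr)M_{h^{-1}|_{x^{-1}}}=\rho\bigl((b,h)^*\bigr)$, matching the involution \ref{cond.C3}.

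With $\hat\rho$ thus established as a strict representation of $\cC$ in the sense of \cite[Def.\ 4.9]{MW2008}---the quasi-invariance required there being precisely Condition~\ref{item:qim}---the integrated form supplied by \cite[Prop.\ 4.10]{MW2008} reads
\[
    \bigl(L(\sigma)\xi\bigr)(v)=\sum_{\varepsilon\in\cK^v}\rho(\sigma(\varepsilon))\,\xi(s(\varepsilon))\,\Delta(\varepsilon)^{-\frac12},
\]
and since $\rho(\sigma(\varepsilon))=\pi(\sigma_{\cB}(\varepsilon))M_\varepsilon$ this is exactly the formula in the statement. That theorem then delivers simultaneously that $L$ is a well-defined $I$-norm decreasing $*$-representation, hence in particular continuous for the inductive limit topology. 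I expect the main obstacle to be not any single computation---the algebra collapses cleanly once covariance is invoked---but rather the careful matching of the two frameworks: ensuring that the quasi-invariant measure and Borel Hilbert bundle carried by the covariant representation satisfy the hypotheses of \cite[Prop.\ 4.10]{MW2008} verbatim, and that the Borel and fibrewise-representation conditions defining a strict representation are genuinely inherited by $\hat\rho$ from $\hat\pi$ and $\hat M$.
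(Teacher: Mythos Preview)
Your approach is correct and takes a genuinely different route from the paper. The paper proves the theorem by direct verification: it first establishes Borel measurability and uses a Renault-style Cauchy--Schwarz estimate to obtain the $I$-norm bound, then expands the double sum for $L(\sigma)L(\tau)$ and invokes the covariance relation \eqref{eq:def:covariance} pointwise to match it with $L(\sigma\square\tau)$, and similarly computes $L(\sigma^*)$ explicitly. You instead package the data into a single Borel $*$-functor $\hat\rho(b,h)=\hat\pi(b)\hat M_h$ on $\cC$ and invoke \cite[Prop.~4.10]{MW2008} once. The underlying algebra is the same---your check that $\hat\rho$ is multiplicative and $*$-preserving is precisely the computation the paper performs inside the sums---but your packaging is cleaner and avoids re-proving the analytic estimates already contained in Muhly--Williams. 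It also makes transparent the duality with the paper's disintegration theorem (Theorem~\ref{thm.disintegration}), where the reverse construction $\psi\mapsto(\pi,M)$ is carried out under a unitality hypothesis; your direction needs no such hypothesis. The only point requiring care, as you yourself flag, is verifying that $\hat\rho$ is Borel in the sense of \cite[Def.~4.5]{MW2008}: one must argue that for Borel sections $\xi,\eta$ of $\cU\ast\mscr{H}$ the map $(b,h)\mapsto\langle\pi(b)M_h\xi(s_\cH(h)),\eta(r_\cB(b))\rangle$ is jointly Borel, which follows by noting that $h\mapsto M_h\xi(s_\cH(h))$ is itself a Borel section (since $\hat M$ is a Borel homomorphism) and then using Borel-ness of $\hat\pi$.
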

Similarly to \cite[Remark 4.12]{MW2008},  we emphasize that $L$ need not be nondegenerate and thus not a representation in the sense of \cite[Definition 4.7]{MW2008}. Furthermore, we point out that the modular function $\Delta$ in the formula for $L$ is needed to account for the fact that we defined our involution formula on the $*$-algebra of sections without $\Delta.$
\begin{remark}
    When $\cH=\cG\z$, so that $\cG\bowtie\cH\cong \cG$ and $\cB\bowtie_{\beta}\cH \cong \cB$ (see also Examples~\ref{ex:if-H-is-trivial} and~\ref{ex:if-H-is-trivial-rep}), then the above theorem recovers  \cite[Proposition 4.10.]{MW2008}. 

    Conversely, assume $\cB=\cB(\cG)$ and that we are in the situation of Example~\ref{ex:if-B-is-trivial-rep}, so that the covariant representation $(\mu, \cU\ast\mscr{H},\hat{\pi},\hat{M})$ of $\cB(\cG)$ can be viewed instead as a unitary representation $(\mu, \cU\ast\mscr{H},\hat{K})$ of $\cG\bowtie\cH$, where $K_{(x,h)}:= \pi(1,x)M_{h}$.
    In this case, the above theorem recovers a slightly weakened version of \cite[Proposition 7.12]{Wil2019} in which the assumption of an `almost everywhere unitary representation' in said proposition is made stronger by dropping the word `almost'.
\end{remark}

\begin{proof}[Proof of Theorem~\ref{thm:representations-of-ZS-Fellbdl}]
    For the duration of this proof, let $\cK:=\cG\bowtie\cH$, and let $s$ and $r$ denote its range and source maps. Since $\hat{\pi}$ is a Borel $*$-functor and $\hat{M}$ a Borel homomorphism,
    $\varepsilon\mapsto \hat{\pi}( \sigma_{\cB} (\varepsilon) )\hat{M}_\varepsilon$ is a Borel 
    map from $\cK$ to $\End (\cU\ast\mscr{H})$. As $\Delta$ is Borel also, we thus know that for any $\xi,\zeta\in B(\cU\ast \mscr{H})$, the map
    \[
        F\colon
        \cK
        \to\mathbb{C}, 
        \;
        \varepsilon\mapsto \Biginner{\pi ( \sigma_{\cB} (\varepsilon) )M_{\varepsilon}\bigl(\xi(s (\varepsilon))\bigr)}{\zeta (r (\varepsilon))}
        \Delta (\varepsilon)^{-\frac{1}{2}}
    \]
    is also Borel. 
    We will use an argument from the proof of \cite[Proposition 1.29]{Wil2019} to show that this implies that the map
    \[
        f\colon \cU\to\mathbb{C}, \; v\mapsto
        \sum_{\substack{\varepsilon\in \cK^v}}
            \Biginner{\pi ( \sigma_{\cB} (\varepsilon) )M_{\varepsilon}\bigl(\xi(s (\varepsilon))\bigr)}{\zeta (v)}
            \Delta (\varepsilon)^{-\frac{1}{2}},
    \]
    is Borel also. Since $\cK$ is \etale, we can cover the compact support of $\sigma$ by finitely many open bisections. If we take a continuous partition of unity subordinate to that cover, then we see that we may without loss of generality assume that $\sigma$ is supported in a bisection, say $U\subset\cK$. If we let $t\colon U\to\cK$ be the continuous local inverse of the range map, then $ f(v) = F(t(v))$ is Borel as composition of a Borel and a continuous function.
    
    We next claim that
    it is $\mu$-integrable. To show this, we first compute
    \begin{align*}
        &\int_{\cU} \abs{f(v)}\,  d\mu(v)
        \leq
            \int_{\cK}
                \norm{\pi ( \sigma_{\cB} (\varepsilon) )M_{\varepsilon}\bigl(\xi(s (\varepsilon))\bigr)}
                \norm{\zeta (r (\varepsilon))}
                \Delta (\varepsilon)^{-\frac{1}{2}}
        d\nu (\varepsilon).
    \end{align*}
    Note that the $*$-functor $\hat{\pi}$ is norm-decreasing \cite[Remark 4.6]{MW2008}. Since the codomain of $\hat{M}$ is the isomorphism groupoid, $M_\varepsilon$ is a unitary, so in particular $\norm{M_\varepsilon}=1$. Both combined yield
    \[
        \norm{\pi ( \sigma_{\cB} (\varepsilon) )M_{\varepsilon}\bigl(\xi(s (\varepsilon))\bigr)}
        \leq
        \norm{\sigma_{\cB}(\varepsilon)}\,\norm{\xi(s (\varepsilon))}
        =\norm{\sigma(\varepsilon)}\,\norm{\xi(s (\varepsilon))}.
    \]
    Next, we will use a trick from \cite[Proposition 4.7]{MW2008} resp.\ \cite[Proposition 7.12]{Wil2019}, attributed to Renault: Using Cauchy--Schwarz for $\nu$ in the second of the following inequalities, we get
     \begin{align*}
        \left(\int_{\cU} \abs{f(v)}\,  d\mu(v)\right)^2
        &
        \leq
        \left(
            \int_{\cK}
                \norm{\sigma (\varepsilon)}\,
                \norm{\xi(s(\varepsilon))}\,
                \norm{\zeta (r(\varepsilon))}
                \Delta (\varepsilon)^{-\frac{1}{2}}
        d\nu (\varepsilon)
        \right)^2
        \\
        &\leq
        \left(
            \int_{\cK}
                \norm{\sigma (\varepsilon)}\,
                \norm{\xi(s(\varepsilon))}^2
                \Delta (\varepsilon)\inv
        d\nu (\varepsilon)
        \right)
        \\
        &\qquad\qquad
        \cdot
        \left(
            \int_{\cK}
                \norm{\sigma (\varepsilon)}\,
                \norm{\zeta (r(\varepsilon))}^2
        d\nu (\varepsilon)
        \right).
    \end{align*}
    For the first factor, we compute 
      \begin{align*}
        &\int_{\cU}
        \sum_{\varepsilon \in \cK^v}
            \norm{\sigma(\varepsilon) }\,
            \norm{\xi(s(\varepsilon) )}^2
            \Delta (\varepsilon)\inv
       \,  d\mu (v)
        =
        \int_{\cU}
        \sum_{ \varepsilon' \in \cK_v}
            \norm{\sigma (\varepsilon') }\,
            \norm{\xi(s(\varepsilon'))}^2
       \,  d\mu (v)
        \\
        &\quad \leq
        \int_{\cU}
            \norm{\sigma}_{I,s}
            \norm{\xi(v)}^2
       \,  d\mu (v)
        =
        \norm{\sigma}_{I,s}
        \norm{\xi}_2^2
        .
    \end{align*}
    Similarly one gets for the second factor
      \begin{align*}
        &\int_{\cK}
                \norm{\sigma (\varepsilon)}\,
                \norm{\zeta (r(\varepsilon))}^2
        d\nu (\varepsilon)
        \leq
        \norm{\sigma}_{I,r} \, \norm{\zeta}_{2}^2
        .
    \end{align*}
    Both combined yield
    \begin{align*}
        \left(\int_{\cU} \abs{f(v)}\,  d\mu(v)\right)^2
        \leq
         (\norm{\sigma}_{I,s} \, \norm{\xi}_{2}^2)\,(\norm{\sigma}_{I,r} \, \norm{\zeta}_{2}^2)
         \leq
         \norm{\sigma}_{I}^2 \, \norm{\xi}_{2}^2\, \norm{\zeta}_{2}^2        .
    \end{align*}
    Since
    \begin{align*}
        \inner{L(\sigma)\xi}{\zeta}
        &=
        \int_{\cK}
            \Biginner{\pi ( \sigma_{\cB} (x,h) )M_{h}\bigl(\xi(s_{\cH}(h))\bigr)}{\zeta (v)}
            \Delta(x,h)^{-\frac{1}{2}}
        \,  d\nu(x,h)
        \\
        &=
        \int_{\cU} f(v) \,  d\mu(v),
    \end{align*}
    we have thus proved that $L(\sigma)\xi$ is an element of $\cL^2 (\cU\ast \mscr{H},\mu)$ if $\xi$ is. We point out that this also shows that $L$
    is $I$-norm decreasing.

    We next check that $L$ is multiplicative, so let $\sigma,\tau$ be two sections.
    Using the definition of $L$ twice, we get
    \begin{align*}
        &   (L(\sigma)L(\tau)\xi)
        (v)   
        \\
        &=\sum_{\varepsilon\in \cK^v}
                \pi ( \sigma_{\cB} (\varepsilon) )M_{\varepsilon}\bigl[L(\tau)\xi(s_{\cH}(h))\bigr]
            \Delta(\varepsilon)^{-\frac{1}{2}}
        \\
        &=
        \sum_{\substack{(\varepsilon,\varphi)\in\cK^{(2)} \\ r(\varepsilon)=v}}
            \pi ( \sigma_{\cB} (\varepsilon) )M_{\varepsilon}
            \left[
            \pi ( \tau_{\cB} (\varphi) )M_{\varphi}\bigl(\xi(s(\varphi))\bigr)
            \Delta(\varphi)^{-\frac{1}{2}}
            \right]
            \Delta(\varepsilon)^{-\frac{1}{2}}
        .
    \end{align*}
    If $\varepsilon=(x,h)$ and $\varphi=(y,k)$, then by the covariance condition and the fact that $p(\tau_\cB (y,k)) =y$ since $\tau$ is a section, we have
    \[
        M_h \pi \bigl( \tau_\cB (y,k)\bigr)
        =
        \pi \bigl( \beta(h, \tau_\cB (y,k))\bigr) M_{h|_{y}}
        .
    \]
    Using Equation~\eqref{eq:M-composition}, we arrive at
    \[
        M_{\varepsilon} \pi ( \tau_{\cB} (\varphi) )M_{\varphi}
        =
        \pi \bigl( \beta_{h}( \tau_\cB (\varphi))\bigr)  M_{\varepsilon\varphi}.
    \]
    Since $s(\varphi)=s(\varepsilon\varphi)$, since 
    $\Delta$ is a homomorphism, and since $\hat{\pi}$ is a $*$-functor, we conclude: if $\beta_{\varepsilon}:=\beta_{h}$ for $\varepsilon =(x,h)\in\cK$, then
    \begin{align*}
        &
            (L(\sigma)L(\tau)\xi)
        (v)\\
        &=\sum_{\varepsilon\in\cK^v}
          \sum_{\substack{\varphi\in\cK:\\ s(\varepsilon)=r(\varphi)}}
            \pi \Bigl( \sigma_{\cB} (\varepsilon) \beta_{\varepsilon}(\tau_\cB (\varphi))\Bigr)  M_{\varepsilon\varphi}\,\xi(s(\varepsilon\varphi))
            \Delta(\varepsilon\varphi)^{-\frac{1}{2}}
        \\
        &=
        \sum_{\varepsilon\in\cK^v}
        \sum_{\substack{\varphi\in\cK^v}}
            \pi \Bigl( \sigma_{\cB} (\varepsilon) \beta_{\varepsilon}(\tau_\cB (\varepsilon\inv\varphi))\Bigr)  M_{\varphi}\,\xi(s(\varphi))
            \Delta(\varphi)^{-\frac{1}{2}}
        ,
    \end{align*}
    where we used `left-invariance' of the counting measure in the last step.
    On the other hand,
     \begin{align*}
        (L(\sigma\square \tau)\xi)
        (v)&=
        \sum_{\substack{\substack{\varphi\in \cK^v}}}
            \pi \bigl( (\sigma\square \tau)_{\cB} (\varphi) \bigr)M_{\varphi}\bigl(\xi(s(\varphi))\bigr)
            \Delta(\varphi)^{-\frac{1}{2}}
        .
    \end{align*}
    Using \ref{cond.C2} (the definition of multiplication in $\cB\bowtie_\beta\cH$), we compute for $\varphi\in\cK^v$
    \[
        (\sigma\square \tau)_{\cB } (\varphi)
        =
        \sum_{\varepsilon\in\cK^v}
            \sigma_{\cB} (\varepsilon)
            \beta_{\varepsilon}
            \Bigl(
                \tau_{\cB}
                \bigl(\varepsilon\inv \varphi\bigr)
            \Bigr)
            .
    \]
    Thus, if we exchange the order of the (finite) summation, we see that indeed $L(\sigma)L(\tau)=L(\sigma\square\tau).$
    
    To see that $L$ is $*$-preserving, let us write $\sigma^*_{\cB}:=(\sigma^*)_{\cB}=\mathrm{pr}_{\cB}\circ(\sigma^*)$ and consider
    \[
        \bigl( L(\sigma^*)\zeta\bigr) (v)
        =
        \sum_{\substack{\varepsilon\in (\cG\bowtie\cH)^v}}
            \pi ( \sigma^*_{\cB} (\varepsilon) )M_{\varepsilon}\bigl(\zeta(s(\varepsilon))\bigr)
            \Delta(\varepsilon)^{-\frac{1}{2}}.
    \]
    If $\varepsilon=(x,h)$, then
    $\varepsilon\inv = (h\inv \cdot x\inv, h\inv|_{x\inv}) $ and thus
    \begin{align*}
        \sigma^*_{\cB} (\varepsilon)
        =
        \mathrm{pr}_{\cB} \bigl( \sigma(\varepsilon\inv)^*\bigr)
        &=
         \mathrm{pr}_{\cB}
        \Bigl(\bigl[\sigma_{\cB}(h\inv \cdot x\inv, h\inv|_{x\inv}), h\inv|_{x\inv}\bigr]^*\Bigr)
        \\&\overset{\ref{cond.C3}}{=}
        \beta_{( h\inv|_{x\inv})\inv } \bigl(\sigma_{\cB}(h\inv \cdot x\inv, h\inv|_{x\inv})^*\bigr)\\
        &=
        \beta_{( h\inv|_{x\inv})\inv} \bigl(\sigma_{\cB}(\varepsilon\inv)^*\bigr).
    \end{align*}
    If we let $k = (h\inv|_{x\inv})\inv$, so that $h=k|_{k\inv\cdot x}$, then the above together with the covariance condition gives
    \[
        \pi\bigl( \sigma^*_{\cB} (\varepsilon\inv)\bigr) M_{h}
        =
        \pi\Bigl( 
            \beta_k \bigl(\sigma_{\cB} (\varepsilon\inv)^*\bigr)
        \Bigr)
        M_{k|_{k\inv\cdot x}}
        =
        M_{k} \pi (\sigma_{\cB} (\varepsilon\inv)^*),
    \]
    where we used that $p(\sigma_{\cB} (\varepsilon\inv)^*) = (h\inv \cdot x)\inv =k\inv\cdot x$ because $\sigma$ is a section.
    Since $M_{k}=M_{k\inv}^*= M_{\varepsilon\inv}^*$, since $\pi$ is a $*$-functor, and since $\Delta$ is a homomorphism,  we conclude all in all
    \begin{align*}
        &\inner{\xi}{L(\sigma^*)\zeta}
        =
        \int_{\cK}
        \sum_{\substack{\varepsilon\in (\cG\bowtie\cH)^v}}
        \Biginner{\xi(v)}{
             M_{\varepsilon\inv}^* \pi (\sigma_{\cB} (\varepsilon\inv))^*\bigl(\zeta(s(\varepsilon))\bigr)
        }
        \Delta(\varepsilon)^{-\frac{1}{2}}
        \,  d\mu(v)
        \\&\quad=
        \int_{\cU}
        \sum_{\substack{\varepsilon\in (\cG\bowtie\cH)^v}}
        \Biginner{\pi (\sigma_{\cB} (\varepsilon\inv)) M_{\varepsilon\inv}\bigl(\xi(v)\bigr)}{
             \zeta(s(\varepsilon))
        }
        \Delta(\varepsilon)^{-\frac{1}{2}}
        \,  d\mu(v)
        \\&\quad=
        \int_{\cK}
        \Biginner{\pi (\sigma_{\cB} (\varepsilon\inv)) M_{\varepsilon\inv}\bigl(\xi(s(\varepsilon\inv))\bigr)}{
             \zeta(r(\varepsilon\inv))
        }
        \Delta(\varepsilon\inv)^{\frac{1}{2}}
        \,  d\nu(\varepsilon)
        \\&\quad\overset{(*)}{=}
        \int_{\cK}
        \Biginner{\pi (\sigma_{\cB} (\varphi)) M_{\varphi}\bigl(\xi(s(\varphi))\bigr)}{
             \zeta(r(\varphi))
        }
        \Delta(\varphi)^{\frac{1}{2}}
        \Delta(\varphi)\inv
        \,  d\nu(\varphi)
        \\
        &\quad=\inner{\xi}{L(\sigma)^*\zeta},
    \end{align*}
    where $(*)$ holds by construction of $\Delta$.

    Since we have already seen that $L$ is $I$-norm decreasing, this concludes our proof.
\end{proof}

\begin{definition}
    A covariant representation $(\mu, \cU\ast\mscr{H},\hat{\pi}, \hat{M})$ of $(\cB, \beta)$ induces, by Theorem~\ref{thm:representations-of-ZS-Fellbdl}, a (not necessarily nondegenerate) $*$-representation of $C^*(\cB\bowtie_\beta\cH)$. We denote it by $\hat{\pi}\bowtie\hat{M}$ and call it the {\em integrated form} of $(\mu, \cU\ast\mscr{H},\hat{\pi}, \hat{M})$.
\end{definition}

\begin{theorem}\label{thm.disintegration} Suppose $\cB_u$ is unital for all $u\in \cU$, and let $L$ be an nondegenerate $I$-norm decreasing
$*$-representation of $\Gamma_c(\cG\bowtie \cH; \cB\bowtie_\beta \cH)$. Then there exists a covariant representation $(\mu, \cU\ast\mscr{H},\hat{\pi}, \hat{M})$ of $(\cB,\beta)$ such that $L$ is equivalent 
to the integrated form of $(\mu, \cU\ast\mscr{H},\hat{\pi}, \hat{M})$.
\end{theorem}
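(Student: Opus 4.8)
The plan is to push the whole problem onto the single Fell bundle $\cC:=\cB\bowtie_\beta\cH$ over $\cK:=\cG\bowtie\cH$ and feed it into the Muhly--Williams disintegration theorem \cite[Theorem 4.13]{MW2008}. Since $L$ is a nondegenerate $I$-norm decreasing $*$-representation of $\Gamma_c(\cK;\cC)$, that theorem produces a strict representation $(\mu,\cU\ast\mscr{H},\hat{\rho})$ of $\cC$ whose integrated form is equivalent to $L$. Here $\mu$ is quasi-invariant for the Haar system of counting measures on $\cK$ which, under the identification $\cK\z\cong\cU$ of \eqref{eq:s-and-r-on-SZ}, is exactly Condition~\ref{item:qim}; $\cU\ast\mscr{H}$ is a Borel Hilbert bundle over $\cU$ (giving \ref{item:BHbdl}); and $\hat{\rho}\colon\cC\to\End(\cU\ast\mscr{H})$ is a Borel $*$-functor. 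The data $\mu$ and $\cU\ast\mscr{H}$ serve directly as \ref{item:qim} and \ref{item:BHbdl} of the covariant representation we seek, so the task reduces to manufacturing $\hat{\pi}$ and $\hat{M}$ from $\hat{\rho}$.

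For this I would use the two embeddings of Proposition~\ref{prop.embed}. Set $\hat{\pi}:=\hat{\rho}\circ\Phi$, where $\Phi\colon\cB\to\cC$, $b\mapsto(b,s_\cB(b))$, is the isometric Fell-bundle homomorphism covariant with $\iota\colon\cG\to\cK$; as $\Phi$ is a continuous $*$-homomorphism and $\hat{\rho}$ a Borel $*$-functor, $\hat{\pi}$ is a Borel $*$-functor $\cB\to\End(\cU\ast\mscr{H})$, yielding \ref{item:star-functor}. For the groupoid part I use the unitality of $\cB_u$ to invoke the second embedding $\Psi\colon\cB(\cH)\to\cC$, $\Psi(z,h)=(z\,1_{r_\cH(h)},h)$, and define $\hat{M}_h:=\hat{\rho}(\Psi(1,h))$. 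Because $\Psi$ is a $*$-homomorphism of Fell bundles, $\Psi(1,h)^*=\Psi(1,h^{-1})$ and $\Psi(1,h)\bullet\Psi(1,h^{-1})=1_{\cC_{r_\cH(h)}}$, so $\Psi(1,h)$ is a fibrewise unitary; applying $\hat{\rho}$ and using that $\rho(1_{\cC_u})=\mathrm{id}_{H(u)}$ (see below) shows each $M_h$ is a unitary $H(s_\cH(h))\to H(r_\cH(h))$, that $h\mapsto\hat{M}_h$ is a Borel homomorphism, and that $\hat{M}_v=\mathrm{id}$ for $v\in\cU$; this gives \ref{item:gpd-rep}.

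The covariance identity \eqref{eq:def:covariance} is then purely formal. For $(h,b)$ in the relevant fibred product with $x:=p(b)$, one checks the bundle equality
\[
    \Psi(1,h)\bullet\Phi(b)=(\beta_h(b),h|_x)=\Phi(\beta_h(b))\bullet\Psi(1,h|_x)
\]
in $\cC$, using \condref{C2}, \condref{A3}, \condref{ZS6}, \condref{ZS7}, \ref{item:restriction-of-unit-is-unit}, and the unitality of the fibres. Applying the multiplicative functor $\hat{\rho}$ turns this into $\hat{M}_h\hat{\pi}(b)=\hat{\pi}(\beta(h,b))\hat{M}_{h|_{p(b)}}$, so $(\mu,\cU\ast\mscr{H},\hat{\pi},\hat{M})$ is a covariant representation of $(\cB,\beta)$. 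To identify its integrated form with $L$, I would verify that the formula of Theorem~\ref{thm:representations-of-ZS-Fellbdl} reproduces the Muhly--Williams integration formula for $\hat{\rho}$: for $\varepsilon=(x,h)$ and a section $\sigma$ the same fibrewise computation gives $\Phi(\sigma_\cB(\varepsilon))\bullet\Psi(1,h)=\sigma(\varepsilon)$, whence $\pi(\sigma_\cB(\varepsilon))M_\varepsilon=\rho(\sigma(\varepsilon))$; substituting this into the defining sum for $L(\sigma)$ recovers the integrated form of $\hat{\rho}$, which is $L$.

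The step I expect to be the main obstacle is precisely the passage from nondegeneracy of $L$ to nondegeneracy of the fibre representations $\rho_u$ of the \emph{unital} $C^*$-algebras $\cC_u$ (recall $\cC_u$ is unital since $\cB_u$ is, by Remark~\ref{rm.pos}). It is this that forces $\rho(1_{\cC_u})=\mathrm{id}_{H(u)}$ and so makes the candidate operators $M_h$ genuinely unitary rather than merely partially isometric; without it the family $\hat{M}$ would not take values in $\Iso(\cU\ast\mscr{H})$. Handling this carefully—most plausibly only $\mu$-almost everywhere, correcting on a $\mu$-null set, in the spirit of the `almost everywhere' caveat following Theorem~\ref{thm:representations-of-ZS-Fellbdl}—is the delicate point, and it is exactly where the unitality hypothesis on $\cB_u$ enters. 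Everything else is a routine consequence of the algebra of $\Phi$, $\Psi$, and the conditions \condref{A1}--\condref{A5}.
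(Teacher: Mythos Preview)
Your proposal is correct and follows essentially the same route as the paper: invoke \cite[Theorem 4.13]{MW2008} to disintegrate $L$ as the integrated form of a strict representation $\hat{\rho}$ of $\cC$, then set $\hat{\pi}:=\hat{\rho}\circ\Phi$ and $\hat{M}_h:=\hat{\rho}(\Psi(1,h))$ and verify covariance and the matching of integrated forms via the factorization $\sigma(\varepsilon)=\Phi(\sigma_\cB(\varepsilon))\bullet\Psi(1,h)$. The paper does exactly this, just writing the formulas $\pi(b)=\psi(b,s_\cB(b))$ and $M_h=\psi(1_{r_\cH(h)},h)$ directly rather than naming $\Phi$ and $\Psi$; it also glosses over the nondegeneracy point you flag (it simply asserts ``$M_h$ is clearly unitary with inverse $M_{h^{-1}}$''), so your caution there is if anything more careful than the original.
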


\begin{proof} By the disintegration theorem for  representations of Fell bundles \cite[Theorem 4.13]{MW2008}, there exists a strict representation $(\mu, \cU\ast\mscr{H},\hat{\psi})$ of $\cB\bowtie_\beta\cH$ such that for all $\sigma\in \Gamma_c(\cG\bowtie \cH; \cB\bowtie_\beta \cH)$,  $\xi\in \cL^2(\cU\ast\mscr{H},\mu) $,  and $v\in \cU$, 
\begin{equation}\label{eq:disintegrated}
\tilde{L}(\sigma)\xi(v)=\sum_{(x,h)\in (\cG\bowtie \cH)^v} \psi(\sigma(x,h))\xi(s_\cH(h)) \Delta(x,h)^{-\frac{1}{2}},\end{equation}
and $L$ is unitarily equivalent to this integrated form $\tilde{L}$.
For each $b\in \cB$, define $\pi(b)\colon H(s_\cB(b))\to H(r_\cB(b))$ by $\pi(b)=\psi(b,s_\cB(b))$. For each $h\in \cH$, define $M_h\colon H(s_\cH(h))\to H(r_\cH(h))$ by $M_h=\psi(1_{r_\cH(h)},h)$, where $1_{r_\cH(h)}$ is the unit in the unital $C^*$-algebra $\cB_{r_\cH(h)}$. We first prove that $(\mu, \cU\ast\mscr{H},\hat{\pi}, \hat{M})$ is a covariant representation of $(\cB,\beta)$, so let us verify that this quadruple satisfies all the conditions in Definition \ref{def:cov-rep}.

For Conditions~\ref{item:qim} and \ref{item:BHbdl}, since $(\mu, \cU\ast \mscr{H},\psi)$ is a strict representation of  $\cB\bowtie_\beta\cH$, a Fell bundle over $\cG\bowtie \cH$, we automatically have that $\mu$ is quasi-invariant and $\cU\ast\mscr{H}$ is a Borel Hilbert bundle. For any $(a,b)\in\cB^{(2)}$, we have that
\[\pi(a)\pi(b)=\psi(a,s_\cB(a)) \psi(b, s_\cB(b))=\psi(ab,s_\cB(b))=\psi(ab,s_\cB(ab))=\pi(ab).\]
It then follows from the fact that $\psi$ is a Borel $*$-functor that $\pi$ is also a Borel $*$-functor, showing Condition~\ref{item:star-functor}. For each $h\in \cH$, $M_h=\psi(1_{r_\cH(h)},h)$ is a map from $H(s_\cH(h))$ to $H(r_\cH(h))$, and $M_h$ is clearly unitary with inverse $M_{h^{-1}}$.  For any $h,k\in \cH$,
\begin{align*}
    M_h M_k &= \psi(1_{r_\cH(h)},h)\psi(1_{r_\cH(k)},k) \\
    &= \psi(1_{r_\cH(h)}\beta_h(1_{r_\cH(k)}), hk)\\
    &=\psi(1_{r_\cH(h)}, hk)=\psi(1_{r_\cH(hk)}, hk)=M_{hk},
\end{align*}
which shows Condition~\ref{item:gpd-rep}. Finally, we have to check that Equation~\ref{eq:def:covariance} holds, so take any $(h,b)\in\cH \tensor*[_{s_{\cH}}]{\times}{_{r_{\cB}}} \cB$ and compute
\begin{align*}
    M_h \pi(b) &= \psi(1_{r_\cH(h)},h) \psi(b, s_\cB(b)) \\
    &= \psi(\beta(h,b),h|_{p(b)}) \\
    &= \psi(\beta(h,b), s_\cB(\beta(h,b)))\psi(1_{r_\cH(h|_{p(b)})}, h|_{p(b)}) \\
    &= \pi(\beta(h,b)) M_{h|_{p(b)}}.
\end{align*}
Therefore,  $(\mu, \cU\ast\mscr{H},\hat{\pi}, \hat{M})$ is a covariant representation. 

Now, by the definition of $\cB\bowtie_\beta\cH$, \[\sigma(x,h)=(\sigma_\cB(x,h),h)=(\sigma_\cB(x,h),r_\cH(h))(1_{r_\cH(h)},h).\] 
Since $\hat{\psi}$ is a Borel $*$-functor, we thus have that 
\[\psi(\sigma(x,h))=\psi(\sigma_\cB(x,h),r_\cH(h))\psi(1_{r_\cH(h)},h)=\pi(\sigma_\cB(x,h))M_h.\]
Therefore, by Equation~\eqref{eq:disintegrated},
\begin{align*}\tilde{L}(\sigma)\xi(v)&=\sum_{(x,h)\in (\cG\bowtie \cH)^v} \pi(\sigma_\cB(x,h))M_h\xi(s_\cH(h)) \Delta(x,h)^{-\frac{1}{2}}
\\&
= (\hat{\pi}\bowtie\hat{M})(\sigma)\xi (v).\qedhere\end{align*}
This shows that the integrated form of $(\mu, \cU\ast\mscr{H},\hat{\pi}, \hat{M})$ is exactly $\tilde{L}$, which is unitarily equivalent to $L$.
\end{proof}

\section{$C^*$-blend}

In the case of the \ZS product of \etale groupoids and their $C^*$-algebras, it is known that one can find $*$-homomorphisms $i\colon C^*(\cG)\to C^*(\cG\bowtie \cH)$ and $j\colon C^*(\cH)\to C^*(\cB\bowtie \cH)$ such that $(C^*(\cG), C^*(\cH), i, j, C^*(\cG\bowtie \cH))$ is a $C^*$-blend \cite[Theorem 13]{BPRRW2017} in the sense of Exel \cite{Exel2013}. Notice that the groupoid $C^*$-algebra $C^*(\cG)$ is the same as the universal $C^*$-algebra of the groupoid Fell bundle 
$C^*(\mathbb{C}\times \cG)$ for an \etale groupoid $\cG$ (see \cite[Appendix B]{MW2008}).
We have shown in Example \ref{ex.ZSbundle} that the \ZS product of the groupoid Fell bundle $(\mathbb{C}\times \cG)\bowtie \cH$ is the same as the groupoid Fell bundle of the \ZS product $\mathbb{C}\times (\cG\bowtie \cH)$. This alludes to a generalization of the result of Brownlowe et al to \ZS products of Fell bundles.

Recall the definition of $C^*$-blend \cite{Exel2013}:

\pagebreak[3]\begin{definition}\label{def:blend} A {\em $C^*$-blend} is a quintuple $(A_1,A_2,i,j,X)$ where
\begin{enumerate}[label=\textup{(\arabic*)}]
\item $A_1, A_2, X$ are $C^*$-algebras.
\item $i\colon A_1\to \cM(X)$ and $j\colon A_2\to \cM(X)$ are $*$-homomorphisms.
\item\label{item:blend-dense} Define linear maps $i\odot j\colon A_1\otimes_\mathbb{C} A_2 \to \cM(X)$ and $j\odot i\colon A_2\otimes_\mathbb{C} A_1 \to \cM(X)$ on the algebraic tensor products by
\[i\odot j(a\otimes b)=i(a)j(b); j\odot i(b\otimes a)=j(b) i(a).\]
Then the ranges of $i\odot j$ and $j\odot i$ are both dense in $X$. 
\end{enumerate}
\end{definition} 

As pointed out in \cite{Exel2013}, the range of $i\odot j$ is dense if and only if the range of $j\odot i$ is dense, because their ranges are adjoints of each other.

Assume $(\cG,\cH)$ is a matched pair of \etale groupoids and let $\cB=(B,p)$ be a Fell bundle over $\cG$ with a $(\cG,\cH)$-compatible $\cH$-action $\beta$. By Proposition \ref{prop.blendG}, one can build a $*$-homomorphism $i\colon C^*(\cB)\to C^*(\cB\bowtie_\beta \cH)$. We would like to find a $*$-homomorphism $j\colon C^*(\cH)\to C^*(\cB\bowtie_\beta \cH)$.
In order to embed $\cH$ in the \ZS product bundle $\cB\bowtie_\beta\cH$, 
we assume that $\cB_u$ is a unital $C^*$-algebra for all $u\in \cG\z$.

\begin{lemma}\label{lem:Psi}
 Assume that $\cB_u$ is unital for all $u\in \cG\z$. 
 We define for  $f\in C_c (\cH)$ and $(x,h)\in\cG\bowtie\cH,$
\[
    j (f) (x,h)
    =
    \left\{
    \begin{array}{ll}    
        (f(h) 1_x, h)&  \text{if } x=r_{\cH}(h),\\
        (0_{x}, h) & \text{else.}
    \end{array} 
    \right.
\]
Then $j (f)$ is an element of $\Gamma_c (\cG\bowtie\cH; \cB\bowtie_{\beta}\cH)$.
\end{lemma}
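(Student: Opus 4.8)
The plan is to verify, in turn, the three defining features of an element of $\Gamma_c(\cG\bowtie\cH;\cB\bowtie_\beta\cH)$---that $j(f)$ is a section of $q$, that it is continuous, and that it is compactly supported---closely following the template of Lemma~\ref{lem:blendG}. For the section property, the first thing I would record is that, for $(x,h)\in\cG\bowtie\cH$, the case distinction $x=r_{\cH}(h)$ is the same as requiring $x\in\cG\z$: membership in $\cG\bowtie\cH$ forces $s_{\cG}(x)=r_{\cH}(h)$, so $x=r_{\cH}(h)$ yields $x=s_{\cG}(x)\in\cG\z$, and the converse is immediate. Consequently $\cB_x$ is a unital $C^*$-algebra in the first case and $1_x$ is meaningful, while in either case one checks the constraint defining $C=B\tensor*[_{s_{\cB}}]{\times}{_{r_{\cH}}}\cH$, namely $s_{\cB}(f(h)1_x)=s_{\cG}(x)=r_{\cH}(h)$ resp.\ $s_{\cB}(0_x)=s_{\cG}(x)=r_{\cH}(h)$, together with $q(j(f)(x,h))=(x,h)$. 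This step is routine.

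For continuity I would decompose $\cG\bowtie\cH$ along the set $V:=\mathrm{pr}_{\cG}\inv(\cG\z)=\{(x,h):x\in\cG\z\}$, which is clopen because $\cG\z$ is clopen in the Hausdorff \etale groupoid $\cG$ and $\mathrm{pr}_{\cG}$ is continuous. On the open set $V$ the section $j(f)$ agrees with $(x,h)\mapsto\Psi(f(h),h)$, where $\Psi$ is the isometric homomorphism from Proposition~\ref{prop.embed}; since $f$ and $\Psi$ are continuous, $j(f)|_V$ is continuous. On the complementary open set $V^c$ the section $j(f)$ is the zero section $(x,h)\mapsto(0_x,h)$, which is continuous by \ref{cond.B-nets} of Definition~\ref{def.USCBdl}. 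Pasting over the open cover $\{V,V^c\}$ gives continuity of $j(f)$. The only genuinely non-routine point---and the main obstacle---is continuity at points whose $\cG$-coordinate is a unit: a direct net argument would there require $1_{r_{\cH}(h_i)}\to 1_{r_{\cH}(h)}$ as $h_i\to h$, i.e.\ continuity of the unit section $u\mapsto 1_u$. This is exactly what the unitality hypothesis on $\cB$ supplies, and it is already built into the continuity of $\Psi$, which is why routing the argument through $\Psi$ disposes of the difficulty cleanly.

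Finally, for compact support I would observe that $j(f)(x,h)\neq 0$ forces $x=r_{\cH}(h)$ and $h\in\supp f$, so that $\{j(f)\neq 0\}$ is contained in $K:=\{(r_{\cH}(h),h):h\in\supp f\}$. As $h\mapsto(r_{\cH}(h),h)$ is continuous with values in $\cG\bowtie\cH$ (note $r_{\cH}(h)$ is a unit, so $s_{\cG}(r_{\cH}(h))=r_{\cH}(h)$), the set $K$ is the continuous image of the compact set $\supp f$ and hence compact, so it is closed in the Hausdorff space $\cG\bowtie\cH$. Therefore $\supp j(f)$ is a closed subset of the compact set $K$ and is itself compact, completing the verification that $j(f)\in\Gamma_c(\cG\bowtie\cH;\cB\bowtie_\beta\cH)$.
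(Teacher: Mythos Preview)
Your proof is correct and follows essentially the same three-step template as the paper's (section, continuity, compact support), with two minor organizational differences worth noting. For continuity, the paper runs a direct net argument using that $\cG\z$ is clopen, whereas you package the same idea as a clopen decomposition and route the nontrivial branch through the continuity of $\Psi$ from Proposition~\ref{prop.embed}; this has the virtue of isolating the ``continuity of $u\mapsto 1_u$'' issue and delegating it to a result already on record. For compact support, the paper bounds $\supp j(f)$ by the compact set $\bigl(s_{\cH}(\supp f)\times\supp f\bigr)\cap(\cG\bowtie\cH)$, while you use the tighter set $K=\{(r_{\cH}(h),h):h\in\supp f\}$, the continuous image of $\supp f$; both arguments are straightforward and yield the same conclusion.
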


\begin{proof}
    Clearly, $j (f)$ is a section. To see that it is compactly supported, note that $\supp (f)$ is contained in $\bigl(s_{\cH}(\supp(f))\times \supp(f)\bigr) \cap \cG\bowtie\cH$, which is compact since $\supp(f)\subset \cH$ is compact and since $s_{\cH}$ is continuous. To see that $j (f)$ is continuous, we use the same argument as in the proof of Proposition~\ref{prop.blendG}, so suppose the net $(x_i, h_i)$ converges to $(x,h)$ in $\cG\bowtie\cH$. If $x=r_{\cH}(h)$, then openness of $\cG\z$ implies that $x_i\in \cG\z$ for large $i$, in which case Condition \ref{cond.B-nets} for the upper semi-continuous bundle $\cB\bowtie_{\beta}\cH$ implies that
    \[
        j (f) (x_i,h_i)
        =
        (f(h_i) 1_{x_i}, h_i)
         \text{ converges to }
        (f(h) 1_x, h)
        =
        j (f) (x,h)
        .
    \]
    If $x\neq r_{\cH}(h)$, then closedness of $\cG\z$ implies that $x_i\not\in \cG\z$ for large $i$, in which case Condition \ref{cond.B-nets} again implies that
    \[
        j (f) (x_i,h_i)
        =
        (0_{x_i}, h_i)
         \text{ converges to }
        (0_{x}, h)
        =
        j (f) (x,h).\qedhere
    \]
\end{proof}

\begin{proposition}\label{prop.blendH}
 Assume that $\cB_u$ is unital for all $u\in \cG\z$. The map $f\mapsto j (f)$ defined in Lemma~\ref{lem:Psi} is a $*$-algebra homomorphism and extends to $j\colon C^*(\cH) \to C^*(\cB\bowtie_\beta \cH)$.
\end{proposition}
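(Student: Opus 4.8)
The plan is to follow the template of Proposition~\ref{prop.blendG}: first verify that $f\mapsto j(f)$ is a $*$-algebra homomorphism from $C_c(\cH)$ into $\Gamma_c(\cG\bowtie\cH;\cB\bowtie_\beta\cH)$, and then bound the $I$-norm of $j(f)$ by that of $f$ so the map descends to the universal completions. Linearity is immediate from the defining formula in Lemma~\ref{lem:Psi}, so the substance lies in the $*$- and multiplicativity checks and the final norm estimate. The standing hypothesis that each $\cB_u$ is unital enters through Proposition~\ref{prop.A6}, which guarantees that every $\beta_h$ is a \emph{unital} $*$-isomorphism $\cB_{s_\cH(h)}\to\cB_{r_\cH(h)}$, i.e.\ $\beta_h(1_{s_\cH(h)})=1_{r_\cH(h)}$; this is the feature that makes the scalar algebra of the units $1_x$ behave correctly.

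For $*$-preservation I would compute $j(f)^*(x,h)=[j(f)((x,h)^{-1})]^*$ with $(x,h)^{-1}=(h^{-1}\cdot x^{-1},h^{-1}|_{x^{-1}})$ as in \eqref{eq:inv-on-ZS-gpd}. The crucial bookkeeping observation is that, since $r_\cH(h^{-1}|_{x^{-1}})=s_\cG(h^{-1}\cdot x^{-1})$ by \condref{ZS7}, the first coordinate $h^{-1}\cdot x^{-1}$ is a unit precisely when $x=r_\cH(h)$; thus $j(f)^*$ and $j(f^*)$ have matching support. On the surviving branch one reduces $h^{-1}\cdot x^{-1}$ and $h^{-1}|_{x^{-1}}$ to $s_\cH(h)$ resp.\ $h^{-1}$ using \ref{item:acting-on-unit-is-unit} and \condref{ZS9}, applies the involution formula \ref{cond.C3}, and invokes Proposition~\ref{prop.A6}: since $\beta_h$ is unital, it sends $\overline{f(h^{-1})}\,1_{s_\cH(h)}$ to $\overline{f(h^{-1})}\,1_x=f^*(h)\,1_x$, yielding $j(f)^*=j(f^*)$.

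The main obstacle is multiplicativity. I would expand $(j(f)\square j(g))(x,h)$ over $(y,k)\in(\cG\bowtie\cH)^{r(x,h)}$ via \eqref{eq:star-alg-structure-of-sections}, and note that $j(f)(y,k)$ forces $y=r_\cH(k)$ to be a unit, hence $y=v:=r_\cG(x)$ and $k\in\cH^v$. Simplifying $(v,k)^{-1}(x,h)$ with \condref{ZS8}--\ref{item:acting-on-unit-is-unit} gives $(k^{-1}\cdot x,\,k^{-1}|_x h)$, and $j(g)$ in turn forces $k^{-1}\cdot x$ to be a unit, which (for $k\in\cH^v$) happens exactly when $x$ itself is a unit. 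When $x$ is not a unit, every summand vanishes, matching $j(f*g)(x,h)=(0_x,h)$. When $x=v$ is a unit, \condref{ZS9} and \ref{item:acting-on-unit-is-unit} collapse the $\cH$-arguments to $k^{-1}h$, and the product formula \ref{cond.C2} together with $\beta_k(1_{s_\cH(k)})=1_v$ turns each summand into $(f(k)\,g(k^{-1}h)\,1_v,h)$; summing over $k\in\cH^v$ recovers the convolution $(f*g)(h)\,1_x$ in the first coordinate, which is exactly $j(f*g)(x,h)$. The delicate part here is the vanishing/support bookkeeping combined with the repeated use of the $\cG$-restriction identities, and again it is the unitality of the $\beta_k$ that lets the scalars multiply cleanly.

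Finally, to extend $j$ to the completions I would estimate the $I$-norm. Since $\norm{(b,h)}_{\cB\bowtie_\beta\cH}=\norm{b}$ and $\norm{1_x}\le 1$, the section $j(f)$ is supported on pairs $(x,h)$ with $x$ a unit, where $\norm{j(f)(x,h)}=\abs{f(h)}\,\norm{1_x}\le\abs{f(h)}$; summing over the relevant source- and range-fibres gives $\norm{j(f)}_{I,s}\le\norm{f}_{I,s}$ and $\norm{j(f)}_{I,r}\le\norm{f}_{I,r}$, hence $\norm{j(f)}_I\le\norm{f}_I$. From here the argument is identical to the end of Proposition~\ref{prop.blendG}: any $I$-norm decreasing $*$-representation $L$ of $\Gamma_c(\cG\bowtie\cH;\cB\bowtie_\beta\cH)$ composes with $j$ to an $I$-norm decreasing $*$-representation $L\circ j$ of $C_c(\cH)$, whence $\norm{j(f)}_{C^*(\cB\bowtie_\beta\cH)}\le\norm{f}_{C^*(\cH)}$ and $j$ extends to the desired $*$-homomorphism.
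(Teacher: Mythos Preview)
Your proposal is correct and follows essentially the same approach as the paper: linearity is clear, the $*$- and multiplicativity checks are carried out by expanding the convolution and involution formulas and tracking when the first coordinate is forced to be a unit (using the \ZS identities and the unitality of $\beta_h$ from Proposition~\ref{prop.A6}), and the extension to the completions comes from the $I$-norm bound $\norm{j(f)}_I\le\norm{f}_I$ exactly as in Proposition~\ref{prop.blendG}. If anything, your multiplicativity computation is slightly more careful than the paper's, since you correctly write $(v,k)^{-1}(x,h)=(k^{-1}\cdot x,\,k^{-1}|_x h)$ before concluding that $x$ must be a unit.
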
 

\begin{proof}
    Clearly, $j $ is linear.
    To see that it is multiplicative, we point out that for $(x,h)\in\cK:= \cG\bowtie\cH$ with $v=r_{\cG}(x), u= s_{\cG}(x)$, we have 
    \begin{align*}
        j (f_{1}) \ast j (f_{2}) (x,h)
        &=
        \sum_{(y,k)\in (\cG\bowtie\cH)^{v}} j (f_{1}) (y,k) \,j ( f_{2}) \bigl((y,k)\inv (x,h)\bigr).
    \end{align*}
    Only if $y=v$, the factor $j (f_{1}) (y,k)$ does not vanish, in which case $(y,k)\inv (x,h) = (x,k\inv h)$. The factor $j ( f_{2}) ((y,k)\inv (x,h))$ vanishes unless we also have $x=u$, so that
    \begin{align*}
        j (f_{1}) \ast j (f_{2}) (x,h)
        &=
        \left\{
             \begin{array}{ll}
            \sum_{k\in \cH^{u}}  (f_{1}(k) 1_u,k) \, (f_{2}(k\inv h)1_u,k\inv h),&  \text{if } x=u\\
            (0_{x}, h), & \text{else.}
        \end{array} 
        \right.
    \end{align*}
    By \ref{cond.C2}, we have
    \begin{align*}
        (f_{1}(k) 1_u,k) \, (f_{2}(k\inv h)1_u,k\inv h)
        &=
        (f_{1}(k) 1_u\beta_k(f_{2}(k\inv h)1_u), k|_{p(1_u)} (k\inv h))
        \\&=
        (f_{1}(k)f_{2}(k\inv h)1_u, h)
        ,
    \end{align*}
    so that 
     \begin{align*}
        j (f_{1}) \ast j (f_{2}) (x,h)
        &=
        \left\{
             \begin{array}{ll}   
            \bigl((f_{1}\ast f_{2})(h)\, 1_u, h\bigr),&  \text{if } x=u\\
            (0_{x}, h), & \text{else.}
        \end{array} 
        \right\}
        =
        j (f_{1}\ast f_{2}) (x,h).
    \end{align*}
    To see that $j $ is $*$-preserving,
    recall that
    \begin{align*}
        j  (f)^* (x,h)
        &\overset{\eqref{eq:star-alg-structure-of-sections}}{=}
        \bigl[j  (f) ((x,h)\inv)\bigr]^*
        \overset{\eqref{eq:inv-on-ZS-gpd}}{=}
        \bigl[j  (f) (h^{-1}\cdot x^{-1}, h^{-1}|_{x^{-1}})\bigr]^*.
    \end{align*}
    Note that $x\notin\cG\z$ if and only if $h^{-1}\cdot x^{-1}\notin\cG\z$ (cf.\ to our argument after Equation~\eqref{eq:pitstop}), in which case we  have $j (f)^*(x,h)= (0_x,h)=j (f^*)(x,h)$, simply because $j (f^*)$ and $j (f)^*$ are both sections.
    On the other hand, if $x=r_{\cH}(h)=u\in\cG\z$, then
    \begin{align*}
        j  (f)^* (x,h)
        &=
        \bigl[j  (f) (h^{-1}\cdot u, h^{-1}|_{u})\bigr]^*
        \overset{\ref{item:acting-on-unit-is-unit}}{=}
        \bigl[j  (f) (r_{\cH}(h\inv), h^{-1})\bigr]^*
        \\
        &=
        (f(h\inv) 1_{r_{\cH}(h\inv)}, h\inv)^*
        \overset{\ref{cond.C3}}{=}
        (\overline{f(h\inv)} 1_{h\cdot r_{\cH}(h\inv)}, h)
        \\&\overset{\ref{item:acting-on-unit-is-unit}}{=}
        (\overline{f(h\inv)} 1_{r_{\cH}(h)}, h)
        =
        j  (f^* )(x,h).
    \end{align*}
    It remains to see that $j$ extends. 
    Since
    \[
        \norm{(f(h)1_{v}, h)}
        =
        \abs{f(h)}\,
        \norm{1_{v}}
        =
        \abs{f(h)},
    \]
    one can argue, {\em mutatis mutandis}, as in Proposition~\ref{prop.blendG} that 
    $\norm{j(f)}_I\leq \norm{f}_I$ and thus $ \norm{j(f)}_{C^*(\cC)} \leq \norm{f}_{C^*(\cH)}$,
    which proves that $j$ extends.
\end{proof}

\begin{theorem}\label{thm.blend} The quintuple $(C^*(\cB), C^*(\cH), i, j, C^*(\cB\bowtie_\beta \cH))$ is a $C^*$-blend, where $i$ is given by Proposition \ref{prop.blendG}, and $j$ is given by Proposition \ref{prop.blendH}. 
\end{theorem}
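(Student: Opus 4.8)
The plan is to verify the three clauses of Definition~\ref{def:blend}. Clauses (1) and (2) are immediate: $C^*(\cB)$, $C^*(\cH)$, and $X:=C^*(\cB\bowtie_\beta\cH)$ are $C^*$-algebras by construction, and Propositions~\ref{prop.blendG} and~\ref{prop.blendH} already furnish $*$-homomorphisms $i$ and $j$ into $X$, which we regard as taking values in $\cM(X)$ via the canonical inclusion $X\hookrightarrow\cM(X)$. The entire content is therefore clause (3), and by the adjoint remark following Definition~\ref{def:blend} it suffices to show that the range of $i\odot j$ is dense, i.e.\ that the closed linear span of $\{i(a)j(b): a\in C^*(\cB),\, b\in C^*(\cH)\}$ equals $X$.

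Since $\Gamma_c(\cG;\cB)$ is dense in $C^*(\cB)$ and $C_c(\cH)$ in $C^*(\cH)$, and $i,j$ are norm-continuous, this closed span coincides with the closed span of $\{i(\sigma)\square j(f): \sigma\in\Gamma_c(\cG;\cB),\ f\in C_c(\cH)\}$. First I would identify this generating set by a direct convolution computation. Writing $v=r_\cG(x)$ and using \eqref{eq:star-alg-structure-of-sections}, \eqref{eq:inv-on-ZS-gpd}, and \eqref{eq:s-and-r-on-SZ}, together with the facts $(y,s_\cG(y))^{-1}=(y\inv,r_\cG(y))$ and $(y\inv,r_\cG(y))(x,h)=(y\inv x,h)$, one checks that in the sum defining $(i(\sigma)\square j(f))(x,h)$ the only nonvanishing index is $y=x$ with $k=s_\cG(x)$; evaluating that single term with \ref{cond.C2}, the identities $\beta_u=\mathrm{id}$ and $u|_u=u$, and the fact that $1_{s_\cG(x)}$ acts as a right identity on $\cB_x$, yields the clean formula
\[
  (i(\sigma)\square j(f))(x,h)=\bigl(f(h)\,\sigma(x),\,h\bigr).
\]
Thus the generators are exactly the ``separated'' sections $(x,h)\mapsto\bigl(f(h)\sigma(x),h\bigr)$.

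It then remains to prove that these separated sections span a subspace $\cA$ that is dense in $\Gamma_c(\cG\bowtie\cH;\cB\bowtie_\beta\cH)$ in the inductive limit topology; since for an \etale groupoid inductive-limit convergence (uniform convergence with supports in a fixed compact set) implies $I$-norm convergence, and $\norm{\cdot}_{C^*}\leq\norm{\cdot}_{I}$, such density upgrades to density in $X$. For this I would invoke a bundle version of Stone--Weierstrass. The displayed formula shows that $\cA$ is a module over the function algebra $\cD:=\lspan\{(x,h)\mapsto\phi_1(x)\phi_2(h): \phi_1\in C_c(\cG),\ \phi_2\in C_c(\cH)\}$, because the pointwise action of $\phi_1(x)\phi_2(h)$ sends $i(\sigma)\square j(f)$ to $i(\phi_1\sigma)\square j(\phi_2 f)$, again in $\cA$. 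Moreover $\cA$ is fibrewise dense: at a point $(x,h)$ one may choose $f$ with $f(h)=1$ and let $\sigma$ range over $\Gamma_c(\cG;\cB)$, whose values sweep out a dense subspace of the fibre $\cC_{(x,h)}\cong\cB_x$ by the existence of enough continuous sections. Since $\cD$ is a self-adjoint subalgebra that separates points of $\cG\bowtie\cH$ and vanishes nowhere, it is dense in $C_0(\cG\bowtie\cH)$; the standard approximation theorem for sections of upper semi-continuous bundles then shows that the $C_c(\cG\bowtie\cH)$-module generated by $\cA$ is inductive-limit dense in $\Gamma_c(\cG\bowtie\cH;\cB\bowtie_\beta\cH)$, and a short approximation of $C_c(\cG\bowtie\cH)$-coefficients by elements of $\cD$ (with supports controlled by a fixed cutoff) shows that $\cA$ is in turn dense in that module.

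I expect the main obstacle to be this last step: making the passage from ``fibrewise dense module'' to ``inductive-limit dense'' fully rigorous in the bundle setting, keeping careful track of supports so that uniform approximations become $I$-norm and hence $C^*$-norm approximations, and correctly arguing that the separated coefficients $\cD$ (a proper subalgebra of $C_c(\cG\bowtie\cH)$) still suffice. The preliminary convolution identity is routine but must be handled with care, since it is exactly where unitality of the fibres $\cB_u$ (assumed throughout via $j$) and the \ZS identities \condref{ZS8} and \ref{item:restriction-of-unit-is-unit} enter.
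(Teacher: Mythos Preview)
Your proposal is correct and follows essentially the same route as the paper: the same convolution identity $(i(\sigma)\square j(f))(x,h)=(f(h)\sigma(x),h)$, the same $\cD$-module structure and fibrewise density feeding into a Stone--Weierstrass/approximation argument (packaged in the paper as Lemmas~\ref{lem:dense-sections} and~\ref{lem:ioj-range-uniformly-dense}), and the same identification of the uniform-to-$C^*$-norm passage as the crux. The paper resolves that last obstacle exactly as you anticipate---by a partition of unity reducing $\mf{t}$ to a section supported in a small bisection $U\bowtie V$, multiplying the uniform approximant by cutoffs $f_1\bowtie f_2$ to control its support inside a slightly larger bisection, and then using that on a bisection the $I$-norm coincides with the sup norm.
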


By Propositions \ref{prop.blendG} and \ref{prop.blendH}, $i$ and $j$ are $*$-homomorphisms to  $C^* (\cB\bowtie_{\beta}\cH)
$, and we want to show that they satisfy \ref{item:blend-dense} of Definition~\ref{def:blend}. For the proof, we will need the following helpful lemma.
\begin{lemma}\label{lem:dense-sections}
    Suppose $\cC=(C,q)$ is an upper semi-continuous Banach bundle over some locally compact Hausdorff space $X$, and let $\Gamma \subset \Gamma_0 (X;\cC)$ and $\mathfrak{X}\subset C_0 (X)$ be subspaces such that
    \begin{enumerate}[label=\textup{(\arabic*)}]
        \item\label{assu.Xdense} $\mathfrak{X}$ is uniformly dense,
        \item\label{assu.module} if $\mathfrak{x}\in\mathfrak{X}$ and $\mathfrak{s}\in\Gamma$, then their \textup(pointwise\textup) product $\mathfrak{x}\mf{s}$ is in $\Gamma$, and
        \item\label{assu.ptwdense} for each $x\in X$, the set $\Gamma(x):=\{\mf{s}(x)\,\vert\, \mf{s}\in\Gamma\}$ is dense in $\cC_x$.
    \end{enumerate}
    Then $\Gamma$ is uniformly dense in $\Gamma_0 (X;\cC)$.
\end{lemma}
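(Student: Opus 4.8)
The plan is to prove this Stone--Weierstrass--type statement by a partition-of-unity patching argument, after first reducing to the case in which the algebra of functions is all of $C_0(X)$. Write $\overline{\Gamma}$ for the uniform closure of $\Gamma$ in $\Gamma_0(X;\cC)$. Since the conclusion ``$\Gamma$ is uniformly dense'' means exactly $\overline{\Gamma}=\Gamma_0(X;\cC)$, and since $\overline{\Gamma}(x)\supseteq\Gamma(x)$ is dense in $\cC_x$ by \ref{assu.ptwdense}, the only new point to check is that $\overline\Gamma$ is a $C_0(X)$-submodule. For $\varphi\in C_0(X)$ and $\mathfrak{s}\in\Gamma$, assumption \ref{assu.Xdense} supplies $\mathfrak{x}_k\in\mathfrak{X}$ with $\mathfrak{x}_k\to\varphi$ uniformly; then $\mathfrak{x}_k\mathfrak{s}\in\Gamma$ by \ref{assu.module}, and $\norm{\mathfrak{x}_k\mathfrak{s}-\varphi\mathfrak{s}}_\infty\le\norm{\mathfrak{x}_k-\varphi}_\infty\norm{\mathfrak{s}}_\infty\to 0$, so $\varphi\mathfrak{s}\in\overline\Gamma$; approximating an arbitrary element of $\overline\Gamma$ by elements of $\Gamma$ then yields $C_0(X)\cdot\overline\Gamma\subseteq\overline\Gamma$. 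Hence I may assume from now on that $\Gamma$ is a uniformly closed $C_0(X)$-submodule of $\Gamma_0(X;\cC)$ with $\Gamma(x)$ dense in $\cC_x$ for every $x$, and the task becomes to show $\Gamma=\Gamma_0(X;\cC)$.

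Fix $\mathfrak{f}\in\Gamma_0(X;\cC)$ and $\epsilon>0$; I must produce $\mathfrak{t}\in\Gamma$ with $\norm{\mathfrak{f}-\mathfrak{t}}_\infty\le\epsilon$. For each $x\in X$, density of $\Gamma(x)$ gives $\mathfrak{s}_x\in\Gamma$ with $\norm{\mathfrak{s}_x(x)-\mathfrak{f}(x)}<\epsilon$. The section $\mathfrak{s}_x-\mathfrak{f}$ is continuous by \ref{cond.B-plus} and \ref{cond.B-times}, so by the upper semi-continuity \ref{cond.B-USC} of the norm the set $U_x:=\{y\in X:\norm{\mathfrak{s}_x(y)-\mathfrak{f}(y)}<\epsilon\}$ is open and contains $x$. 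Because $\mathfrak{f}$ vanishes at infinity, $K:=\{y:\norm{\mathfrak{f}(y)}\ge\epsilon\}$ is compact, and $U_0:=X\setminus K=\{y:\norm{\mathfrak{f}(y)}<\epsilon\}$ is open; it is precisely the set on which the zero section $\mathfrak{s}_{x_0}:=0\in\Gamma$ already approximates $\mathfrak{f}$ to within $\epsilon$. This is the device by which the behaviour of $\mathfrak{f}$ near infinity will be absorbed.

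Next I would patch. Cover the compact set $K$ by finitely many $U_{x_1},\dots,U_{x_n}$ and invoke the standard partition of unity for a compact subset of a locally compact Hausdorff space to obtain $\varphi_1,\dots,\varphi_n\in C_c(X)$ with $0\le\varphi_i$, $\supp\varphi_i\subset U_{x_i}$, $\sum_{i=1}^n\varphi_i\le 1$, and $\sum_{i=1}^n\varphi_i=1$ on $K$. Set $\varphi_0:=1-\sum_{i=1}^n\varphi_i$, so $0\le\varphi_0\le1$, $\varphi_0\equiv 0$ on $K$, and $\sum_{i=0}^n\varphi_i\equiv 1$. Define
\[
    \mathfrak{t}:=\sum_{i=1}^n\varphi_i\,\mathfrak{s}_{x_i},
\]
which lies in $\Gamma$ by the module property, as each $\varphi_i\in C_c(X)\subset C_0(X)$. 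At any $y\in X$ we have $\mathfrak{f}(y)-\mathfrak{t}(y)=\sum_{i=0}^n\varphi_i(y)\bigl(\mathfrak{f}(y)-\mathfrak{s}_{x_i}(y)\bigr)$, and therefore
\[
    \norm{\mathfrak{f}(y)-\mathfrak{t}(y)}\le\sum_{i=0}^n\varphi_i(y)\,\norm{\mathfrak{f}(y)-\mathfrak{s}_{x_i}(y)}.
\]
For $i\ge 1$ a nonzero summand forces $y\in\supp\varphi_i\subset U_{x_i}$, so $\norm{\mathfrak{f}(y)-\mathfrak{s}_{x_i}(y)}<\epsilon$; for $i=0$ a nonzero summand forces $y\notin K$, so $\norm{\mathfrak{f}(y)-\mathfrak{s}_{x_0}(y)}=\norm{\mathfrak{f}(y)}<\epsilon$. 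Hence the right-hand side is at most $\epsilon\sum_{i=0}^n\varphi_i(y)=\epsilon$, giving $\norm{\mathfrak{f}-\mathfrak{t}}_\infty\le\epsilon$ and completing the argument.

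The main obstacle is the interaction between non-compactness and the \emph{direction} of the semi-continuity in \ref{cond.B-USC}: one cannot cut $X$ into an open piece on which $\norm{\mathfrak{f}}$ is large, so the patching must be arranged so that the leftover weight $\varphi_0$ of the partition of unity is supported where $\mathfrak{f}$ is already small, which is exactly what lets the zero section handle the region near infinity. The reduction in the first paragraph is the other essential point, since $\mathfrak{X}$ is only assumed uniformly dense and need not itself contain the compactly supported bump functions used to build the partition of unity.
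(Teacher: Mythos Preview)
Your proof is correct and follows essentially the same partition-of-unity approach as the paper. The only difference is order and packaging: the paper first invokes \cite[Prop.~C.24]{Wil2007} to approximate $\mathfrak{t}$ by a $C_0(X)$-combination $\sum\rho_i\mathfrak{s}_i$ of elements of $\Gamma$ and then replaces each $\rho_i$ by some $\psi_i\in\mathfrak{X}$, whereas you first absorb the approximation by $\mathfrak{X}$ (passing to the $C_0(X)$-module $\overline{\Gamma}$) and then spell out the partition-of-unity step explicitly, including the zero-section trick to handle the region near infinity.
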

The authors would like to thank Dana Williams for pointing them to \cite[Prop.\ C.24]{Wil2007}.
\begin{proof}
    Let $\mf{t}\in\Gamma_0 (X;\cC)$ and $\epsilon >0$ be arbitrary. 
    Because of Assumption~\ref{assu.ptwdense} and upper semi-continuity of $\cC$, we can use the proof of \cite[Prop.\ C.24]{Wil2007} to find $\mf{s}_{1},\ldots, \mf{s}_n\in\Gamma$ and $\rho_1,\ldots, \rho_n \in C_0 (X)$ such that
    \begin{equation*}
        \norm{\mf{t}-\sum_{i=1}^{n} \rho_{i}\mf{s}_i}_{\infty} < 
        \frac{\epsilon}{2}.
    \end{equation*}

    By Assumption~\ref{assu.Xdense}, we can find $\psi_1,\ldots,\psi_n\in \mf{X}$ such that
    \[
    \norm{\rho_i - \psi_i}_{\infty} <
        \frac{\epsilon}{2n\norm{\mf{s}_i}_{\infty} +1}
    \]
    for each $1\leq i\leq n$. This yields
    \begin{align*}
        \norm{ \mf{t} - \sum_{i=1}^{n} \psi_{i}\mf{s}_i}_{\infty}
        &\leq
        \norm{ \mf{t} - \sum_{i=1}^{n} \rho_{i}\mf{s}_i}_{\infty} +
        \sum_{i=1}^{n}  \norm{\rho_i - \psi_{i}}_{\infty} \norm{\mf{s}_i}_{\infty}
        \\
        &< 
        \frac{\epsilon}{2}+ \sum_{i=1}^{n}  
        \frac{\epsilon}{2n\norm{\mf{s}_i}_{\infty} +1} \norm{\mf{s}_i}_{\infty}
      <
        \epsilon.
    \end{align*}
    By Assumption~\ref{assu.module} and since $\Gamma$ is a subspace, we have $\sum_{i}\psi_{i}\mf{s}_i\in \Gamma$, so that we have approximated the arbitrary element $\mf{t}$ by an element of $\Gamma$.
\end{proof}

\begin{lemma}\label{lem:ioj-range-uniformly-dense}
    In the setting of Theorem~\ref{thm.blend}, the range of $i\odot j$ is contained in $\Gamma_c (\cG\bowtie\cH;\cB\bowtie_{\beta}\cH)$ and it is uniformly dense in $\Gamma_0 (\cG\bowtie\cH;\cB\bowtie_{\beta}\cH)$.
\end{lemma}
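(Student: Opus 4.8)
The plan is to apply Lemma~\ref{lem:dense-sections} to the subspace
\[
    \Gamma := (i\odot j)\bigl(\Gamma_c(\cG;\cB)\otimes_{\mathbb{C}}C_c(\cH)\bigr),
\]
which is contained in the range of $i\odot j$. The crucial preliminary step is an explicit pointwise formula for the generators of $\Gamma$: I claim that for $\sigma\in\Gamma_c(\cG;\cB)$, $f\in C_c(\cH)$, and $(x,h)\in\cG\bowtie\cH$,
\begin{equation*}
    \bigl(i(\sigma)\square j(f)\bigr)(x,h) = \bigl(f(h)\,\sigma(x),\, h\bigr).
\end{equation*}
Since the right-hand side is a compactly supported continuous section of $\cB\bowtie_\beta\cH$ (with support inside $\supp(\sigma)\times\supp(f)$), this simultaneously shows $\Gamma\subseteq\Gamma_c(\cG\bowtie\cH;\cB\bowtie_\beta\cH)$, giving the containment assertion of the lemma.

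To establish the formula, I would expand the convolution $i(\sigma)\square j(f)$ via \eqref{eq:star-alg-structure-of-sections} as a sum over $(y,k)$ in the range fibre of $(x,h)$ at the unit $(r_\cG(x),r_\cG(x))$ of the terms $i(\sigma)(y,k)\bullet j(f)\bigl((y,k)\inv(x,h)\bigr)$. By the definition of $i$ in Lemma~\ref{lem:blendG}, the factor $i(\sigma)(y,k)$ has nonzero Banach part only when $k=s_\cG(y)$ is idempotent, so the sum collapses to $y\in\cG^{r_\cG(x)}$ with $i(\sigma)(y,s_\cG(y))=(\sigma(y),s_\cG(y))$. A short computation using \ref{cond.ZS8} and \ref{item:restriction-of-unit-is-unit} gives $(y,s_\cG(y))\inv(x,h)=(y\inv x,h)$, and since $j(f)$ is supported on pairs whose $\cG$-coordinate is a unit (Lemma~\ref{lem:Psi}), the only surviving term is $y=x$. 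Evaluating the remaining product $(\sigma(x),s_\cG(x))\bullet(f(h)1_{s_\cG(x)},h)$ through \ref{cond.C2}, using that $\beta_{s_\cG(x)}$ is the identity (\ref{cond.A3}), that $1_{s_\cG(x)}$ acts as a right identity on $\cB_x$, and that $s_\cG(x)|_{s_\cG(x)}=s_\cG(x)$ by \ref{cond.ZS9}, yields exactly $(f(h)\sigma(x),h)$.

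With the formula in hand, I would verify the three hypotheses of Lemma~\ref{lem:dense-sections} for $X=\cG\bowtie\cH$, $\cC=\cB\bowtie_\beta\cH$, this $\Gamma$, and the subspace $\mathfrak{X}\subset C_c(\cG\bowtie\cH)$ spanned by the functions $(x,h)\mapsto g(x)f(h)$ with $g\in C_c(\cG)$, $f\in C_c(\cH)$. For \ref{assu.Xdense}, $\mathfrak{X}$ is a conjugation-closed subalgebra of $C_0(\cG\bowtie\cH)$ that separates points and vanishes nowhere, so Stone--Weierstrass gives uniform density. For \ref{assu.module}, the pointwise product of $(x,h)\mapsto g(x)f'(h)$ with a generator $(\sigma(x)\tilde f(h),h)$ equals $\bigl((g\sigma)(x)\,(f'\tilde f)(h),h\bigr)=i(g\sigma)\square j(f'\tilde f)$, which lies in $\Gamma$ since $g\sigma\in\Gamma_c(\cG;\cB)$ and $f'\tilde f\in C_c(\cH)$. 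For \ref{assu.ptwdense}, choosing $f$ with $f(h)=1$ shows $\Gamma(x,h)\supseteq\{(\sigma(x),h):\sigma\in\Gamma_c(\cG;\cB)\}$, which is dense in $\cC_{(x,h)}\cong\cB_x$ because continuous compactly supported sections of a Banach bundle are pointwise dense in each fibre. Lemma~\ref{lem:dense-sections} then yields that $\Gamma$, and hence the range of $i\odot j$, is uniformly dense in $\Gamma_0(\cG\bowtie\cH;\cB\bowtie_\beta\cH)$.

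I expect the convolution identity to be the main obstacle: one must argue that all but a single summand vanishes and then push the surviving product through the \ZS multiplication rule, which is precisely where the compatibility conditions \ref{cond.ZS8}, \ref{cond.ZS9}, \ref{item:restriction-of-unit-is-unit}, and \ref{cond.A3}, together with the unitality of $\cB_u$, are used. Once the formula is in place, the density argument is routine given Lemma~\ref{lem:dense-sections}.
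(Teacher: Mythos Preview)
Your proposal is correct and follows essentially the same route as the paper: both derive the explicit formula $i\odot j(\sigma\otimes f)(x,h)=(f(h)\sigma(x),h)$ by collapsing the convolution sum via the support constraints on $i(\sigma)$ and $j(f)$, and then invoke Lemma~\ref{lem:dense-sections} with $\mathfrak{X}$ the span of elementary tensors $g\bowtie f$ and verify the three hypotheses exactly as you outline. The only cosmetic differences are that the paper uses $C_0$ rather than $C_c$ in defining $\mathfrak{X}$ and explicitly cites Fell's theorem that the bundle has enough cross-sections for the fibrewise density step, but these do not alter the argument.
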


\begin{proof}
    Let $\cC=(C,q)$ be the Fell bundle $\cB\bowtie_{\beta}\cH$, and $\cK:= \cG\bowtie\cH$. Let us first show that the range of $i\odot j$ is contained in $\Gamma_c (\cK;\cC)$, so take  $\sigma\in \Gamma_c (\cG;\cB)$ and $f\in C_c (\cH)$. If $\varepsilon\in\cK$, then by definition of the product in $\Gamma_c (\cK;\cC)$ (see Equation~\eqref{eq:star-alg-structure-of-sections}), we have
\[
    i\odot j (\sigma\otimes_{\mathbb{C}} f) (\varepsilon)
    =
    \sum_{\varphi: r_{\cK}(\varphi)=r_{\cK}(\varepsilon)} i(\sigma)(\varphi) \bullet j (f)(\varphi\inv \varepsilon),
\]
where $\bullet$ refers to multiplication in $\cC$. Only if $\varphi$ is of the form $(y,s_{\cG}(y))$ for some $y\in\cG$ does $i(\sigma)(\varphi)$ not necessarily vanish, in which case $\varphi\inv = (y\inv, r_{\cG}(y))$ and  $\varphi\inv \varepsilon = (y\inv x, h)$ where $\varepsilon=(x,h).$ Similarly, $j (f)(y\inv x, h)$ vanishes if $y\inv x \notin \cG\z,$ i.e.\ $y=x$. We conclude 
\[
    i\odot j (\sigma\otimes_{\mathbb{C}} f) (x,h)
    =
    i(\sigma)(x,s_{\cG}(x)) \bullet j (f)(r_{\cH}(h), h).
\]
Let $v=s_{\cG}(x)=r_{\cH}(h)$. Using the definition of $i$ resp.\ $j,$ we see that
\[
   i(\sigma)(x,v) =(\sigma(x),v)
   \text{ and }
   j (f)(v, h)    = (f(h)1_v, h).
\]
By definition of $\bullet$ (see \ref{cond.C2}) and since $\beta_v=\mathrm{id}$ since $v\in\cH\z$, we have
\begin{equation}\label{eq:iodotj-range}
    i\odot j (\sigma\otimes_{\mathbb{C}} f) (x,h)
    =
    (\sigma(x),v) \bullet (f(h)1_v, h)
    =
    ( \sigma(x)f(h), h)
\end{equation}
where we used that $f(h)$ is just a scalar. Thus, $i\odot j (\sigma\otimes_{\mathbb{C}} f)$ is clearly continuous and, by construction, a section. The above shows furthermore that 
\[
    \supp \bigl( i\odot j (\sigma\otimes_{\mathbb{C}} f)\bigr)
    = \bigl( \supp(\sigma)\times\supp(f)\bigr) \cap \cK,
\]
which is compact since $\mc{K}$ is closed in $\cG\times\cH$. This implies that the range of $i\odot j$ is a subspace of $\Gamma_{c}(\cK;\cC)$. 

To see that the range is dense, we will employ Lemma~\ref{lem:dense-sections}. Let $\Gamma$ be the range of $i\odot j$ and let $\mf{X}$ be the linear span of those functions $F$ in $C_0 (\cK)$ such that $F(x,h)=f_{1}(x)f_{2}(h)$ for some $f_{1}\in C_0 (\cG)$ and $f_{2}\in C_0 (\cH)$; we will write $f_{1}\bowtie f_{2}:= F$ for the duration of this proof. Clearly, $\mf{X}$ is a $*$-subalgebra of $C_0 (\cK)$ that separates points, and for each $(x,h)\in\cK$, we may find $f_{1}, f_{2}$ with $f_{1}(x)\neq 0 \neq f_{2} (h)$, i.e.\ $f_{1}\bowtie f_{2}(x,h)\neq 0$. Thus, by the Stone--Weierstrass Theorem, $\mf{X}$ is dense in $C_{0} (\cK)$, i.e.\ Assumption~\ref{assu.Xdense} of Lemma~\ref{lem:dense-sections} holds.

Furthermore, we can rewrite the pointwise product of any $f_{1}\bowtie f_{2}$ with elements in the range of $i\odot j$ as follows:
\[
    f_{1}\bowtie f_{2} [i\odot j (\sigma\otimes_{\mathbb{C}} f)] =
    i\odot j ((f_{1}\sigma)\otimes_{\mathbb{C}} (f_{2}f)),
\]
since $\Gamma_{c}(\cG;\cB)$ is a $C_{0}(\cG)$-module and $C_{c}(\cH)$ is a $C_{0}(\cH)$-module. This shows that the element $f_{1}\bowtie f_{2} [i\odot j (\sigma\otimes_{\mathbb{C}} f)] $ is in the range of $i\odot j$, i.e.\ $\Gamma$ satisfies Assumption~\ref{assu.module} of Lemma~\ref{lem:dense-sections}.

Next fix $(x,h)\in\cK$, any element $b$ in $\cB_{x}\cong \cC_{(x,h)}$, and $\epsilon>0$. Since $\cB$ has enough continuous cross-sections (see \cite[Theorem~12]{FellBundleBook}), i.e.\ $\Gamma_{c}(\cG;\cB)(x)=\cB_{x}$, we can find $\sigma\in \Gamma_{c}(\cG;\cB)$ with $\norm{b - \sigma(x)}< \epsilon$. If $f\in C_c (\cH)$ with $f(h)=1$, then by Equation~\eqref{eq:iodotj-range},
\[
    \norm{(b,h) - i\odot j (\sigma\otimes_{\mathbb{C}} f) (x,h)}
    =
    \norm{ (b- \sigma(x), h)} < \epsilon,
\]
which proves that $\Gamma(x,h)$ is dense in $\cC_{(x,h)}$, i.e.\ Assumption~\ref{assu.ptwdense} of Lemma~\ref{lem:dense-sections} is also satisfied. It follows that the range $\Gamma$ of $i\odot j$ is uniformly dense in $\Gamma_0 (\cK;\cC)$.
\end{proof}

\begin{proof}[Proof of Theorem~\ref{thm.blend}]
We only have to show that $i\odot j$ has dense range, so fix an arbitrary $\mf{t} \in \Gamma_{c}(\cK;\cC)$, where we again write $\cC$ for the bundle $\cB\bowtie_{\beta}\cH$ and $\cK$ for $\cG\bowtie\cH$. By a standard `partition of unity' argument, using the compact support of $\mf{t}$, we can 
without loss of generality assume that $\supp'(\mf{t})$ is contained in a basic open set $W$. 

Note that the topology of $\cK$ has a basis of open sets consisting of $U\bowtie V := (U\times V)\cap \cK$, where $U\subset \cG$, $V\subset \cH$ are basic open sets. In particular, we can assume that $W=U\bowtie V$, where $U$ and $V$ are small bisections, i.e.\ bisections which are precompact and whose closure is contained in another open bisection, say $\overline{U}\subset U'$ and $\overline{V}\subset V'$ (See \cite[Lemma 5.1.]{JC:structure}).

Fix $\epsilon>0$. By Lemma~\ref{lem:ioj-range-uniformly-dense}, we know that there exists $\mf{s}$ in the range of $i\odot j$ such that $\norm{\mf{t}-\mf{s}}_{\infty}<\epsilon.$ Let $f_{1}\in C_c (\cG)$ be a $[0,1]$-valued function such that $f_{1}|_{U}\equiv 1$ and $f_{1}$ vanishes off of $U'$. Similarly, let $f_{2}\in C_c (\cH)$ be such that $f_{2}|_{V}\equiv 1, $ and $f_{2}$ vanishes off of $V'$. We have seen in the proof of Lemma~\ref{lem:ioj-range-uniformly-dense} that $\mf{s}'\colon (x,h)\mapsto f_{1}(x)f_{2}(h)\mf{s}(x,h)$ is also in the range of $i\odot j$. Since $\mf{t}$ vanishes off of $U\bowtie V$ and since $f_{1}, f_{2}$ are $[0,1]$-valued, we see that
\begin{align*}
  \norm{\mf{t}-\mf{s}'}_{\infty}
  \leq
  \max\left( \sup_{\varepsilon\in U\bowtie V}\norm{(\mf{t} -\mf{s}')(\varepsilon)}, \sup_{\varepsilon\in 
  \cK\setminus (U\bowtie V)}\norm{\mf{s}(\varepsilon)}\right).
\end{align*}
Since $f_{1}|_{U}\equiv 1$ and $f_{2}|_{V}\equiv 1, $ the right-hand side is exactly $\norm{\mf{t}-\mf{s}}_{\infty}$, which is smaller than $\epsilon$ by choice of $\mf{s}$.
Note that $\mf{t}-\mf{s}'$ is supported in the open bisection $ U'\bowtie V'$. We may thus use the computation in \cite[Lemma 4.4]{BPRRW2017} to conclude that $\norm{\mf{t}-\mf{s}'}_{I}=\norm{\mf{t}-\mf{s}'}_{\infty} < \epsilon$. Since the $I$-norm dominates the full $C^*$-norm, this proves that~$\mf{s}'$, an element in the range of~$i\odot j$, approximates~$\mf{t}$ in $C^*(\cC)$.
\end{proof}

\renewcommand{\cG}{\Gamma}
\renewcommand{\cH}{\Lambda}
\renewcommand{\cK}{\Omega}

\section{Embedding of $C^*(\cB)$ into $C^*(\cB\bowtie_\beta\cH)$}

It is a well known fact that for a $C^*$-dynamical system $(A,\cH,\alpha)$ where $\cH$ is a discrete group, there exists an injective $*$-homomorphism $i\colon A\to A\rtimes_\alpha \cH$ that embeds $A$ inside the crossed product $C^*$-algebra $A\rtimes_\alpha \cH$, since $A$ embeds injectively inside the dense $*$-subalgebra $\Gamma_c(\cH,A)$ via $a\mapsto a u_e$.
We note that the crossed product 
can be viewed as the $C^*$-algebra of a \ZS product of Fell bundle, as illustrated in the following example. 

\begin{example} Let $A$ be a $C^*$-algebra, $\cH$ a discrete 
group, and $(A,\cH,\alpha)$ a $C^*$-dynamical system in the classical sense. If we think of~$A$ as a Fell bundle ${\mathscr{A}}$ over the trivial group $\{e\}$, then~$A=C^*({\mathscr{A}})$. Since the $\cH$-action $\alpha$ satisfies all conditions in Definition~\ref{df.beta}, we can form the \ZS product $\mathscr{A}\bowtie_\alpha \cH$, a Fell bundle over $\{e\}\bowtie\cH \cong \cH$. One can verify that its $C^*$-algebra is canonically isomorphic to the crossed product $C^*$-algebra $A\rtimes_\alpha \cH$. In particular, when $A$ is unital, this implies that the inclusion map $i\colon A=C^*(\mathscr{A})\to C^*(\mathscr{A}\bowtie_\alpha \cH)$ defined in Proposition~\ref{prop.blendG} is injective. 
\end{example} 

The above example motivates the question whether the inclusion map $i\colon C^*(\cB)\to C^*(\cB\bowtie_\beta \cH)$ from Proposition \ref{prop.blendG} is always injective when $\cH$ is discrete. Throughout this section, we assume that $\cG$ and $\cH$ are discrete groups, so that $\cU= \cG\z=\cH\z$ is merely $\{e\}$. We further assume that the $C^*$-algebra $\cB_e$ is unital, with its unit denoted by $1$.

In this setting, a covariant representation $(\mu, \cU\ast\mscr{H},\hat{\pi}, \hat{M})$ of $(\cB, \beta)$ collapses to a $*$-representation $\pi\colon\cB\to \mathbb{B}(H)$ and a unitary representation $M\colon \cH \to \mathbb{U}(H)$ on some Hilbert space $H$ which are covariant in the sense of Equation~\eqref{eq:def:covariance}.

\begin{proposition}\label{prop:amplification}
Assume $(\cG,\cH)$ is a matched pair of discrete groups and let $\cB=(B,p)$ be a Fell bundle over $\cG$ with a $(\cG,\cH)$-compatible $\cH$-action $\beta$. 
If $\pi$ is a strict representation of $\cB$ on $H$, let $\widetilde{H} := \ell^2 (\cG\bowtie\cH)\otimes H$ and for $b\in\cB$, $(x,h)\in\cG\bowtie\cH$, and $\xi\in H$, define
\[ 
  \Pi_0(b) \bigl(\delta_{(x,h)}\otimes \xi\bigr)= 
  \delta_{(p(b)x,h)}
  \otimes 
  \pi\bigl( \beta(h\inv|_{(p(b)x)\inv},b) \bigr)
    (
    \xi
    ).
\]
Then \label{item:Pi0-extends} $\Pi_0(b)$ extends to a bounded linear map $\Pi(b)\in \mathbb{B}(\widetilde{H}),$ and $\Pi\colon\cB\to \mathbb{B}(\widetilde{H})$ is a $*$-representation.
\end{proposition}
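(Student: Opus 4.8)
The plan is to view each $\Pi_0(b)$ as a block-monomial (``generalized permutation'') operator for the orthogonal decomposition $\widetilde H = \bigoplus_{(x,h)\in\cG\bowtie\cH}\delta_{(x,h)}\otimes H$, deduce its boundedness from a single block-norm estimate, and then verify the algebraic axioms fibrewise by reducing each to an identity among $\cG$-restriction maps. Fix $b\in\cB$, write $g:=p(b)$, and for $(x,h)\in\cG\bowtie\cH$ set $k_{(x,h)}:=h\inv|_{(gx)\inv}$, so that $\Pi_0(b)\bigl(\delta_{(x,h)}\otimes\xi\bigr)=\delta_{(gx,h)}\otimes\pi\bigl(\beta_{k_{(x,h)}}(b)\bigr)\xi$. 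Since $\cG,\cH$ are groups, all sources and ranges equal $e$; in particular $s_\cH(k_{(x,h)})=s_\cG((gx)\inv)=e=r_\cB(b)$ by \condref{ZS6}, so $\beta_{k_{(x,h)}}(b)$ is defined, and $(x,h)\mapsto(gx,h)$ is a bijection of $\cG\bowtie\cH$. Because distinct source indices are carried to distinct, hence orthogonal, target summands, for any finite sum $v=\sum_{(x,h)}\delta_{(x,h)}\otimes\xi_{(x,h)}$ we get $\norm{\Pi_0(b)v}^2=\sum_{(x,h)}\norm{\pi(\beta_{k_{(x,h)}}(b))\xi_{(x,h)}}^2$. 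As $\pi$ is norm-decreasing \cite[Remark 4.6]{MW2008} and $\beta_{k_{(x,h)}}$ is isometric (Corollary~\ref{cor.iso}), every block satisfies $\norm{\pi(\beta_{k_{(x,h)}}(b))}\leq\norm{\beta_{k_{(x,h)}}(b)}=\norm{b}$; hence $\norm{\Pi_0(b)v}\leq\norm{b}\,\norm{v}$, and $\Pi_0(b)$ extends to $\Pi(b)\in\mathbb{B}(\widetilde H)$ with $\norm{\Pi(b)}\leq\norm{b}$.

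Fibrewise linearity of $\Pi$ is immediate, since $k_{(x,h)}$ depends only on $g=p(b)$ (constant on a fibre) and both $\beta_{k_{(x,h)}}$ and $\pi$ are linear. For multiplicativity, take $(b,c)\in\cB^{(2)}$ and set $g:=p(b)$, $g':=p(c)$. Applying the definition twice, both $\Pi(b)\Pi(c)$ and $\Pi(bc)$ carry $\delta_{(x,h)}\otimes\xi$ into the summand indexed by $(gg'x,h)$, acting there by $\pi(\beta_{k_b}(b))\,\pi(\beta_{k_c}(c))$ and by $\pi(\beta_{k_b}(bc))$ respectively, where $k_b:=h\inv|_{(gg'x)\inv}$ and $k_c:=h\inv|_{(g'x)\inv}$. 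By \condref{A4} we have $\beta_{k_b}(bc)=\beta_{k_b}(b)\,\beta_{k_b|_g}(c)$, so it suffices to prove $k_b|_g=k_c$. Here the restriction bookkeeping enters: \condref{ZS3} gives $k_b=(h\inv|_{(g'x)\inv})|_{g\inv}=k_c|_{g\inv}$, whence $k_b|_g=k_c|_{g\inv g}=k_c|_{s_\cH(k_c)}=k_c$, again by \condref{ZS3} together with \condref{ZS9}. Multiplicativity of $\pi$ then yields $\Pi(bc)=\Pi(b)\Pi(c)$.

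For $*$-preservation, I would first compute the adjoint of the block-monomial operator $\Pi(b)$: it sends $\delta_{(y,h)}\otimes\eta$ to $\delta_{(g\inv y,h)}\otimes\pi\bigl(\beta_{h\inv|_{y\inv}}(b)\bigr)^*\eta$ (the source index mapping to $(y,h)$ is $(g\inv y,h)$, with associated restriction $h\inv|_{(g\cdot g\inv y)\inv}=h\inv|_{y\inv}$). Since $\pi$ is $*$-preserving and $b\in\cB_g$ with $r_\cG(g)=e=s_\cH(h\inv|_{y\inv})$, Condition~\condref{A5} turns this block into $\pi\bigl(\beta_{(h\inv|_{y\inv})|_g}(b^*)\bigr)$. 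On the other hand $\Pi(b^*)$, for which $p(b^*)=g\inv$, sends $\delta_{(y,h)}\otimes\eta$ to $\delta_{(g\inv y,h)}\otimes\pi\bigl(\beta_{h\inv|_{y\inv g}}(b^*)\bigr)\eta$, and $(h\inv|_{y\inv})|_g=h\inv|_{y\inv g}$ by \condref{ZS3}; hence $\Pi(b)^*=\Pi(b^*)$. The conceptual content is thus elementary: the only genuine (and most error-prone) work is the two restriction-map identities $k_b|_g=k_c$ and $(h\inv|_{y\inv})|_g=h\inv|_{y\inv g}$, both of which collapse to the associativity axiom \condref{ZS3} (with \condref{ZS9} supplying the unit); everything else is the standard block-operator norm estimate and the fibrewise linearity, multiplicativity, and $*$-compatibility of $\pi$ and $\beta$.
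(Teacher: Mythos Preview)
Your proof is correct and follows essentially the same approach as the paper's: both use the norm-decreasing property of $\pi$ together with the isometry of $\beta$ (Corollary~\ref{cor.iso}) for boundedness, and then verify linearity, multiplicativity, and $*$-preservation fibrewise via \condref{A4}, \condref{A5}, and the restriction identities from \condref{ZS3} and \condref{ZS9}. Your block-monomial framing makes the orthogonality argument and the key restriction identity $k_b|_g=k_c$ slightly more transparent than the paper's direct $c_{00}$ computation, but the substance is the same.
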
 

\begin{proof}  
    To see that $\Pi_0 (b)$ extends, we compute for $f\in c_{00}(\cG\bowtie\cH, H)\subset \widetilde{H}$:
    \begin{align*}
        \norm{\Pi_0(b) (f)}_2^2
        &=
        \sum_{(x,h)\in \cG\bowtie\cH} \abs{\Pi_0(b) (f) (x,h)}
        \\
        &=
        \sum_{(x,h)\in \cG\bowtie\cH} \abs{\pi\bigl( \beta(h\inv|_{x\inv},b) \bigr)\bigl[ f(p(b)\inv x, h) \bigr]}^2
        \\
        &\leq
        \sum_{(x,h)\in \cG\bowtie\cH}
        \norm{\pi\bigl( \beta(h\inv|_{x\inv},b) \bigr)}^2
        \,
        \abs{f(p(b)\inv x, h)}^2.
    \end{align*}
    Since $\pi$ is a representation of $\cB$ on $H$, it is norm decreasing. Furthermore, each $\beta_k$ is isometric by Corollary~\ref{cor.iso}, so we have
    \[
        \norm{\pi\bigl( \beta(h\inv|_{x\inv},b) \bigr)}
        \leq
        \norm{\beta(h\inv|_{x\inv},b)}
        =
        \norm{b}.
    \]
Thus,
    \begin{align*}
        \norm{\Pi_0(b) (f)}_2^2
        &\leq
        \norm{b}^2
        \,
        \sum_{(x,h)\in \cG\bowtie\cH}
            \abs{f(p(b)\inv x, h)}^2
        \\
        &\overset{(*)}{=}
        \norm{b}^2
    \,
        \sum_{(y,h)\in \cG\bowtie\cH}
            \abs{f(y, h)}^2
        =
        \norm{b}^2
        \,
        \norm{f}_2^2
        ,
    \end{align*}
    where $(*)$ follows from the left-invariance of counting measure on the discrete group $\cG\bowtie\cH$. This shows that $\Pi_0(b)$ extends to a map $\Pi(b)\in\mathbb{B}(\widetilde{H})$.
    
    Let us now prove that $\Pi$ is a $*$-representation. To see that $\Pi$ is fibrewise linear, let $b, c \in \cB$ be in the same fibre, so that $y:=p(b)=p(c)=p(\lambda b+c)$ for any $\lambda\in\mathbb{C}$. Then for $f\in c_{00},$
    \[
        \Pi_0 (\lambda b + c)  (f) (x,h)
        =
        \pi\bigl( \beta(h\inv|_{x\inv},\lambda b+c) \bigr)\bigl[ f(y\inv x, h) \bigr].
    \]
    Since $\beta$ was assumed to be fibrewise linear, and since $\pi$ is a $*$-homomorphism, we have
    \[
        \pi\bigl( \beta(h\inv|_{x\inv},\lambda b+c) \bigr)
        =
        \lambda \pi\bigl( \beta(h\inv|_{x\inv},b) \bigr)
        +
        \pi\bigl( \beta(h\inv|_{x\inv},c) \bigr),
    \]
    which proves $\Pi (\lambda b + c)=\lambda\Pi (b)+\Pi (c)$. Similarly, to see that $\Pi$ is fibrewise multiplicative, let $(b,c)\in\cB^{(2)}$. Then $p(b\cdot c)=p(b)p(c)$  by \ref{cond.F1} and $\beta(h\inv|_{x\inv},b\cdot c)=\beta(h\inv|_{x\inv},b)\beta(h\inv|_{x\inv p(b)},c)$ by \ref{cond.A4}. Thus  
    \begin{align*}
        \pi\bigl( \beta(h\inv|_{x\inv},b\cdot c) \bigr)
        &=
        \pi\bigl(\beta(h\inv|_{x\inv},b)\beta(h\inv|_{x\inv p(b)},c)\bigr)
        \\&=
        \pi\bigl(\beta(h\inv|_{x\inv},b)\bigr)\pi\bigl(\beta(h\inv|_{x\inv p(b)},c)\bigr),
    \end{align*}
    so that
    \begin{align*}
        &\Pi_0 (b\cdot c)  (f) (x,h)
        =
        \pi\bigl( \beta(h\inv|_{x\inv},b\cdot c) \bigr)\bigl[ f(p(b\cdot c)\inv x, h) \bigr]\\
        &=
        \pi\bigl(\beta(h\inv|_{x\inv},b)\bigr)\pi\bigl(\beta(h\inv|_{x\inv p(b)},c)\bigr)\bigl[ f(p(c)\inv p(b)\inv x, h) \bigr]
        \\
        &=
        \pi\bigl(\beta(h\inv|_{x\inv},b)\bigr)\bigl[\Pi_0 (c)  (f) (p(b)\inv x,h) \bigr]
        =
        \Pi_0 (b)  (\Pi_0 (c)  (f)) (x,h)
        ,
    \end{align*}
    proving that $\Pi(b\cdot c) = \Pi (b) \Pi (c).$ Lastly, to see that $\Pi$ is fibrewise $*$-preserving, note that $p(b^*)=p(b)\inv$  by \ref{cond.F5} and also $\beta(h\inv|_{y\inv},b^*)=\beta(h\inv|_{y\inv p(b)\inv},b)^*$ by \ref{cond.A5}. Thus
    \begin{align*}
         \pi\bigl( \beta(h\inv|_{x\inv},b^*) \bigr)=
          \pi\bigl( \beta(h\inv|_{(p(b)x)\inv},b)\bigr)^* ,
    \end{align*}
    so with  $g\in c_{00}(\cG\bowtie\cH, H)$, and $(y,h)\in \cG\bowtie\cH$, we have
    \begin{align*}
        \Pi_0 (b^*)  (g) (y,h)
        &=
        \pi\bigl( \beta(h\inv|_{y\inv},b^*) \bigr)\bigl[ g(p(b^*)\inv y, h) \bigr]
        \\
        &=
        \pi\bigl( \beta(h\inv|_{(p(b)y)\inv},b) \bigr)^*\bigl[ g(p(b) y, h) \bigr].
    \end{align*}
    Therefore, if $f\in c_{00}(\cG\bowtie\cH, H)$, then
    \begin{align*}
        &\Biginner{\Pi_0 (b^*) (g)}{f}^{\widetilde{H}}
        =
        \sum_{(y,h)\in (\cG\bowtie\cH)^v}
        \Biginner{\Pi_0 (b^*) (g) (y,h)}{f (y,h)}^{H}
        \\
        &=
        \sum_{(y,h)\in \cG\bowtie\cH}
        \Biginner{\pi\bigl( \beta(h\inv|_{(p(b)y)\inv},b) \bigr)^*\bigl[ g(p(b) y, h) \bigr]}{f (y,h)}^{H}
        \\
        &=
        \sum_{(y,h)\in \cG\bowtie\cH}
        \Biginner{g(p(b) y, h)}{\pi\bigl( \beta(h\inv|_{(p(b)y)\inv},b) \bigr) [f(y,h)]}^{H}
        \\
        &\overset{(*)}{=}
        \sum_{(x,h)\in \cG\bowtie\cH}
        \Biginner{g(x, h)}{\pi\bigl( \beta(h\inv|_{x\inv},b) \bigr) [f(p(b)\inv x,h)]}^{H}
        \\
        &=
        \Biginner{g}{\Pi_0 (b) f}^{\widetilde{H}}
        =
        \Biginner{\Pi_0 (b)^* (g)}{ f}^{\widetilde{H}},
    \end{align*} where $(*)$ follows, again, from the left-invariance of counting measure. This proves that $\Pi(b^*)=\Pi(b)^*,$ and all in all that $\Pi$ is a $*$-homomorphism.
\end{proof} 

\begin{definition}
    In the setting of Proposition~\ref{prop:amplification}, the properties we have proved yield that $\Pi$ is another strict representation of $\cB$. We will call it the {\em twisted amplification of $\pi$.}
\end{definition}

For a fixed $h\in \cH$, if $\xi\in H$
and $(x,k)\in \cG\bowtie\cH$, we define
\[
    M_{h} (\delta_{(x,k)}\otimes \xi)
    =\delta_{(e,h)(x,k)}
    \otimes \xi.
\]

\begin{lemma}\label{lm.unitaryM}
   The map $M_{h}$ 
   extends to a unitary map on $\widetilde{H}=\ell^2 (\cG\bowtie\cH)\otimes H $. Moreover, $M\colon\cH\to \mathbb{U}(\widetilde{H})$ is a unitary representation of $\cH$. 
\end{lemma}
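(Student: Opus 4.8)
The plan is to recognize $M_{h}$ as (an amplification of) the left regular representation of the discrete group $\cG\bowtie\cH$ evaluated at the element $(e,h)$. Concretely, if $L_{(e,h)}$ denotes left translation by $(e,h)$ on $\ell^2(\cG\bowtie\cH)$, then $M_{h}=L_{(e,h)}\otimes \mathrm{id}_H$, so unitarity of $M_{h}$ will follow once we know that $(x,k)\mapsto (e,h)(x,k)$ is a bijection of $\cG\bowtie\cH$ onto itself.

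First I would check that $(e,h)$ is a genuine element of $\cG\bowtie\cH$: since $\cG$ and $\cH$ are groups with common unit space $\{e\}$, we have $s_\cG(e)=e=r_\cH(h)$, so $(e,h)\in\cG\bowtie\cH$. Because $\cG\bowtie\cH$ is a group, left multiplication by the fixed element $(e,h)$ is a bijection of the underlying set, with inverse given by left multiplication by $(e,h)^{-1}$. Fixing an orthonormal basis $\{\xi_j\}$ of $H$, the family $\{\delta_{(x,k)}\otimes\xi_j\}$ is an orthonormal basis of $\widetilde{H}$, and $M_{h}$ maps it bijectively onto itself; a bijection of an orthonormal basis extends uniquely to a unitary operator, which establishes the first claim.

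For the representation property, the main computation is to verify that $(e,h_1)(e,h_2)=(e,h_1h_2)$ inside $\cG\bowtie\cH$. Using the multiplication formula $(x,h)(y,g)=(x(h\cdot y),h|_{y} g)$ with $x=y=e$, I would compute $h_1\cdot e = h_1\cdot s_\cH(h_1)=r_\cH(h_1)=e$ by \ref{item:acting-on-unit-is-unit}, and $h_1|_{e}=h_1|_{s_\cH(h_1)}=h_1$ by \ref{ZSG-restriction-unit}, so indeed $(e,h_1)(e,h_2)=(e,h_1h_2)$; in other words, $h\mapsto (e,h)$ is a group homomorphism $\cH\to\cG\bowtie\cH$. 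Combined with associativity of the multiplication in $\cG\bowtie\cH$, this gives on basis vectors
\[
    M_{h_1}M_{h_2}(\delta_{(x,k)}\otimes\xi)
    =\delta_{(e,h_1)(e,h_2)(x,k)}\otimes\xi
    =\delta_{(e,h_1h_2)(x,k)}\otimes\xi
    =M_{h_1h_2}(\delta_{(x,k)}\otimes\xi),
\]
while $M_e=\mathrm{id}$ since $(e,e)$ is the unit of $\cG\bowtie\cH$. Hence $M$ is multiplicative and unital, so each $M_{h}$ is invertible with inverse $M_{h^{-1}}$; being an invertible isometry it is unitary with $M_{h}^{*}=M_{h^{-1}}$, and $M\colon\cH\to\mathbb{U}(\widetilde{H})$ is a unitary representation.

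I would stress that there is no serious obstacle here: the whole statement reduces to the observation that $M_{h}$ is built from left translation by a single group element, so the only point requiring genuine care is the bookkeeping identity $(e,h_1)(e,h_2)=(e,h_1h_2)$, which is dispatched by the Zappa--Sz\'ep relations \ref{item:acting-on-unit-is-unit} and \ref{ZSG-restriction-unit}.
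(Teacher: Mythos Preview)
Your proposal is correct and follows essentially the same approach as the paper: both recognize $M_h$ as left translation by $(e,h)$ on $\ell^2(\cG\bowtie\cH)\otimes H$ and verify the homomorphism property via the identity $(e,h_1)(e,h_2)=(e,h_1h_2)$. The only cosmetic difference is that the paper checks $\norm{M_h f}_2=\norm{f}_2$ directly on $c_{00}$ using left-invariance of counting measure, whereas you argue that $M_h$ permutes an orthonormal basis; both arguments are equivalent and equally valid.
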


\begin{proof}
    To see that $M_{h}$ extends, we compute for $f\in c_{00}(\cG\bowtie\cH, H)\subset \widetilde{H}$
    \begin{align*}
        \norm{M_{h} (f)}_{2}^2
        &=
        \sum_{(x,k)\in \cG\bowtie\cH}
        \norm{ f ( (e,h)^{-1} (x, k)}^2
        \\
        &\overset{(*)}{=}
        \sum_{(y,l)\in \cG\bowtie\cH}
        \norm{f ( y, l) }^2
        =
        \norm{f}_{2}^2,
    \end{align*}
    where $(*)$ holds because the counting measure on $\cG\bowtie\cH$ is left-invariant.   Therefore, $M_h$ extends to an isometric linear map on $\ell^2 (\cG\bowtie\cH)\otimes H $. For any $g,h\in\cH$ and $f$ as above, one can compute
    \begin{align*}
    M_g M_h (f)(x,k) &= M_h(f)((e,g)^{-1}(x,k)) \\
    &= f((e,h)^{-1}(e,g)^{-1}(x,k)) \\
    &= f((e,gh)^{-1}(x,k)) \\
    &= M_{gh}(f)(x,k).
    \end{align*}
Therefore, $M_g M_h=M_{gh}$ and $M\colon\cH\to \mathbb{U}(\ell^2 (\cG\bowtie\cH)\otimes H )$ is a homomorphism. We have that $M_{h\inv} M_h=M_h M_{h\inv}=M_e=I$, and thus each $M_h$ is unitary and $M$ is a unitary representation.
\end{proof}

\begin{lemma}\label{lm.M}
  Let $\pi$ and its twisted amplification $\Pi$ be as in Proposition~\ref{prop:amplification} and $M$ be as in Lemma \ref{lm.unitaryM}. Then,
  \[
        M_{h}\Pi(b) = \Pi (\beta(h,b))M_{h|_{p(b)}}
  \]
    for all $h\in\cH$ and $b\in \cB$. In particular, $(\Pi,M)$ is a covariant representation of $(\cB, \beta)$ in the sense of Definition~\ref{def:cov-rep}. 
\end{lemma}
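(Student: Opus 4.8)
The plan is to verify the intertwining relation $M_h\,\Pi(b)=\Pi(\beta(h,b))\,M_{h|_{p(b)}}$ directly on the elementary tensors $\delta_{(x,k)}\otimes\xi$ (with $(x,k)\in\cG\bowtie\cH$ and $\xi\in H$), which span a dense subspace of $\widetilde{H}$; since all operators in sight are bounded and linear by Proposition~\ref{prop:amplification} and Lemma~\ref{lm.unitaryM}, agreement on these suffices. Abbreviating $y:=p(b)$ and $w:=yx$, I would expand the left-hand side by applying $\Pi(b)$ and then $M_h$, using that $M_h$ sends $\delta_{(z,l)}\otimes\eta$ to $\delta_{(e,h)(z,l)}\otimes\eta=\delta_{(h\cdot z,\,h|_z l)}\otimes\eta$; and expand the right-hand side by applying $M_{h|_y}$ and then $\Pi(\beta(h,b))$, using $p(\beta(h,b))=h\cdot y$ from \condref{A1}. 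Both sides come out as a single elementary tensor, so the verification decouples into matching the $\ell^2(\cG\bowtie\cH)$-index and matching the operator acting on $\xi$.

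For the index, the left-hand side lands on $\delta_{(h\cdot w,\,h|_w k)}$, while the right-hand side lands on $\delta_{((h\cdot y)(h|_y\cdot x),\,(h|_y)|_x\,k)}$. These coincide by \condref{ZS2}, which gives $(h\cdot y)(h|_y\cdot x)=h\cdot(yx)=h\cdot w$, and \condref{ZS3}, which gives $(h|_y)|_x=h|_{yx}=h|_w$.

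For the operator on $\xi$, the left-hand side produces $\pi\bigl(\beta(k\inv|_{w\inv},b)\bigr)$, while the right-hand side produces $\pi\bigl(\beta(k'\inv|_{(h\cdot w)\inv},\,\beta(h,b))\bigr)$ with $k':=h|_w k$. Invoking the cocycle law \condref{A2} for $\beta$ (every pair is composable, as $\cH$ is a group), the nested $\beta$ collapses to $\beta\bigl((k'\inv|_{(h\cdot w)\inv})\,h,\;b\bigr)$, so it suffices to prove the restriction-map identity
\[
\bigl(k'\inv|_{(h\cdot w)\inv}\bigr)\,h \;=\; k\inv|_{w\inv}\qquad\text{in }\cH.
\]
I would derive this by first rewriting $(h|_w k)\inv=k\inv\,(h|_w)\inv=k\inv\,(h\inv|_{h\cdot w})$ using Lemma~\ref{lm.ZS}, then splitting the restriction of this product over $(h\cdot w)\inv$ via \condref{ZS4}. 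The first resulting factor is $k\inv$ restricted to $(h\inv|_{h\cdot w})\cdot(h\cdot w)\inv$, which equals $w\inv$ (by Lemma~\ref{lm.ZS} together with \condref{ZS1} and \condref{ZS8}), giving $k\inv|_{w\inv}$; the second factor is $(h\inv|_{h\cdot w})|_{(h\cdot w)\inv}$, which collapses to $h\inv|_{r_\cH(h)}=h\inv$ by \condref{ZS3}, \condref{ZS5}, and the unit law \condref{ZS9}. Hence $k'\inv|_{(h\cdot w)\inv}=k\inv|_{w\inv}\,h\inv$, and right-multiplication by $h$ cancels $h\inv h=e$ to yield the claimed identity.

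This restriction-map identity is the crux of the proof and the step I expect to demand the most care, since it is precisely where the two-way interaction of $\cG$ and $\cH$ is felt and where conditions \condref{ZS1}--\condref{ZS4} and Lemma~\ref{lm.ZS} are all brought to bear; the remaining manipulations are bookkeeping. With the intertwining relation established, the ``in particular'' clause follows at once: $\Pi$ is a strict representation and $M$ a unitary representation by Proposition~\ref{prop:amplification} and Lemma~\ref{lm.unitaryM}, and because $\cU=\cG\z=\cH\z=\{e\}$, Conditions~\ref{item:qim}--\ref{item:gpd-rep} of Definition~\ref{def:cov-rep} are satisfied trivially while Equation~\eqref{eq:def:covariance} is exactly what was just proved. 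Therefore $(\Pi,M)$ is a covariant representation of $(\cB,\beta)$.
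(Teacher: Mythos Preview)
Your proposal is correct and follows essentially the same approach as the paper. The only cosmetic difference is that you verify the identity on elementary tensors $\delta_{(x,k)}\otimes\xi$ whereas the paper evaluates both sides on a general $f\in c_{00}(\cG\bowtie\cH,H)$ at a point $(x,k)$; the resulting restriction-map identity and its derivation via \condref{ZS2}--\condref{ZS4} and Lemma~\ref{lm.ZS} are the same in both.
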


\begin{proof} It only remains to check the covariance condition in Equation~\ref{eq:def:covariance}, so let $p(b)=y\in \cG$ and take any $f\in c_{00}(\cG\bowtie\cH)\otimes H $. By definition,
\begin{align*}
(M_h\Pi(b)f)(x,k)
&= (\Pi(b)f)(h\inv\cdot x, h\inv|_x k) \\
&= \pi(\beta((h\inv|_x k)\inv|_{(h\inv\cdot x)\inv},b))f(y\inv(h\inv\cdot x), h\inv|_x k),
\end{align*}
and
\begin{align*}
(\Pi(\beta(h,b))M_{h|_y}f)(x,k) 
&= \pi(\beta(k^{-1}|_{x\inv}, \beta(h,b))) (M_{h|_y}f)(p(\beta(h,b))^{-1} x,k) \\
&= \pi(\beta(k^{-1}|_{x\inv}, \beta(h,b))) f((e,h|_y)\inv (p(\beta(h,b))^{-1} x,k)).
\end{align*}
By ~\ref{cond.A2},
\[\beta(k^{-1}|_{x\inv}, \beta(h,b)) = \beta(k^{-1}|_{x\inv} h,b)).\]
On the other hand, 
\begin{align*}
(h\inv|_x k)\inv|_{(h\inv\cdot x)\inv} &=  k\inv|_{(h\inv|_x)\inv\cdot (h\inv\cdot x)\inv}(h\inv|_x)\inv|_{(h\inv\cdot x)\inv} \\
 &=  k\inv|_{(h\inv|_x)\inv\cdot (h\inv|_x \cdot x\inv)}(h|_{h\inv\cdot x})|_{(h\inv\cdot x)\inv} \\
 &= k\inv|_{x\inv} h.
\end{align*}
Therefore,
\[\pi(\beta((h\inv|_x k)\inv|_{(h\inv\cdot x)\inv},b))=\pi(\beta(k^{-1}|_{x\inv}, \beta(h,b))).\]
Now by ~\ref{cond.A1}, $p(\beta(h,b))=h\cdot p(b)=h\cdot y$. We can compute that 
\begin{align*}
(e,h|_y)\inv (p(\beta(h,b))^{-1} x,k) &= ((h|_y)\inv\cdot ((h\cdot y)^{-1}x), (h|_y)\inv|_{(h\cdot y)^{-1}x} k)\\
&= ((h|_y)\inv\cdot ((h|_y \cdot y^{-1})x), (h\inv |_{h\cdot y})|_{(h\cdot y)^{-1}x} k)\\
&= (y\inv (h|_y)\inv|_{h|_y\cdot y\inv}\cdot x, h\inv |_{x} k) \\
&= (y\inv (h\inv|_{h\cdot y})|_{(h\cdot y)^{-1}}\cdot x, h\inv |_{x} k) \\
&= (y\inv (h\inv \cdot x, h\inv |_{x} k) .
\end{align*}
Therefore, we have 
\[(M_h\Pi(b)f)(x,k)=(\Pi(\beta(h,b))M_{h|_y}f)(x,k),\]
which implies that 
  \[
        M_{h}\Pi(b) = \Pi (\beta(h,b))M_{h|_{p(b)}}
  \]
  for all $h\in\cH$ and $b\in \cB$. 
\end{proof}

\begin{theorem}\label{thm.inj} The canonical map $i\colon C^*(\cB)\to C^*(\cB\bowtie_\beta \cH)$ is an injective $*$-homomorphism. 
\end{theorem}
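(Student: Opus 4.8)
The plan is to produce a single $*$-representation $L$ of $C^*(\cB\bowtie_\beta\cH)$ for which $L\circ i$ is a \emph{faithful} $*$-representation of $C^*(\cB)$. Since $i(a)=0$ forces $L(i(a))=0$ and hence $a=0$, this suffices to conclude that $i$ is injective. To build such an $L$, I would first fix a strict representation $\pi$ of $\cB$ on a Hilbert space $H$ whose integrated form $L_\pi$ is faithful on $C^*(\cB)$; such a $\pi$ exists by disintegrating a faithful nondegenerate representation of $C^*(\cB)$ through \cite[Theorem 4.13]{MW2008}, noting that over the one-point unit space $\cU=\{e\}$ the Borel Hilbert bundle is just $H$.

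With $\pi$ in hand, the representation $L$ is assembled from the results already in place. Proposition~\ref{prop:amplification} gives the twisted amplification $\Pi$ of $\pi$ on $\widetilde H=\ell^2(\cG\bowtie\cH)\otimes H$, Lemma~\ref{lm.unitaryM} gives the unitary representation $M$ of $\cH$ on the same space, and Lemma~\ref{lm.M} shows that $(\Pi,M)$ obeys the covariance identity of Definition~\ref{def:cov-rep}. I would then let $L:=\Pi\bowtie M$ be the integrated form of this covariant pair, which is a $*$-representation of $C^*(\cB\bowtie_\beta\cH)$ by Theorem~\ref{thm:representations-of-ZS-Fellbdl}.

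The third step is an explicit identification $L\circ i=L_\Pi$. Because the unit space is the single point $e$, the quasi-invariant measure is a point mass and the modular function is identically $1$, while $M_e=\mathrm{id}$; moreover, by the definition of $i$ in Lemma~\ref{lem:blendG}, the $\cB$-component of $i(\sigma)(x,h)$ vanishes unless $h=e$. Substituting $i(\sigma)$ into the integration formula of Theorem~\ref{thm:representations-of-ZS-Fellbdl} therefore collapses the sum over $\cG\bowtie\cH$ to $\sum_{x\in\cG}\Pi(\sigma(x))$, which is precisely the integrated form $L_\Pi$ of the strict representation $\Pi$.

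The heart of the argument --- and the step I expect to be the main obstacle --- is verifying that $L_\Pi$ is faithful on $C^*(\cB)$. This is the Fell-bundle counterpart of the classical fact that the regular covariant representation of a $C^*$-dynamical system restricts faithfully to the coefficient algebra (cf.\ \cite[Example 2.14]{Wil2007}). The idea is that $\Pi$ is built entirely from the faithful $\pi$ by means of the \emph{isometric} isomorphisms $\beta_k$ of Corollary~\ref{cor.iso} together with an amplification, so that $\Pi$ should carry an isometric copy of $\pi$ and consequently $\ker L_\Pi\subseteq\ker L_\pi=\{0\}$. Controlling the universal norm under this amplification is the delicate point, since the amplification spreads $\pi$ across the $\cG$-direction; once this faithfulness is secured, the chain $\ker i\subseteq\ker(L\circ i)=\ker L_\Pi=\{0\}$ yields the injectivity of $i$, completing the proof.
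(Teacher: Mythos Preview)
Your overall strategy---build the twisted amplification $\Pi$ of a faithful $\pi$, pair it with the unitary $M$, integrate to $L=\Pi\bowtie M$, and identify $L\circ i=L_\Pi$---is exactly the paper's, and your steps through the identification $L\circ i=L_\Pi$ are correct. The divergence is at your step~5, and the gap there is genuine: $L_\Pi$ is \emph{not} faithful on $C^*(\cB)$ in general, so the inclusion $\ker L_\Pi\subseteq\ker L_\pi$ that you hope for can fail. To see this concretely, take $\cH=\{e\}$ and $\cB$ the trivial line bundle over a non-amenable $\cG$, with $\pi$ disintegrated from a faithful representation of $C^*(\cG)$ (so $\pi(1,x)=u_x$ for a unitary representation $u$). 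Then $\Pi(z,y)(\delta_x\otimes\xi)=z\,\delta_{yx}\otimes u_y\xi$, so $L_\Pi$ is the tensor representation $\lambda\otimes u$ on $\ell^2(\cG)\otimes H$; by Fell's absorption principle this is unitarily equivalent to an amplification of the left regular representation and therefore computes only the \emph{reduced} $C^*$-norm. Hence $\ker L_\Pi\supsetneq\{0\}=\ker L_\pi$. The phenomenon you flagged---that the amplification ``spreads $\pi$ across the $\cG$-direction''---is exactly what kills faithfulness.

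The paper never claims $L_\Pi$ is faithful. Instead it compresses $L(i(\tau))$ to the single copy $\delta_{(e,e)}\otimes H\subset\widetilde H$ and computes
\[
\bigl\langle L(i(\tau))(\delta_{(e,e)}\otimes\xi),\,\delta_{(e,e)}\otimes\eta\bigr\rangle=\langle\pi^u(\tau(e))\,\xi,\eta\rangle,
\]
using that $\Pi(\tau(x))(\delta_{(e,e)}\otimes\xi)=\delta_{(x,e)}\otimes\pi^u(\tau(x))\xi$ lands back in the $(e,e)$-slot only when $x=e$. One then takes $\tau=\sigma\square\sigma^*$, so that $\tau(e)=\sum_{x\in\cG}\sigma(x)\sigma(x)^*\in\cB_e$; faithfulness of $\pi^u$ on the $C^*$-algebra fibre $\cB_e$ forces each $\sigma(x)=0$. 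The key idea you are missing is this positivity trick: the fibre-evaluation $\tau\mapsto\tau(e)$ is not injective as a linear map, but it \emph{is} faithful on positive elements of $\Gamma_c(\cG;\cB)$, and that is enough to conclude $\sigma=0$ from $i(\sigma)=0$ without ever controlling the full universal norm of $L_\Pi$.
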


\begin{proof} Denote $\cK=\cG\bowtie\cH$ and $\cC=\cB\bowtie_\beta \cH$.  Let $\pi^{\mathrm{u}}\colon \cB\to C^*(\cB)$ be the universal $*$-representation of $\cB$, where $C^*(\cB)$ is understood as a concrete $C^*$-algebra inside some $\mathbb{B}(H)$. Let $\Pi^u\colon\cB\to\mathbb{B}(\ell^2(\cG\bowtie\cH)\otimes H )$ be its twisted amplification. 

By Lemma \ref{lm.M}, we can construct a unitary representation $M$ of $\cH$ such that $(\Pi^u, M)$ is a covariant representation of $(\cB,\beta)$. Let $L_{\Pi^u,M}$ be its integrated form, as constructed in Theorem~\ref{thm:representations-of-ZS-Fellbdl}. 

Let $i\colon C^*(\cB)\to C^*(\cB\bowtie_\beta\cH)$ be the map from Proposition~\ref{prop.blendG}.  Assume that $i(\sigma)=0$ for some $\sigma\in \Gamma_c(\cG, \cB)$; we have to show that $\sigma=0$. So let $\tau=\sigma\square \sigma^*$ and note that 
$i(\tau)=i(\sigma)i(\sigma^*)=0$, since $i$ is a $*$-homomorphism. As
\[\norm{i(\tau)}=\sup\{\norm{L(i(\tau))}: I\text{-norm decreasing }*\text{-rep.\ }L\text{ of }\Gamma_c(\cK; \cC)\},\]
it follows that $L_{\Pi^u,M}(i(\tau))=0$, since $L_{\Pi^u,M}$ is an $I$-norm decreasing $*$- representation
of $\Gamma_c(\cK; \cC)$. 

For any $\xi\in H$, let $f_\xi:= \delta_{(e,e)}\otimes \xi \in \ell^2(\cK,H)$.
For $\eta\in H$, we have
\[
\inner{ L_{\Pi^u,M}(i(\tau)) f_\xi}{ f_\eta} =
\inner{ \sum_{(x,h)\in \cG\bowtie \cH} \Pi^u(i(\tau)_\cB(x,h)) M_h f_\xi}{ f_\eta}
.
\]
Since $i(\tau)(x,h)=0$ if $h\neq e$ and $i(\tau)(x,h)=(\tau(x),h)$ if $h=e$, we have that
\begin{align*}
\inner{ L_{\Pi^u,M}(i(\tau)) f_\xi}{f_\eta}&= \inner{\sum_{x\in \cG} \Pi^u(\tau(x)) f_\xi}{ f_\eta}\\
&=  \sum_{x\in \cG} \sum_{(y,h)\in\cG\bowtie\cH} \inner{ (\Pi^u(\tau(x)) f_\xi)(y,h)}{ f_\eta(y,h)} \\
&\overset{(*)}{=} \sum_{x\in \cG} \inner{ (\Pi^u(\tau(x)) f_\xi)(e,e)}{ \eta} \\
&= \sum_{x\in \cG} \inner{\pi^u(\tau(x))f_\xi(x\inv,e)}{\eta}.
\end{align*}
Here, $(*)$ follows since $f_\eta(y,h)=0$ for $(y,h)=(e,e)$ and $f_\eta(e,e)=\eta$. Similarly, $f_\xi(x\inv,e)=0$ for $x\neq e$ and $f_\xi(e,e)=\xi$, so we have that 
\[ \inner{\pi^u(\tau(e))\xi}{ \eta}=\inner{ L_{\Pi^u,M}(i(\tau)) f_\xi}{ f_\eta} =0.\]

Since $\pi^u$ is the universal representation, $\pi^u$ is faithful on $\cB$ and thus we have $\tau(e)=0$.  One can compute that $\tau(e)=\sum_{x\in \cG} \sigma(x) \sigma(x)^*$.  This implies that 
\[\tau(e)=\sum_{x\in \cG} \sigma(x) \sigma(x)^*=0,\]
and thus $\sigma(x)=0$ for all $x\in \cG$ and $\sigma=0$. Therefore, $i$ is injective as desired. 
\end{proof}

\bibliographystyle{abbrv}

\end{document}